\newtheorem{thm}{Theorem}[section]
\newtheorem{lem}[thm]{Lemma}
\newtheorem{prop}[thm]{Proposition}
\newtheorem{defn}[thm]{Definition}
\theoremstyle{definition}
\newtheorem{rmk}{Remark}
\newtheorem{exa}{Example}
\numberwithin{equation}{section}
\newcommand{\<}{\langle}
\renewcommand{\>}{\rangle}
\DeclareMathOperator{\supp}{supp}
\DeclareMathOperator{\dd}{div}
\DeclareMathOperator{\Ker}{Ker}
\newcommand{\pp}{\partial}
\newcommand{\M}{\mathcal{M}}
\newcommand{\W}{\mathcal{W}}
\newcommand{\D}{\mathcal{D}}
\newcommand{\vae}{\varepsilon}
\newcommand{\vap}{\varphi}
\newcommand{\la}{\lambda}
\newcommand{\ue}{u^{\varepsilon}}
\newcommand{\un}{u^{0}}
\newcommand{\wto}{\rightharpoonup}
\newcommand{\per}{\mathrm{per}}
\newcommand{\utext}[1]{\text{\upshape #1}}
\title[]{%
Homogenization and
Inverse Problems 
for Fractional Diffusion Equations 
}
\author[A.~Kawamoto]{Atsushi Kawamoto%
${}^1$%
}
\address{%
${}^1$%
Faculty of Engineering, Takushoku University, 
Tatemachi, Hachioji, Tokyo 193-0985, Japan%
}
\email{%
${}^1$%
akawamot@la.takushoku-u.ac.jp%
}
\author[M.~Machida]{Manabu Machida%
${}^2$%
}
\address{%
${}^2$%
Institute for Medical Photonics Research, 
Hamamatsu University School of Medicine,
Hamamatsu, Shizuoka 431-3192, Japan%
}%
\email{%
${}^2$%
machida@hama-med.ac.jp%
}
\author[M.~Yamamoto]{Masahiro Yamamoto%
${}^{3,4,5}$%
}
\address{%
${}^3$%
Graduate School of Mathematical Sciences, 
The University of Tokyo, 
Komaba, Meguro, Tokyo 153-8914, Japan%
}%
\address{%
${}^4$%
Honorary Member of Academy of Romanian Scientists, 
Ilfov, nr. 3, Bucuresti, Romania \\
}%
\address{%
${}^5$%
Correspondence member of Accademia Peloritana dei Pericolanti
Palazzo Universit\`a, Piazza S. Pugliatti 1 98122 Messina Italy
}%
\email{%
${}^{3,4,5}$%
myama@ms.u-tokyo.ac.jp%
}
\date{}
\begin{document}
%WWWWWWWWWWWWWWWWWWWWWWWWWWWWWWWWWWWWWWWWWWWWWWWWWW
%
%	Abstract
%
%WWWWWWWWWWWWWWWWWWWWWWWWWWWWWWWWWWWWWWWWWWWWWWWWWW
\begin{abstract}
We consider the homogenization for time-fractional diffusion equations in a periodic structure and 
we derive the homogenized time-fractional diffusion equation. 
Then we discuss the determination of the constant diffusion coefficient by minimum data. 
Combining the results obtained above, we investigate the inverse problems of 
determining the diffusion coefficient 
for the homogenized equations from the data in the periodic structure and vice versa, that is, 
we investigate 
the inverse problem of 
determining the diffusion coefficient 
for the periodic equations from the data in the homogenized structure.
\end{abstract}
%WWWWWWWWWWWWWWWWWWWWWWWWWWWWWWWWWWWWWWWWWWWWWWWWWW

\maketitle

%\,\hfill last update: \today, version: \jobname

%SSSSSSSSSSSSSSSSSSSSSSSSSSSSSSSSSSSSSSSSSSSSSSSSSS
%
%	Introduction
%

%SSSSSSSSSSSSSSSSSSSSSSSSSSSSSSSSSSSSSSSSSSSSSSSSSS
\section{Introduction}

Let $0<\alpha<1$, $N\in\{1,2,3\}$ and $T>0$.
Let $\Omega\subset\mathbb{R}^N$ be a bounded domain with the boundary $\pp\Omega$ of $C^2$-class. 
We consider the following time-fractional diffusion equation 
in a periodic structure
%in the microscopic structure
with the parameter $\vae>0$, 
the zero Dirichlet boundary condition
and 
the initial condition. 
\begin{equation}
\label{eq:101}
\left\{
\begin{aligned}
&
\pp_t^\alpha \ue(x,t)
-
\dd (A^\vae(x)\nabla \ue(x,t))
=
f^\vae(x,t),
&(x,t)\in \Omega\times(0,T)
,\\&
\ue(x,t)=0,
&(x,t)\in\pp\Omega\times(0,T)
,\\&
\ue(x,0)=\ue_0(x),
&x\in\Omega
,
\end{aligned}
\right.
\end{equation}
where $A^\vae(x)=(a_{ij}^\vae(x))_{i,j=1,\ldots,N}$ is 
the symmetric matrix valued function 
which satisfies appropriate conditions. 
The functions $u_0^\vae$ and $f^\vae$ will be defined later.  
Here $\pp_t^\alpha$ is 
the fractional derivative of Caputo type. 

First, 
we consider the homogenization %with periodic structures 
for the initial boundary value problem for the time-fractional diffusion equation in \S3.1. 
To discuss the homogenization,
we use the variational formulation of the above problem \eqref{eq:101} in $L^2$-sense 
introduced by 
Kubica, Ryszewska and Yamamoto \cite{KRY2020}. 
%\eqref{eq:101}. 
%% begin Machida %%
%In this paper, we call two equations connected by homogenization the microscopic equation and macroscopic equation, respectively. That is, 
%By taking the limit of $\vae$, we will derive the time-fractional diffusion equation \eqref{eq:101} which describes anomalous diffusion at a larger scale from the time-fractional diffusion equation \eqref{eq:101} which governs anomalous diffusion at a smaller scale. To
%%%%%%%%%%%%%%%%%%%
%That is, 
We consider the problem \eqref{eq:101} as a problem in the periodic structure. 
Then we may obtain the following homogenized 
problem.  
\begin{equation}
\label{eq:102}
\left\{
\begin{aligned}
&
\pp_t^\alpha \un(x,t)
-
\dd (A^0\nabla \un(x,t))
=
f(x,t),
&(x,t)\in\Omega\times(0,T)
,\\&
\un(x,t)=0,
&(x,t)\in\pp\Omega\times(0,T)
,\\&
\un(x,0)=u_0(x),
&x\in\Omega
,
\end{aligned}
\right.
\end{equation}
where $A^0$ is the homogenized matrix. 
The functions $u_0$ and $f$ are the limits of $\{u_0^\vae\}$ and $\{f^\vae\}$ 
in
appropriate senses. 
As a previous research 
on the homogenization for time-fractional diffusion equation, 
we may refer to the article \cite{JL2020}, 
in which the homogenized equations \eqref{eq:102} are derived by 
the multiple scale method via the formal calculation. 
In this article, we discuss the homogenization by a rigorous argument. 
We establish the homogenization result 
by the oscillating test function method. 
As for the monograph on the homogenization, we may refer to Cioranescu and Donato \cite{CD1999} 
and references therein. 

Second, we investigate the inverse problem 
of determining the coefficient 
for the homogenized equations in \S3.2.
We begin with the case where the diffusion coefficient is a scaler. 
Then for $N\in\{2,3\}$, we consider layered materials
%%%%%%%%%%%%%%%%%%%
%For simplicity, we will consider 
%the one-dimensional case in space 
%and the case of the layered material
%%% end Machida %%%
in which 
the diffusion coefficient depends on 
%% begin Machida %%
a single spatial
%%%%%%%%%%%%%%%%%%%
%the single space 
%%% end Machida %%%
variable. 

%--------------------------------------------------
%
%	Motivation
%
%--------------------------------------------------
%In engineering, 
%the homogenization method 
%%% begin Machida %%
%has been mainly used
%%%%%%%%%%%%%%%%%%%%
%%is used
%%%% end Machida %%%
%to derive macroscopic structures from microscopic structures in order to study the global behavior of media with microscopic structures
%%% begin Machida %%
%\cite{Bendsoe-Kikuchi88}%
%%%%%%%%%%%%%%%%%%%%
%%
%%%% end Machida %%%
%.
%% begin Machida %%
%Although the use of homogenization for the study of macroscopic behavior is natural, in this paper, we consider the inverse problem of finding microscopic properties from macroscopic observation. We 
%%%%%%%%%%%%%%%%%%%
%On the other hand,
%there is a natural interest 
%to clarify the microscopic structure from the macroscopic structure. 
%In this article, we
%%% end Machida %%%
Third, we propose new inverse problems 
between different structures
by combining the method to solve inverse problems with the homogenization in \S3.3. 
The method to solve inverse problems gives 
the uniqueness and the stability in 
the determination of coefficient from observation data in the same structures. 
Meanwhile, the homogenization yields 
homogenized equations from periodic equations. 
Using the knowledge of the above two issues, 
we may consider the inverse problems between the periodic structure and homogenized structure. 
That is, we investigate 
the inverse problems of 
determining the diffusion coefficient 
for the homogenized equations from the data in the periodic structure and  
the inverse problem of 
determining the diffusion coefficient 
for the periodic equations from the data in the homogenized structure.

%%% begin Machida %%
%Indeed, a method to determine microscopic properties of fibers from macroscopic properties of composites has been proposed \cite{RGSK05a,RGSK05b}.
%%%%%%%%%%%%%%%%%%%
%
%%% end Machida %%%

%--------------------------------------------------
%
% 	Method
%
%--------------------------------------------------
%% begin Machida %%
%The strategy to prove the uniqueness and stability for our hierarchical inverse problem is as follows.
%%%%%%%%%%%%%%%%%%%
%Our strategy to prove the inverse problem between different structures is simple. 
%%% end Machida %%%
%First of all, we establish the relation 
%between the homogenized coefficient 
%and 
%the periodic coefficient.  
%Next, we consider the inverse problems 
%for homogenized time-fractional diffusion equations.
%Finally, 
%we establish the uniqueness and the stability in 
%inverse problems between different structures.

In section 2, 
we prepare the known results and tools on the 
well-posedness for time-fractional diffusion equations. We also prepare 
the basic tools of 
the homogenization with periodic structures. 
In section 3, 
we state our main results. 
We start with 
the homogenization result for time-fractional diffusion equations. 
We discuss the determination of the constant diffusion coefficient. 
Then, we state the uniqueness and stability result in our inverse problem between different structures. 
In section 4, 
We prove our main results. 

%SSSSSSSSSSSSSSSSSSSSSSSSSSSSSSSSSSSSSSSSSSSSSSSSSS
%
%	Preliminaries on Homogenization
%
%SSSSSSSSSSSSSSSSSSSSSSSSSSSSSSSSSSSSSSSSSSSSSSSSSS
\section{Preliminaries}

In this section, 
to make the content of this article self-contained, 
we prepare some tools and state previous results 
which we use to prove our main results. 
However, we will keep our preparation to a minimum 
to shorten this article and leave much of the contents to the monographs and papers.
We will cite the monographs and papers in each subsection as appropriate.

%ssssssssssssssssssssssssssssssssssssssssssssssssss	
%
%	On the well-posedness for FDEs
%
%ssssssssssssssssssssssssssssssssssssssssssssssssss	
\subsection{On the Well-posedness for Fractional Diffusion Equations}

We state the previous results 
on the well-posedness 
for time-fractional diffusion equations
by the $L^2$-framework. 
We use these results to prove the our main theorem on the homogenization. 
For more details, 
We refer to the monograph \cite{KRY2020} by 
Kubica, Ryszewska and Yamamoto.

%--------------------------------------------------
%
% 	Function Spaces
%
%--------------------------------------------------
\subsubsection{Function Spaces and the General Fractional Derivative of Caputo Type}

In the classical sense, 
the Caputo type fractional derivative is defined by
\begin{equation*}
d_t^\alpha v (t)
=
\frac{1}{\Gamma(1-\alpha)}
\int_0^t
(t-\tau)^{-\alpha}
\frac{dv}{d\tau}(\tau)
\,d\tau,
\quad 0\leq t\leq T
\end{equation*}
for $v\in W^{1,1}(0,T)$, 
where $\Gamma$ denotes the gamma function. 
In order to state the well-posedness result 
for the time-fractional diffusion equations by $L^2$-framework, 
we require that the above fractional derivative should be included in $L^2$ space. 
However, 
the $W^{1,1}$ space is small for us, 
since $d_t^\alpha v\in L^1(0,T)$ if $v\in W^{1,1}(0,T)$ in the above definition of $d_t^\alpha$. 
For this purpose, 
we redefine 
the general fractional derivative of Caputo type 
in an appropriate function space.

First, we define the function space $H_\alpha(0,T)$ 
which will be the domain of general fractional differential operator $\pp_t^\alpha$ of Caputo type. 
We know the Sobolev-Slobodecki space $H^\alpha(0,T)$ with the norm (see e.g., Adams \cite{A1975}):
\begin{equation*}
\|u\|_{H^\alpha(0,T)}
=
\left(
\|u\|_{L^2(0,T)}^2
+
\int_0^T\int_0^T
\frac{|u(t)-u(s)|^2}{|t-s|^{1+2\alpha}}
\,ds\,dt
\right)^{\frac12}
.
\end{equation*}
Using this function space $H^\alpha(0,T)$, 
we define the Banach space $H_\alpha(0,T)$ by 
\begin{equation*}
H_{\alpha}(0,T)
=
\left\{
\begin{aligned}
&
H^\alpha(0,T),
&
0\leq \alpha<\frac12,\\
&
\left\{
u\in H^{\frac12}(0,T)\,\middle|\,
\int_0^T
\frac{|u(t)|^2}{t}\,dt<\infty
\right\},
&
\alpha=\frac12,\\
&
\{u\in H^\alpha(0,T)
\mid
u(0)=0
\},
&
\frac12<\alpha\leq 1
\end{aligned}
\right.
\end{equation*}
with the norm
\begin{equation*}
\|u\|_{H_\alpha(0,T)}
=
\left\{
\begin{aligned}
&
\|u\|_{H^\alpha(0,T)},
&0\leq\alpha\leq 1,\,\alpha\neq\frac12,\\
&
\left(
\|u\|_{H^{\frac12}(0,T)}^2
+
\int_0^T
\frac{|u(t)|^2}{t}\,dt
\right)^{\frac12},
&
\alpha=\frac12
.
\end{aligned}
\right.
\end{equation*}

Let us define the general fractional derivative of Caputo type. 
To do this, 
we introduce the Riemann-Liouville fractional integral operator
\begin{equation*}
J^\beta v(t)
=
\frac{1}{\Gamma(\beta)}\int_0^t (t-s)^{\beta-1} v(s)\,ds
,\quad
v\in L^2(0,T)
\end{equation*}
for $\beta>0$. 
Then $J^\alpha:L^2(0,T)\to H_\alpha(0,T)$ is homeomorphic 
(see Theorem 2.1 in \cite{KRY2020} 
and
Theorem 2.1 in \cite{GLY2015}%
) 
and 
this operator $J^\alpha$ 
coincides with the $\alpha$-th fractional power of the following operator $J$ defined by 
\begin{equation*}
J v(t)=
\int_0^t v(s)\,ds
,\quad
v\in L^2(0,T). 
\end{equation*}
See Lemma 2.4 in \cite{KRY2020}.
Now we ready to state the definition of general fractional derivative of Caputo type. 
\begin{defn}[General Fractional Derivative of Caputo Type]
\label{defn:2101}
We define $\pp_t^\alpha$ by 
$\pp_t^\alpha:=(J^\alpha)^{-1}$ with the domain 
$\D(\pp_t^\alpha)=H_\alpha(0,T)$.
\end{defn}
Then we see that $\pp_t^\alpha:H_\alpha(0,T)\to L^2(0,T)$ is homeomorphic 
and the following norm equivalence holds:
\begin{equation}
\label{eq:2101}
C^{-1}
\|v\|_{H_\alpha(0,T)}
\leq
\|\pp_t^\alpha v\|_{L^2(0,T)}
\leq
C
\|v\|_{H_\alpha(0,T)}
,\quad v\in H_\alpha(0,T)
\end{equation}
with the constant $C>0$ depending on only $\alpha$. 
Moreover we have
\begin{equation}
\label{eq:2102}
\pp_t^\alpha v
=
\frac{d}{dt}(J^{1-\alpha} v)
,\quad
v\in H_\alpha(0,T)
.
\end{equation}
As for these results on the general fractional derivative of Caputo type, 
we may refer to Theorem 2.4 in \cite{KRY2020}.

%--------------------------------------------------
%
% 	Well-posedness Result
%
%--------------------------------------------------
\subsubsection{Well-posedness Result}

Let $\nu,\mu\in\mathbb{R}$ satisfy $0<\nu<\mu$.  
We denote by $M_S(\nu,\mu,\Omega)$ 
the set of the $N\times N$ matrix 
$A(x)=(a_{ij}(x))_{i,j=1,\ldots,N}$ satisfying 
the following conditions:
for any $\xi\in\mathbb{R}^N$,
\begin{equation*}
\left\{
\begin{aligned}
&\text{i)}
&&a_{ij}\in L^\infty(\Omega)
,\\
&\text{ii)}
&&\text{$A$ is symmetric, that is, $a_{ij}=a_{ji}$}
,\\
&\text{iii)}
&&(A(x)\xi,\xi)\geq \nu|\xi|^2,\quad x\in\overline{\Omega}
,\\
&\text{iv)}
&&|A(x)\xi|\leq \mu |\xi|,\quad x\in\overline{\Omega}
.
\end{aligned}
\right.
\end{equation*}

Let 
$A\in M_S(\nu,\mu,\Omega)$, 
$f\in L^2(0,T;H^{-1}(\Omega))$, 
$u_0\in L^2(\Omega)$. 
We consider the following initial boundary value problem. 
\begin{equation}
\label{eq:S2_P}
\left\{
\begin{aligned}
&
\pp_t^\alpha u(x,t)
-
\dd (A(x)\nabla u(x,t))
=
f(x,t),
&(x,t)\in\Omega\times(0,T)
,\\&
u(x,t)=0,
&(x,t)\in\pp\Omega\times(0,T)
,\\&
u(x,0)=u_0(x),
&x\in\Omega
.
\end{aligned}
\right.
\end{equation}
We define the function space
\begin{equation*}
\W^\alpha(u_0)
=
\{
v\mid
v\in L^2(0,T;H_0^1(\Omega)), 
v-u_0\in H_\alpha(0,T;H^{-1}(\Omega))
\}
\end{equation*}
with the norm
\begin{equation*}
\|v\|_{\W^\alpha(u_0)}
=
\|v\|_{L^2(0,T;H_0^1(\Omega))}
+
\|v-u_0\|_{H_\alpha(0,T;H^{-1}(\Omega))}
.
\end{equation*}
We investigate 
the weak solution of the problem \eqref{eq:S2_P} 
which is obtained by the following 
variational formulation
\begin{equation}
\label{eq:S2_wP}
\left\{
\begin{aligned}
&
\text{Find $u\in \W^\alpha(u_0)$ such that}\\
&
\left\<
\pp_t^\alpha (u-u_0)
,
\Phi
\right\>_{H^{-1}(\Omega),H_0^1(\Omega)}
+
\left(
A\nabla u,
\nabla \Phi
\right)_{L^2(\Omega)}
\\&
=
\left\<
f
,
\Phi
\right\>_{H^{-1}(\Omega),H_0^1(\Omega)}
\quad
\text{for $\Phi\in H_0^1(\Omega)$}
,\quad
\text{a.e. $t\in (0,T)$}
.
\end{aligned}
\right.
\end{equation}
In this formulation, 
we note that $u-u_0\in H_\alpha(0,T;H^{-1}(\Omega))$ 
contains the initial condition for $\frac12<\alpha<1$. 
Indeed, we see that 
$u-u_0\in C([0,T];H^{-1}(\Omega))$
by $H^\alpha(0,T)\subset C[0,T]$. 
Then we have 
$u(x,0)=u_0(x)$ 
in the sense of 
\begin{equation*}
\lim_{t\to 0} u(\cdot, t)=u_0
\ \text{in $H^{-1}(\Omega)$}
.
\end{equation*}

\begin{thm}
\label{thm:well-posedness_FDE}
Assume that 
$A\in M_S(\nu,\mu,\Omega)$, 
$f\in L^2(0,T;H^{-1}(\Omega))$, 
$u_0\in L^2(\Omega)$. 
Then there exists a unique weak solution of the problem \eqref{eq:S2_P}. 
Furthermore, we have the following estimate. 
\begin{equation}
\label{eq:estimate01}
\|u\|_{\W^\alpha(u_0)}
\leq
C
\left(
\|u_0\|_{L^2(\Omega)}
+
\|f\|_{L^2(0,T;H^{-1}(\Omega))}
\right)
.
\end{equation}
\end{thm}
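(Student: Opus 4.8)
The plan is to combine a spatial Galerkin (spectral) discretization with the functional calculus for $\pp_t^\alpha$ furnished by Definition \ref{defn:2101}. First I would observe that, since $A\in M_S(\nu,\mu,\Omega)$, the bilinear form $a(u,v)=(A\nabla u,\nabla v)_{L^2(\Omega)}$ is bounded and coercive on $H_0^1(\Omega)$ by conditions iii) and iv); hence the associated Dirichlet operator $-\dd(A\nabla\cdot)$ is self-adjoint and positive on $L^2(\Omega)$, and by the compact embedding $H_0^1(\Omega)\hookrightarrow L^2(\Omega)$ it has compact resolvent. This produces an orthonormal basis $\{\phi_k\}_{k\in\mathbb{N}}$ of $L^2(\Omega)$ with eigenvalues $0<\la_1\leq\la_2\leq\cdots\to\infty$. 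Expanding $u_0=\sum_k u_{0,k}\phi_k$ and $f(\cdot,t)=\sum_k f_k(t)\phi_k$, I would seek the approximations $u^{(n)}(x,t)=\sum_{k=1}^n c_k(t)\phi_k(x)$.

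Second, projecting the variational identity \eqref{eq:S2_wP} onto $\mathrm{span}\{\phi_1,\ldots,\phi_n\}$ decouples the system, because the basis diagonalizes $a(\cdot,\cdot)$, into the scalar fractional ODEs
\begin{equation*}
\pp_t^\alpha(c_k-u_{0,k})+\la_k c_k=f_k(t),\qquad k=1,\ldots,n.
\end{equation*}
Each is uniquely solvable in $H_\alpha(0,T)$: since $\pp_t^\alpha=(J^\alpha)^{-1}$ is a homeomorphism onto $L^2(0,T)$ and $\la_k>0$, the operator $\pp_t^\alpha+\la_k$ is boundedly invertible, its solution being given explicitly through a Mittag-Leffler function. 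This yields $u^{(n)}\in\W^\alpha(u_0^{(n)})$ solving the finite-dimensional problem, where $u_0^{(n)}$ is the orthogonal projection of $u_0$; note that $u_0^{(n)}\in H_0^1(\Omega)$, so that $w^{(n)}:=u^{(n)}-u_0^{(n)}$ lies in $H_0^1(\Omega)$ and the duality pairing below is legitimate at the Galerkin level.

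The hard part is the a priori estimate uniform in $n$. I would test the Galerkin equation with $u^{(n)}$ (equivalently, multiply the $k$-th ODE by $c_k$ and sum), integrate over $(0,T)$, and write $u^{(n)}=w^{(n)}+u_0^{(n)}$. Coercivity of $A$ gives $\int_0^T a(u^{(n)},u^{(n)})\,dt\geq\nu\|u^{(n)}\|^2_{L^2(0,T;H_0^1(\Omega))}$, while the fractional term is handled by the positivity inequality $\int_0^T\<\pp_t^\alpha w^{(n)},w^{(n)}\>_{H^{-1}(\Omega),H_0^1(\Omega)}\,dt\geq0$, valid for $w^{(n)}\in H_\alpha(0,T;H^{-1}(\Omega))$; the terms involving $u_0^{(n)}$ are moved to the right-hand side. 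This positivity is the genuinely fractional ingredient: unlike the classical case there is no pointwise chain rule, so I would rely on the accretivity of $J^\alpha$ together with the norm equivalence \eqref{eq:2101} to convert the resulting bound into control of $\|w^{(n)}\|_{H_\alpha(0,T;H^{-1}(\Omega))}$. Young's inequality then absorbs the pairing $\<f,u^{(n)}\>$, delivering \eqref{eq:estimate01} with a constant independent of $n$.

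Finally, with uniform bounds in the reflexive spaces $L^2(0,T;H_0^1(\Omega))$ and $H_\alpha(0,T;H^{-1}(\Omega))$, I would extract a weakly convergent subsequence and pass to the limit in the variational identity, using the density of $\bigcup_n\mathrm{span}\{\phi_1,\ldots,\phi_n\}$ in $H_0^1(\Omega)$ to recover \eqref{eq:S2_wP} for every test function $\Phi$; weak lower semicontinuity of the norms transfers \eqref{eq:estimate01} to the limit $u$. Uniqueness follows from linearity: setting $u_0=0$ and $f=0$ and rerunning the same a priori estimate forces $\|u\|_{\W^\alpha(0)}=0$. The main obstacle throughout is the coercivity of the fractional time derivative, which I expect to be the step demanding the most care and the invocation of the fractional-integration tools of \cite{KRY2020}.
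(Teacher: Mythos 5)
The paper itself does not prove this theorem; it is quoted from Kubica--Yamamoto \cite{KY2018} (see also \cite{KRY2020}), and your overall strategy --- Galerkin approximation, a uniform energy estimate whose only genuinely fractional ingredient is a coercivity property of $\pp_t^\alpha$, weak compactness, and uniqueness by rerunning the estimate --- is indeed the route taken there. Diagonalizing with the eigenfunctions of $-\dd(A\nabla\cdot)$ itself (legitimate here because $A$ is time-independent, whereas \cite{KY2018} must use the Dirichlet Laplacian eigenfunctions) and solving each mode by Mittag--Leffler functions is a reasonable variant.

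There is, however, a genuine gap in the step you yourself flag as the hard one. You split the fractional term as $\<\pp_t^\alpha w^{(n)},u^{(n)}\>=\<\pp_t^\alpha w^{(n)},w^{(n)}\>+\<\pp_t^\alpha w^{(n)},u_0^{(n)}\>$, keep the first by positivity, and ``move the terms involving $u_0^{(n)}$ to the right-hand side.'' For $u_0$ merely in $L^2(\Omega)$ that cross term cannot be closed: estimating $\int_0^T\<\pp_t^\alpha w^{(n)},u_0^{(n)}\>\,dt$ through the $H^{-1}(\Omega)$--$H^1_0(\Omega)$ duality costs $\|u_0^{(n)}\|_{H_0^1(\Omega)}$, which blows up as $n\to\infty$ when $u_0\notin H^1_0(\Omega)$, while estimating it via the $L^2(\Omega)$ inner product costs $\|\pp_t^\alpha w^{(n)}\|_{L^2(0,T;L^2(\Omega))}$, which is not among the quantities your estimate controls. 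The inequality that actually closes the argument is the per-mode Caputo coercivity $c_k(t)\,\pp_t^\alpha\bigl(c_k-c_k(0)\bigr)(t)\geq\tfrac12\,\pp_t^\alpha\bigl(c_k^2-c_k(0)^2\bigr)(t)$, which after integration over $(0,T)$ and summation over $k$ gives
\begin{equation*}
\int_0^T \bigl\<\pp_t^\alpha(u^{(n)}-u_0^{(n)}),u^{(n)}\bigr\>\,dt
\;\geq\;
\tfrac12\, J^{1-\alpha}\bigl(\|u^{(n)}(\cdot,t)\|^2_{L^2(\Omega)}\bigr)(T)
-\frac{T^{1-\alpha}}{2\Gamma(2-\alpha)}\,\|u_0^{(n)}\|^2_{L^2(\Omega)},
\end{equation*}
so that the initial datum enters only through its $L^2(\Omega)$ norm. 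This is the key lemma of \cite{KY2018} and \cite{KRY2020} that your positivity statement $\int_0^T\<\pp_t^\alpha w^{(n)},w^{(n)}\>\,dt\geq0$ does not replace. A smaller point: for uniqueness you must run the energy estimate on an \emph{arbitrary} weak solution, i.e.\ justify testing \eqref{eq:S2_wP} with $u$ itself and the validity of $\int_0^T\<\pp_t^\alpha u,u\>\,dt\geq0$ for the pairing of $H_\alpha(0,T;H^{-1}(\Omega))$ with $L^2(0,T;H^1_0(\Omega))$; with $u_0=0$ there is no cross term, so this part does go through after a density argument, but it is not literally a ``rerun'' of the Galerkin estimate.
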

On the proof of this theorem, we may refer to 
Kubica and Yamamoto \cite{KY2018}. 
If we assume that $A$ is smooth enough, 
we may prove this theorem more easily
(see the monograph \cite{KRY2020}). 

Related to the function space $\W^\alpha(u_0)$, 
we will prove the following lemma. 
This is a direct consequence of the results by Simon \cite{S1986}. 
\begin{lem}\label{lem:2101}
We consider the Banach space
\begin{equation*}
\mathrm{W}^\alpha
=
\{
v\mid
v\in L^2(0,T;H_0^1(\Omega)), 
v\in H^\alpha(0,T;H^{-1}(\Omega))
\}
\end{equation*}
with the norm 
\begin{equation*}
\|v\|_{\mathrm{W}^\alpha}
=
\|v\|_{L^2(0,T;H_0^1(\Omega))}
+
\|v\|_{H^\alpha(0,T;H^{-1}(\Omega))}
.
\end{equation*}
Then the embedding
\begin{equation*}
\mathrm{W}^\alpha\hookrightarrow L^2(0,T;L^2(\Omega))
\end{equation*}
is the compact embedding. 
\end{lem}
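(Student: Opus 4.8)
The plan is to recast $\mathrm{W}^\alpha$ inside Simon's abstract three-space compactness framework and then invoke his theorem directly. Set
\[
X=H_0^1(\Omega),\qquad B=L^2(\Omega),\qquad Y=H^{-1}(\Omega),
\]
so that we have the Gelfand triple $X\hookrightarrow B\hookrightarrow Y$ in which the first embedding is \emph{compact} by the Rellich--Kondrachov theorem (recall $\Omega$ is bounded with $C^2$-boundary) and the second is continuous. Since the Sobolev--Slobodecki space coincides with $H^\alpha(0,T)=W^{\alpha,2}(0,T)$, the space in question is exactly
\[
\mathrm{W}^\alpha=L^2(0,T;X)\cap W^{\alpha,2}(0,T;Y),
\]
with the sum of the two norms, while the target space is $L^2(0,T;B)=L^2(0,T;L^2(\Omega))$.

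Next I would apply the compactness criterion of Simon \cite{S1986}: a bounded subset $F$ of $L^2(0,T;B)$ is relatively compact provided that (i) the mean values $\left\{\int_{t_1}^{t_2}v(t)\,dt\relmiddle{:}v\in F\right\}$ lie in a relatively compact subset of $B$ for all $0<t_1<t_2<T$, and (ii) the time translates satisfy $\|v(\cdot+h)-v\|_{L^2(0,T-h;B)}\to0$ as $h\to0$, uniformly for $v\in F$. Taking $F$ to be the unit ball of $\mathrm{W}^\alpha$, condition (i) follows from the compactness of $X\hookrightarrow B$: by H\"older's inequality the integrals are bounded in $X$, hence precompact in $B$.

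For condition (ii) the mechanism is an Ehrling-type inequality: compactness of $X\hookrightarrow B$ furnishes, for each $\eta>0$, a constant $C_\eta$ with $\|w\|_B\le\eta\|w\|_X+C_\eta\|w\|_Y$. Applying this to $w=v(\cdot+h)-v(\cdot)$ and integrating in $t$ gives
\[
\|v(\cdot+h)-v\|_{L^2(0,T-h;B)}\le\eta\,\|v(\cdot+h)-v\|_{L^2(0,T-h;X)}+C_\eta\,\|v(\cdot+h)-v\|_{L^2(0,T-h;Y)}.
\]
The first term is bounded by $2\eta\|v\|_{L^2(0,T;X)}$, uniformly small over $F$, while the second tends to $0$ as $h\to0$ uniformly over $F$ because $F$ is bounded in $W^{\alpha,2}(0,T;Y)$. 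Choosing $\eta$ small and then $h$ small yields (ii), and Simon's theorem then gives that the unit ball of $\mathrm{W}^\alpha$ is relatively compact in $L^2(0,T;L^2(\Omega))$, i.e.\ the asserted compact embedding.

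The main obstacle is the last step: quantifying the uniform decay of the $Y$-valued time translations from the fractional bound, since the modulus of continuity $\int_0^{T-h}\|v(t+h)-v(t)\|_Y^2\,dt$ is controlled by the Gagliardo seminorm $[v]_{W^{\alpha,2}(0,T;Y)}^2$ only in an averaged sense in $h$. This is precisely the delicate fractional-order ingredient, so rather than re-deriving the translation estimate by hand I would cite the fractional form of the Aubin--Lions--Simon lemma contained in \cite{S1986} and verify that the exponents $p=2$ and $s=\alpha\in(0,1)$ satisfy its hypotheses.
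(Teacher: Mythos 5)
Your proposal is correct and follows essentially the same route as the paper: both reduce the claim to Simon's three-space compactness framework with $X=H_0^1(\Omega)$, $B=L^2(\Omega)$, $Y=H^{-1}(\Omega)$, and both obtain the crucial uniform decay of the $H^{-1}$-valued time translates by citing Simon's fractional translation estimate (the paper invokes Lemma 5 of \cite{S1986}, giving $\|\tau_h v - v\|_{L^2(0,T-h;H^{-1}(\Omega))}\le C M h^\alpha$, which resolves the ``averaged in $h$'' worry you raise) before applying his compactness theorem (Theorem 5 of \cite{S1986}). Your unfolding of the Ehrling-inequality mechanism is just an expository expansion of the proof of that theorem, not a genuinely different argument.
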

\begin{proof}
Let $\{v^\vae\}\subset \mathrm{W}^\alpha$ 
be a bounded sequence for $\vae>0$. 
That is, 
there exists $M>0$ such that 
$\|v^\vae\|_{\mathrm{W}^\alpha}\leq M$ for any $\vae>0$. 

Applying the Lemma 5 in Simon \cite{S1986} 
(with $\mu=\alpha, r=2, p=2$), we see that
$v^\vae\in L^2(0,T;H^{-1}(\Omega))$ satisfies 
\begin{equation*}
\|\tau_h v^\vae-v^\vae\|_{L^2(0,T-h;H^{-1}(\Omega))}
\leq 
CM h^\alpha
\end{equation*}
for $h>0$, where 
$C>0$ is a constant which is independent of $v^\vae$, 
and $\tau_h$ is the translation defined by 
$(\tau_h v)(x,t)=v(x,t+h)$ for $h>0$. 

Note that $H_0^1(\Omega)\subset L^2(\Omega)\subset H^{-1}(\Omega)$ and 
$H_0^1(\Omega)\hookrightarrow L^2(\Omega)$ 
is the compact embedding. 
By this relation of function spaces and the above inequality, 
the assumptions of 
the Theorem 5 in Simon \cite{S1986} are satisfied 
(with $p=2$, $X=H_0^1(\Omega), B=L^2(\Omega), Y=H^{-1}(\Omega)$ and $F=\{v^\vae\}$). 
Hence, it follows that 
$\{v^\vae\}$ is relatively compact in $L^2(0,T;L^2(\Omega))$. 
Thus, we see that 
$\mathrm{W}^\alpha\hookrightarrow L^2(0,T;L^2(\Omega))$
is the compact embedding and we completed the proof. 
\end{proof}
\begin{rmk}
We will use this lemma to prove our homogenization result. 
This kind of result is called 
the Aubin-Lions-Simon Lemma, and 
widely used to study partial differential equations. 
For instance, the Aubin-Lions-Simon Lemma is used 
to prove the existence of solutions of nonlinear initial boundary value problems. 
\end{rmk}
\begin{rmk}
\label{rmk:2102}
Let $u_0\in L^2(\Omega)$. 
Then we have
$\W^\alpha(u_0)\subset \mathrm{W}^\alpha$ and 
the following inequality: 
there exists a constant $C>0$ such that 
\begin{equation*}
\|v\|_{\mathrm{W}^\alpha}
\leq
\|v\|_{\W^\alpha(u_0)}+
C
\|u_0\|_{L^2(\Omega)}
\end{equation*}
for $v\in \W^\alpha(u_0)$. 
\end{rmk}

%ssssssssssssssssssssssssssssssssssssssssssssssssss	
%
%	On the Homogenization
%
%ssssssssssssssssssssssssssssssssssssssssssssssssss	
\subsection{On the Homogenization}

We will obtain the homogenization results 
via the rigorous argument by using the functional analysis. 
Here we prepare the basic tools on the homogenization 
which will be used 
in the proof of our results. 
These are based on the monograph \cite{CD1999} by Cioranescu and Donato.

%--------------------------------------------------
%
% 	Oscillating Function
%
%--------------------------------------------------
\subsubsection{Rapidly Oscillating Periodic Function}
First we see the definition and the weak limit of 
rapidly oscillating periodic functions. 

Let $Y$ be a reference cell 
defined by
\begin{equation*}
Y=
(0,\ell_1)\times\cdots\times(0,\ell_N)
,
\end{equation*}
where $\ell_i>0$ ($i=1,\ldots,N$). 
Let $\{e_1,\ldots,e_N\}$ be the canonical basis on $\mathbb{R}^N$  

\begin{defn}[$Y$-periodic]
A function $g$ defined a.e.\ on $\mathbb{R}^N$ is called $Y$-periodic  
if and only if 
\begin{equation*}
g(y)=g(y+k\ell_i e_i),\quad \utext{a.e.}\ y\in \mathbb{R}, 
\quad k\in \mathbb{Z},
\quad i\in\{1,\ldots,N\}.
\end{equation*}
\end{defn}

We define the mean value of $g\in L^1(Y)$ 
by
\begin{equation*}
\M_Y(g)=
\frac1{|Y|}
\int_Y g(y)\,dy.
\end{equation*}
Then 
we may obtain the following lemma.
\begin{lem}
\label{lem:2201}
Let $g$ be a $Y$-periodic function in $L^2(Y)$. 
For $\vae>0$, we set
\begin{equation*}
g^\vae(x)=g\left(\frac{x}{\vae}\right)
\ \utext{a.e. on $\mathbb{R}^N$}.
\end{equation*}
Then we have 
the following weak convergence:
for any bounded domain $\omega\subset\mathbb{R}^N$, 
\begin{equation*}
g^\vae
\wto
\M_Y(g)
\ \utext{weakly in $L^2(\omega)$},  
\end{equation*}
as $\vae\to 0$. 
\end{lem}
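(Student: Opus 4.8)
The plan is to combine a uniform $L^2(\omega)$ bound on $\{g^\vae\}$ with a density argument, reducing the weak convergence to the evaluation of $\int g^\vae$ over boxes. First I would establish that $\{g^\vae\}$ is bounded in $L^2(\omega)$ uniformly in $\vae$. After the change of variables $y=x/\vae$ we have $\int_\omega|g^\vae(x)|^2\,dx=\vae^N\int_{\omega/\vae}|g(y)|^2\,dy$; enclosing the bounded set $\omega/\vae$ in a finite union of whole $Y$-cells and invoking the $Y$-periodicity of $|g|^2\in L^1(Y)$, this quantity is bounded by a constant multiple of $|\omega|\,\M_Y(|g|^2)$, independently of $\vae$. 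This makes $\{g^\vae\}$ weakly precompact in the Hilbert space $L^2(\omega)$, so it only remains to identify the weak limit.

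To identify the limit I would exploit that finite linear combinations of characteristic functions $\chi_Q$ of rectangular boxes $Q\subset\omega$ are dense in $L^2(\omega)$, and that, once the uniform bound is in hand, weak convergence need only be tested against such a dense family. By linearity the problem reduces to showing, for each box $Q$,
\begin{equation*}
\int_Q g^\vae(x)\,dx \longrightarrow \M_Y(g)\,|Q|
\quad\utext{as }\vae\to 0.
\end{equation*}

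The core computation is the box case. Writing $\int_Q g^\vae\,dx=\vae^N\int_{Q/\vae}g(y)\,dy$ and tiling $\mathbb{R}^N$ by the integer translates $Y_k=Y+\sum_i k_i\ell_i e_i$, $k\in\mathbb{Z}^N$, I would split $Q/\vae$ into the cells $Y_k$ entirely contained in $Q/\vae$ and those meeting $\pp(Q/\vae)$. On each interior cell periodicity gives $\int_{Y_k}g=\int_Y g=|Y|\,\M_Y(g)$, so, denoting by $n_\vae$ the number of interior cells, their contribution is $\vae^N n_\vae\,|Y|\,\M_Y(g)$, and $\vae^N n_\vae\,|Y|$ is exactly the volume of the union of the corresponding $\vae$-scaled cells, which converges to $|Q|$. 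The boundary cells number $O(\vae^{-(N-1)})$, so their total contribution is bounded by $\vae^N\,O(\vae^{-(N-1)})\int_Y|g|=O(\vae)\to 0$, using $g\in L^2(Y)\subset L^1(Y)$. This yields the displayed box limit.

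Finally I would upgrade to arbitrary test functions by density: given $\phi\in L^2(\omega)$ and $\eta>0$, choose a step function $\psi$ with $\|\phi-\psi\|_{L^2(\omega)}<\eta$ and bound $\int_\omega g^\vae\phi-\M_Y(g)\int_\omega\phi$ by the sum of $\|g^\vae\|_{L^2}\|\phi-\psi\|_{L^2}$, the step-function term that vanishes as $\vae\to 0$ by the box case, and $|\M_Y(g)|\,|\omega|^{1/2}\|\phi-\psi\|_{L^2}$; the first piece is controlled by the uniform bound from the first step, and letting $\eta\to 0$ closes the argument. I expect the only delicate point to be the boundary-layer estimate, namely carefully counting the partial cells along $\pp(Q/\vae)$ and verifying that their contribution is genuinely of order $\vae$; the remainder is bookkeeping resting on periodicity and the scaling change of variables.
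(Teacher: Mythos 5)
Your proof is correct and is the standard argument: the uniform $L^2$ bound by the scaling change of variables and periodicity, the reduction by density to characteristic functions of boxes, and the cell-counting with an $O(\vae)$ boundary-layer estimate. The paper does not prove this lemma itself but cites Theorem 2.6 in \cite{CD1999}, whose proof is essentially the one you give, so there is nothing to add beyond noting that your boundary-cell count $O(\vae^{-(N-1)})$ and the bound $\bigl|\int_{Y_k\cap(Q/\vae)}g\bigr|\leq\int_Y|g|$ are exactly the delicate points and you have handled them correctly.
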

This $\vae Y$-periodic function $g^\vae$ is 
called the rapidly oscillating periodic function because it oscillates rapidly when $\vae$ tends to $0$.
We may refer to Theorem 2.6 in \cite{CD1999} 
for the proof of the above lemma.

Throughout this article,
we assume that 
the matrix $A$ satisfies the followings  
in the context of the homogenization: 
\begin{equation}
\label{eq:2201}
\left\{
\begin{aligned}
&
\text{$a_{ij}$ is $Y$-periodic for all $i,j=1,\ldots,N$}, 
\\
&
A=(a_{ij})\in M_S(\nu,\mu, Y)
,
\end{aligned}
\right.
\end{equation}
where $Y$ is the reference cell defined above. 
We set 
\begin{equation}
\label{eq:2202}
\left\{
\begin{aligned}
&
a_{ij}^\vae(x)
=
a_{ij}\left(\frac{x}{\vae}\right),\quad 
x\in\mathbb{R}^N,
\\
&
A^\vae(x)=A\left(\frac{x}{\vae}\right)
=
(a_{ij}^\vae)_{i,j=1,\ldots,N},\quad 
x\in\mathbb{R}^N.
\end{aligned}
\right.
\end{equation}
We will consider the periodic equations with 
this matrix $A^\vae$ in the homogenization. 
%--------------------------------------------------
%
% 	Auxiliary Problems
%
%--------------------------------------------------
\subsubsection{Auxiliary Problems}

To prove our main result on the homogenization 
for time-fractional diffusion equations, 
we adopt the oscillating test function method by Tartar \cite{T1978} in which 
we consider the solution of the auxiliary periodic boundary value problem.
We will start with the following problem on the reference cell $Y$:
for any $\xi\in\mathbb{R}^N$, 
\begin{equation}
\label{eq:2203}
\left\{
\begin{aligned}
&
-\dd_y (A(y)\nabla_y \chi_\xi)=-\dd_y(A(y)\xi)\quad \text{in $Y$},
\\&
\text{%
$\chi_\xi$ is $Y$-periodic and 
$\M_Y(\chi_\xi)=0$%
},
\end{aligned}
\right.
\end{equation}
where $A$ satisfies the condition \eqref{eq:2201}. % and \eqref{eq:2202}. 
The variational formulation of this problem \eqref{eq:2203} is written by
\begin{equation}
\label{eq:2204}
\left\{
\begin{aligned}
&
\text{Find $\chi_\xi\in W_\per(Y)$ such that}
\\&
\int_Y A\nabla_y \chi_\xi\cdot\nabla_y v\,dy
=
\int_Y A\xi\cdot\nabla_y v\,dy
,\quad
\text{for all $v\in W_\per(Y)$},
\end{aligned}
\right.
\end{equation}
where the function space $W_\per(Y)$ is defined by
\begin{equation*}
W_\per (Y)
=
\{
v\in H_\per^1(Y)
\mid
\M_Y(v)=0
\}.
\end{equation*}
Here $H^1_\per(Y)$ is the following function space:
\begin{equation*}
H^1_\per(Y)
=
\overline{C_\per^\infty(Y)}^{H^1(Y)}, 
\end{equation*}
where 
$C_\per^\infty(Y)$ 
is the space of 
functions in 
$C^\infty(\mathbb{R^N})$ that are $Y$-periodic. 
For more details on the function space $H_\per^1(Y)$, we may refer to \S3.4 in the monograph \cite{CD1999}. 

It is known that 
there exists a unique solution $\chi_\xi\in W_\per(Y)$ of the problem \eqref{eq:2204} by 
Theorem 4.27 in \cite{CD1999}. 

Now we extend 
$\chi_\xi$ to $\mathbb{R}^N$ by periodicity 
(still denoted by $\chi_\xi$ the extension ). 
Then we see that this $\chi_\xi$ is the 
unique solution of the following problem (see Theorem 4.28 in \cite{CD1999}):
for any $\xi\in\mathbb{R}^N$, 
\begin{equation}
\label{eq:2205}
\left\{
\begin{aligned}
&
-\dd_y (A(y)\nabla_y \chi_\xi)=-\dd_y(A(y)\xi)\quad \text{in $\D^\prime(\mathbb{R}^N)$},
\\&
\text{%
$\chi_\xi$ is $Y$-periodic and 
$\M_Y(\chi_\xi)=0$%
}.
\end{aligned}
\right.
\end{equation}

Next we consider a function $w_\xi$
defined by $\chi_\xi$:
\begin{equation}
\nonumber
%\label{eq:2206}
w_\xi=\xi\cdot y -\chi_\xi
\end{equation}
for any $\xi\in\mathbb{R}^N$.
Then,
%$w_\xi$ is the unique solution of the following problem:
%\begin{equation}
%\label{eq:2207}
%\left\{
%\begin{aligned}
%&
%-\dd_y (A(y)\nabla_y w_\xi)=0\quad \text{in $Y$},
%\\&
%\text{%
%$w_\xi-\xi\cdot y$ is $Y$-periodic and 
%$\M_Y(w_\xi-\xi\cdot y)=0$
%}.
%\end{aligned}
%\right.
%\end{equation}
%The variational formulation corresponding to this problem \eqref{eq:2207} is 
%\begin{equation*}
%\left\{
%\begin{aligned}
%&
%\text{Find $w_\xi$ such that $w_\xi-\xi\cdot y\in W_\per(Y)$ and}
%\\&
%\int_Y A\nabla_y w_\xi\cdot\nabla_y v\,dy
%=
%0,\quad
%\text{for all $v\in W_\per(Y)$}.
%\end{aligned}
%\right.
%\end{equation*}
%By \eqref{eq:2205} and \eqref{eq:2206}, 
we may see that 
$w_\xi$ is the unique solution of the following problem:
for any $\xi\in\mathbb{R}^N$, 
\begin{equation}
\label{eq:2208}
\left\{
\begin{aligned}
&
-\dd_y (A(y)\nabla_y w_\xi)=0
\quad \text{in $\D^\prime(\mathbb{R}^N)$},
\\&
\text{%
$w_\xi-\xi\cdot y$ is $Y$-periodic and 
$\M_Y(w_\xi-\xi\cdot y)=0$%
}.
\end{aligned}
\right.
\end{equation}

We will use the following function $w_\xi^\vae$  as an oscillating test function in the proof of our result on the homogenization. 
\begin{equation*}
w_\xi^\vae(x)
=
\vae w_\xi\left(\frac{x}{\vae}\right)
=
\xi\cdot x-\vae\chi_\xi\left(\frac{x}{\vae}\right)
,\quad x\in \Omega
\end{equation*}
for $\xi\in\mathbb{R}^N$, 
where $w_\xi$ and $\chi_\xi$ is the functions defined as above. 
We observe the convergences of the function $w_\xi^\vae$. 
Since $\chi_\xi$ is $Y$-periodic, 
we have
\begin{equation*}
w_\xi^\vae
\wto
\xi\cdot x
\ \text{weakly in $L^2(\Omega)$}
\end{equation*}
by Lemma \ref{lem:2201}. 
Noting that $w_\xi=-\chi_\xi+\xi\cdot y$, 
we obtain
\begin{equation*}
(\nabla_x w_\xi^\vae)(x)
=
(\nabla_y w_\xi)\left(\frac{x}{\vae}\right)
=
\xi-\nabla_y\chi_\xi\left(\frac{x}{\vae}\right). 
\end{equation*}
Since $\chi_\xi$ is $Y$-periodic, 
we see that $\nabla_y w_\xi$ is $Y$-periodic, too. 
Using Lemma \ref{lem:2201} again, 
we may get 
\begin{equation*}
\nabla_x w_\xi^\vae
\wto
\M_Y(\xi-\nabla_y\chi_\xi)
=
\xi-\M_Y(\nabla_y\chi_\xi)
\ \text{weakly in $L^2(\Omega)$}
.
\end{equation*}
By Green's formula, we have
\begin{equation*}
\int_Y
\nabla_y \chi_\xi(y)
\,dy
=
\int_{\pp Y}
\chi_\xi\cdot n
\,dS_y
=0,
\end{equation*}
where $n$ denotes the unit outward normal vector to $\pp Y$. 
Here we use the fact 
that
the traces of $\chi_\xi$ are same on the opposite faces of $Y$ 
(see Proposition 3.49 in \cite{CD1999}). 
Hence we obtain 
\begin{equation*}
\M_Y(\nabla_y\chi_\xi)=0. 
\end{equation*}
Thus 
we see that 
there exists the subsequence of $\{w_\xi^\vae\}$ 
(still denoted by $\vae$) satisfying  
the following convergences
\begin{equation}
\label{eq:2209}
\left\{
\begin{aligned}
&
w_\xi^\vae
\wto
\xi\cdot x
\ \text{weakly in $H^1(\Omega)$},
\\&
w_\xi^\vae
\to
\xi\cdot x
\ \text{strongly in $L^2(\Omega)$}.
\end{aligned}
\right.
\end{equation}
Here we used the Sobolev embedding theorem 
(see e.g., Adams \cite{A1975}), 
that is , the fact that 
the embedding $H^1(\Omega)\hookrightarrow L^2(\Omega)$ is the compact embedding. 
The above convergences \eqref{eq:2209} play an important role in the proof of the homogenization.

Now we consider the convergence of the following vector valued function defined by
\begin{equation*}
\eta_\xi^\vae
=
A^\vae\nabla w_\xi^\vae
=
\left(
\sum_{i=1}^N
a_{i1}^\vae\frac{\pp w_\xi^\vae}{\pp x_i}
,\ldots,
\sum_{i=1}^N
a_{iN}^\vae\frac{\pp w_\xi^\vae}{\pp x_i}
\right)
.
\end{equation*}
By the definition of $A^\vae$ and $w_\xi^\vae$, 
we see that 
\begin{equation*}
\eta_\xi^\vae(x)
=
\left(
A\nabla_y w_\xi
\right)
\left(
\frac{x}{\vae}
\right)
=
\frac1\vae
\left[
A
\left(
\frac{x}{\vae}
\right)
\left(
\nabla_y(\vae w_\xi)
\right)
\left(
\frac{x}{\vae}
\right)
\right].
\end{equation*}
Since $A$ is $Y$-periodic, 
$A\nabla_y w_\xi$ is $Y$-periodic
and then 
$\eta_\xi^\vae$ is $Y$-periodic. 
By Lemma \ref{lem:2201}, we have
\begin{equation}
\label{eq:2210}
\eta_\xi^\vae
\wto
\M_Y(A\nabla w_\xi)
=A^0\xi
\ \utext{weakly in $(L^2(\Omega))^N$},
\end{equation}
where $A^0$ is the matrix defined by \eqref{eq:3106} in Theorem \ref{thm:3101}.

Moreover we will show that $\eta_\xi^\vae$ satisfies
\begin{equation}
\label{eq:2211}
\int_\Omega
\eta_\xi^\vae(x)
\cdot
\nabla\Phi(x)
\,dx
=0
\quad
\utext{for $\Phi\in H_0^1(\Omega)$}
.
\end{equation}
Let $\varphi\in\D(\Omega)$ and set
\begin{equation*}
\varphi^\vae(y)
=
\varphi(\vae y)
\ \utext{a.e. on $\mathbb{R}^N$}.
\end{equation*}
Obviously, we see that $\varphi^\vae\in\D(\mathbb{R}^N)$. 
Hence we have
\begin{equation*}
\int_{\mathbb{R}^N}
(A\nabla_y w_\xi)(y)
\cdot 
\nabla_y \varphi^\vae(y)
\,dy
=0.
\end{equation*}
Since $\supp\varphi\subset\Omega$, we have
\begin{equation*}
\int_\Omega
(A\nabla_y w_\xi)\left(\frac{x}{\vae}\right)
\cdot\nabla \varphi(x)
\,dx
=0
\quad
\utext{for any $\varphi\in\D(\Omega)$}
\end{equation*}
by the change of variables $x=\vae y$. 
By the density argument, we conclude \eqref{eq:2211}.

At the end of this section, 
we see a lemma on the matrix $\widehat{A}$ defined by $w_\xi$. 
\begin{lem}
\label{lem:2202}
Let $A\in M_S(\nu,\mu,Y)$. 
We define the $N\times N$ constant matrix 
$\widehat{A}=(\widehat{a}_{ij})_{i,j=1,\ldots,N}$ by
\begin{equation*}
\widehat{A}\xi
=
\M_Y(A\nabla_y w_\xi),\quad \xi\in\mathbb{R}^N
.
\end{equation*}
Then we have 
\begin{equation*}
\widehat{A}
\in
M_S(\nu,\mu,\Omega)
.
\end{equation*}
\end{lem}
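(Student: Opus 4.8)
The plan is to verify, one by one, the four defining properties of $M_S(\nu,\mu,\Omega)$ for the constant matrix $\widehat{A}$. Property i) is immediate, since $\widehat{A}$ is a constant matrix and its entries belong to $L^\infty(\Omega)$ trivially. The whole argument then hinges on rewriting the associated bilinear form in a manifestly symmetric ``energy'' shape: I would first establish that for all $\xi,\zeta\in\mathbb{R}^N$,
\begin{equation*}
(\widehat{A}\xi,\zeta)=\M_Y(A\nabla_y w_\xi\cdot\nabla_y w_\zeta).
\end{equation*}
Indeed, by the definition of $\widehat{A}$ we have $(\widehat{A}\xi,\zeta)=\zeta\cdot\M_Y(A\nabla_y w_\xi)=\M_Y(A\nabla_y w_\xi\cdot\zeta)$; writing $\zeta=\nabla_y w_\zeta+\nabla_y\chi_\zeta$ and noting that $\chi_\zeta\in W_\per(Y)$, the weak formulation \eqref{eq:2204} (in the equivalent form $\M_Y(A\nabla_y w_\xi\cdot\nabla_y v)=0$ for all $v\in W_\per(Y)$, which follows from $\nabla_y w_\xi=\xi-\nabla_y\chi_\xi$) makes the term $\M_Y(A\nabla_y w_\xi\cdot\nabla_y\chi_\zeta)$ vanish, leaving exactly the displayed identity.

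From this representation, property ii) is almost free: since $A$ is symmetric we have $A\nabla_y w_\xi\cdot\nabla_y w_\zeta=\nabla_y w_\xi\cdot A\nabla_y w_\zeta=A\nabla_y w_\zeta\cdot\nabla_y w_\xi$, so $(\widehat{A}\xi,\zeta)=(\widehat{A}\zeta,\xi)$ for all $\xi,\zeta$, i.e.\ $\widehat{a}_{ij}=\widehat{a}_{ji}$. For the coercivity iii), I take $\zeta=\xi$ and use property iii) of $A$:
\begin{equation*}
(\widehat{A}\xi,\xi)=\M_Y(A\nabla_y w_\xi\cdot\nabla_y w_\xi)\geq\nu\,\M_Y(|\nabla_y w_\xi|^2).
\end{equation*}
Since $\nabla_y w_\xi=\xi-\nabla_y\chi_\xi$ and $\M_Y(\nabla_y\chi_\xi)=0$ (already established above via Green's formula), the cross term drops out and $\M_Y(|\nabla_y w_\xi|^2)=|\xi|^2+\M_Y(|\nabla_y\chi_\xi|^2)\geq|\xi|^2$, whence $(\widehat{A}\xi,\xi)\geq\nu|\xi|^2$.

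The delicate point, and the step I expect to be the main obstacle, is the upper bound iv). A naive estimate $(\widehat{A}\xi,\xi)\leq\mu\,\M_Y(|\nabla_y w_\xi|^2)$ is useless here, because $\M_Y(|\nabla_y w_\xi|^2)\geq|\xi|^2$ points in the wrong direction. Instead I would invoke the variational (minimization) characterization of the energy: the cell problem \eqref{eq:2204} is the Euler--Lagrange equation of the strictly convex functional $v\mapsto\M_Y(A(\xi-\nabla_y v)\cdot(\xi-\nabla_y v))$ on $H^1_\per(Y)$, so its minimizer is $\chi_\xi$ and
\begin{equation*}
(\widehat{A}\xi,\xi)=\min_{v\in H^1_\per(Y)}\M_Y\!\left(A(\xi-\nabla_y v)\cdot(\xi-\nabla_y v)\right)\leq\M_Y(A\xi\cdot\xi)\leq\mu|\xi|^2,
\end{equation*}
where the first inequality comes from testing with $v=0$ and the second from property iv) of $A$. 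Combining this with the coercivity, the symmetric matrix $\widehat{A}$ has all its eigenvalues in $[\nu,\mu]$, so its operator norm is at most $\mu$; this yields $|\widehat{A}\xi|\leq\mu|\xi|$ and establishes iv), completing the verification that $\widehat{A}\in M_S(\nu,\mu,\Omega)$.
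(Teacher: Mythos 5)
Your proof is correct. Note that the paper itself gives no proof of this lemma; it only cites Corollary 6.10, Proposition 8.3 and Corollary 8.13 of Cioranescu--Donato, and your argument is precisely the standard one behind those results in the symmetric case: the energy representation $(\widehat{A}\xi,\zeta)=\M_Y(A\nabla_y w_\xi\cdot\nabla_y w_\zeta)$ obtained from the cell problem, coercivity via $\M_Y(\nabla_y\chi_\xi)=0$, and the variational (minimization) characterization for the upper bound. You correctly identify the one genuinely delicate point: the bound $|\widehat{A}\xi|\le\mu|\xi|$ with the \emph{same} constant $\mu$ is only available because $A$ (hence $\widehat{A}$) is symmetric, so the quadratic-form bound $(\widehat{A}\xi,\xi)\le\mu|\xi|^2$ controls the operator norm; for non-symmetric $A$ the cited reference only yields the weaker constant $\mu^2/\nu$, which is consistent with the lemma being stated for $M_S$.
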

On the proof of this lemma, 
we may refer to the monograph \cite{CD1999} 
(see e.g, 
Corollary 6.10, 
Proposition 8.3
and 
Corollary 8.13
in \cite{CD1999}%
).

%--------------------------------------------------
%
% 	Previous Result for the Diffusion Equations
%
%--------------------------------------------------
%\subsubsection{Previous Result on the Diffusion Equations}
%\note{Need Previous Result on classical Diffusion Equations?}

%SSSSSSSSSSSSSSSSSSSSSSSSSSSSSSSSSSSSSSSSSSSSSSSSSS
%
%	Main Results
%
%SSSSSSSSSSSSSSSSSSSSSSSSSSSSSSSSSSSSSSSSSSSSSSSSSS
\section{Main Results}
First, we see the homogenization result in \S3.1. 
Then, we consider the inverse problems of determining 
the constant diffusion coefficient
for 
the time-fractional diffusion equation in \S3.2. 
Finally,
we state our inverse problems 
between different structures in \S3.3. 
%ssssssssssssssssssssssssssssssssssssssssssssssssss
%
%	On the Homogenization
%
%ssssssssssssssssssssssssssssssssssssssssssssssssss
\subsection{On the Homogenization}

We begin with the homogenization result for 
the time-fractional diffusion equation. 
We state the main theorem as Theorem \ref{thm:3101} in \S3.1.1. 
Then 
we see the examples of 
the homogenized coefficients 
which are coefficients of the model obtained from the homogenization. 
In Theorem \ref{thm:3101}, 
we will see that the homogenized coefficient matrix 
is given by the equality \eqref{eq:3106}. 
However, it is difficult to study this matrix directly, 
so we consider here examples that makes it easy to investigate the inverse problem.
As examples, we consider 
the one-dimensional case in space in \S3.1.2 and 
the case of the layered material in \S3.1.3.

\subsubsection{Homogenization for Fractional Diffusion Equations}
We have the following theorem. 
\begin{thm}
\label{thm:3101}
Let $f^\vae\in L^2(0,T;H^{-1}(\Omega))$ 
and $u_0^\vae\in L^2(\Omega)$. 
We assume that 
$u^\vae$ is the weak solution of the following problem in the periodic structure:
\begin{equation}
\label{eq:3101}
\left\{
\begin{aligned}
&
\pp_t^\alpha u^\vae(x,t)
-
\dd (A^\vae(x)\nabla u^\vae(x,t))
=
f^\vae(x,t),
&(x,t)\in\Omega\times(0,T)
,\\&
u^\vae(x,t)=0,
&(x,t)\in\pp\Omega\times(0,T)
,\\&
u^\vae(x,0)=u_0^\vae(x),
&x\in\Omega
,
\end{aligned}
\right.
\end{equation}
where $A^\vae$ is the coefficient matrix 
defined by \eqref{eq:2201} and \eqref{eq:2202}. 
We also assume that the following convergences:
there exists $u_0\in L^2(\Omega)$ and 
$f\in L^2(0,T;H^{-1}(\Omega))$ such that
\begin{equation}
\label{eq:3102}
\left\{
\begin{aligned}
&\utext{i)}
&&
u_0^\vae
\wto
u_0
\ \utext{weakly in $L^2(\Omega)$},
\\
&\utext{ii)}
&&
f^\vae
\to
f
\ \utext{strongly in $L^2(0,T;H^{-1}(\Omega))$}.
\end{aligned}
\right.
\end{equation}
Then $u^\vae$ satisfies the followings:
\begin{equation}
\label{eq:3103}
\left\{
\begin{aligned}
&\utext{i)}
&&
u^\vae
\wto
u^0
\ \utext{weakly in $L^2(0,T;H_0^1(\Omega))$},
\\
&\utext{ii)}
&&
\pp_t^\alpha(u^\vae-u_0^\vae)
\wto
\pp_t^\alpha(u^0-u_0)
\ \utext{weakly in $L^2(0,T;H^{-1}(\Omega))$},
\\
&\utext{iii)}
&&
A^\vae \nabla u^\vae
\wto
A^0 \nabla u^0
\ \utext{weakly in $(L^2(0,T;L^2(\Omega)))^N$}.
\end{aligned}
\right.
\end{equation}
Furthermore,
\begin{equation}
\label{eq:3104}
u^\vae\to u^0
\ \utext{strongly in $L^2(0,T;L^2(\Omega))$},
\end{equation}
where $u^0$ 
is the weak solution of the homogenized problem:
\begin{equation}
\label{eq:3105}
\left\{
\begin{aligned}
&
\pp_t^\alpha u^0(x,t)
-
\dd (A^0\nabla u^0(x,t))
=
f(x,t),
&(x,t)\in\Omega\times(0,T)
,\\&
u^0(x,t)=0,
&(x,t)\in\pp\Omega\times(0,T)
,\\&
u^0(x,0)=u_0(x),
&x\in\Omega
.
\end{aligned}
\right.
\end{equation}
Here $A^0$ is the homogenized coefficient matrix 
defined by
\begin{equation}
\label{eq:3106}
A^0\xi
=
\M_Y(A\nabla_y w_\xi),\quad \xi\in\mathbb{R}^N.
\end{equation}
\end{thm}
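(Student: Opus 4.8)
The plan is to adapt Tartar's oscillating test function method to the fractional time derivative. First I would record uniform bounds: the weak convergence $u_0^\vae \wto u_0$ and the strong convergence $f^\vae \to f$ make $\{u_0^\vae\}$ and $\{f^\vae\}$ bounded, so Theorem~\ref{thm:well-posedness_FDE} gives $\|u^\vae\|_{\W^\alpha(u_0^\vae)} \le C$ uniformly, and hence a uniform bound in $\mathrm{W}^\alpha$ via Remark~\ref{rmk:2102}. Weak compactness then produces a subsequence (not relabelled) with $u^\vae \wto u^*$ weakly in $L^2(0,T;H_0^1(\Omega))$, $\pp_t^\alpha(u^\vae-u_0^\vae) \wto \chi$ weakly in $L^2(0,T;H^{-1}(\Omega))$, and $\sigma^\vae := A^\vae\nabla u^\vae \wto \sigma^*$ weakly in $(L^2(0,T;L^2(\Omega)))^N$. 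The compact embedding of Lemma~\ref{lem:2101} upgrades the first of these to $u^\vae \to u^*$ strongly in $L^2(0,T;L^2(\Omega))$, giving \eqref{eq:3104}. Since $\pp_t^\alpha=(J^\alpha)^{-1}$ is bounded and therefore weakly continuous, the limit $\chi$ equals $\pp_t^\alpha(u^*-u_0)$, which is statement \eqref{eq:3103}~ii).

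Next I would pass to the limit in the weak formulation \eqref{eq:S2_wP} against a \emph{fixed} $\Phi\in H_0^1(\Omega)$. Using the weak convergences of $\pp_t^\alpha(u^\vae-u_0^\vae)$ and $\sigma^\vae$ and the strong convergence $f^\vae\to f$, the limits satisfy
\begin{equation*}
\langle \pp_t^\alpha(u^*-u_0),\Phi\rangle_{H^{-1}(\Omega),H_0^1(\Omega)} + (\sigma^*,\nabla\Phi)_{L^2(\Omega)} = \langle f,\Phi\rangle_{H^{-1}(\Omega),H_0^1(\Omega)}
\end{equation*}
for all such $\Phi$ and a.e.\ $t$. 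Everything then reduces to the identification $\sigma^*=A^0\nabla u^*$: once it holds, this is the weak form of the homogenized problem \eqref{eq:3105}, and since $A^0\in M_S(\nu,\mu,\Omega)$ by Lemma~\ref{lem:2202}, uniqueness in Theorem~\ref{thm:well-posedness_FDE} forces $u^*=u^0$ and promotes all convergences to the full sequence.

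To identify the flux I would fix $\xi\in\mathbb{R}^N$, $\varphi\in\D(\Omega)$, $\psi\in\D(0,T)$ and use the symmetry of $A^\vae$ in the pointwise identity $\sigma^\vae\cdot\nabla w_\xi^\vae = \eta_\xi^\vae\cdot\nabla u^\vae$, where $\eta_\xi^\vae=A^\vae\nabla w_\xi^\vae$. Splitting $\varphi\nabla(\cdot)=\nabla(\varphi\,\cdot)-(\cdot)\nabla\varphi$ on both sides, using $\int_\Omega\eta_\xi^\vae\cdot\nabla(u^\vae\varphi)\,dx=0$ from \eqref{eq:2211} and the weak formulation with the admissible test function $w_\xi^\vae\varphi$, I would reach an $\vae$-identity in which the flux contributions pair the weakly convergent $\sigma^\vae$ with $w_\xi^\vae\to\xi\cdot x$ (strongly in $L^2$ by \eqref{eq:2209}), while the remaining terms pair the strongly convergent $u^\vae$ with $\eta_\xi^\vae\wto A^0\xi$ (weakly, by \eqref{eq:2210}). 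Passing to the limit, cancelling the common $(\xi\cdot x)\nabla\varphi$ contributions and integrating by parts in $x$ (valid since $A^0\xi$ is constant and $A^0$ symmetric) should leave $\int_0^T\psi\int_\Omega(\sigma^*-A^0\nabla u^*)\cdot\xi\,\varphi\,dx\,dt=0$ for all $\xi,\varphi,\psi$, hence $\sigma^*=A^0\nabla u^*$.

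The step I expect to be the main obstacle is the time-derivative term $\int_0^T\psi\,\langle\pp_t^\alpha(u^\vae-u_0^\vae),w_\xi^\vae\varphi\rangle\,dt$ entering the identity above, where both factors converge only weakly, so the pairing does not pass to the limit directly. I would resolve it precisely as one trades $\pp_t u^\vae$ for $u^\vae$ in the classical parabolic case, but using the representation \eqref{eq:2102}, $\pp_t^\alpha v=\tfrac{d}{dt}(J^{1-\alpha}v)$. Because $w_\xi^\vae\varphi$ is independent of $t$, an integration by parts in time moves the derivative onto $\psi$ and turns the term into $-\int_0^T\psi'\,\langle J^{1-\alpha}(u^\vae-u_0^\vae),w_\xi^\vae\varphi\rangle\,dt$. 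Now $J^{1-\alpha}$ is bounded on $L^2(0,T;L^2(\Omega))$, so $J^{1-\alpha}u^\vae\to J^{1-\alpha}u^*$ strongly, and the time-independent part satisfies $J^{1-\alpha}u_0^\vae(t)=\tfrac{t^{1-\alpha}}{\Gamma(2-\alpha)}u_0^\vae$ with $u_0^\vae\wto u_0$; here the \emph{strong} $L^2(\Omega)$ convergence of $w_\xi^\vae\varphi$ in \eqref{eq:2209} renders each product of the form (strong)$\times$(weak) or (strong)$\times$(strong), so the limit is $\int_0^T\psi\,\langle\pp_t^\alpha(u^*-u_0),(\xi\cdot x)\varphi\rangle\,dt$ after integrating by parts back in time. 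Substituting this into the limit equation closes the identification and completes the proof.
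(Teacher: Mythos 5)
Your proposal is correct and follows essentially the same route as the paper: a priori bounds from Theorem \ref{thm:well-posedness_FDE}, weak compactness plus the Aubin--Lions--Simon lemma (Lemma \ref{lem:2101}) for the strong $L^2$ convergence, and Tartar's oscillating test function $w_\xi^\vae\varphi$ with the symmetry trick and \eqref{eq:2211} to identify the flux, including the key device of rewriting the time term via $\pp_t^\alpha=\pp_t J^{1-\alpha}$ and integrating by parts onto $\psi'$. The only (harmless) deviation is that you identify the weak limit of $\pp_t^\alpha(u^\vae-u_0^\vae)$ through the weak continuity of $J^\alpha$ rather than the paper's distributional integration-by-parts argument, which amounts to the same thing.
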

We will prove this theorem in \S4.1.

\begin{rmk}
\label{rmk:3101}
By Lemma \ref{lem:2202}, 
$A^0\in M_S(\nu,\mu,\Omega)$. 
Hence we see that 
there exists the unique weak solution $u^0$ of 
the homogenized problem \eqref{eq:3105} 
by Theorem \ref{thm:well-posedness_FDE}. 
\end{rmk}
\begin{rmk}
\label{rmk:3102}
We may replace the assumption for $f^\vae$ 
by the followings:
$f^\vae\in L^2(0,T;L^2(\Omega))$ and 
there exists 
$f\in L^2(0,T;L^2(\Omega))$ such that
\begin{equation*}
f^\vae
\to
f
\ \utext{strongly in $L^2(0,T;L^2(\Omega))$}.
\end{equation*}
Indeed, 
since 
the assumption for $f^\vae$ is used in the proof 
when taking limits, 
so the claim of our theorem holds under either assumption.
\end{rmk}
\begin{rmk}
On the methodology of the homogenization with periodic structures, 
we may also consider the two-scale convergence method introduced by
Nguetseng \cite{N1989} and Allaire \cite{A1992}. 
Furthermore, 
we may discuss the non-periodic cases by Theorem \ref{thm:3101}, 
but we do not deal with them here.
\end{rmk}

%--------------------------------------------------
%
% 	One Dimensional Case in Space
%
%--------------------------------------------------
\subsubsection{One-Dimensional Case in Space}
We consider the fractional diffusion equation 
for one-dimensional case in space, that is, $N=1$. 
Let $Y=(0,\ell_1)$ and 
$\Omega$ be an open interval in $\mathbb{R}$. 
We denote by $|Y|$ the measure of $Y$, 
that is, $|Y|=\ell_1$. 
We assume that $a^\vae$ is defined by
\begin{equation*}
a^\vae(x)
=
a\left(\frac{x}{\vae}\right)
,\quad
x\in\mathbb{R},
\end{equation*}
where
\begin{equation*}
a\in L^\infty(Y),\quad
\nu\leq a(y)\leq \mu,\ y\in\overline{Y},\quad
\text{$a$ is $Y$-periodic},
\end{equation*}
with 
constants $\nu,\mu\in\mathbb{R}$ satisfying $0<\nu<\mu$.  

Let 
$f^\vae\in L^2(0,T;H^{-1}(\Omega))$ and
$u_0^\vae \in L^2(\Omega)$ satisfy 
convergences \eqref{eq:3102} 
with 
$f\in L^2(0,T;H^{-1}(\Omega))$ and
$u_0 \in L^2(\Omega)$. 
We assume that 
$u^\vae$ satisfies the problem:
\begin{equation*}
\left\{
\begin{aligned}
&
\pp_t^\alpha u^\vae(x,t)
-
\pp_x (a^\vae(x)\pp_x u^\vae(x,t))
=
f^\vae(x,t),
&(x,t)\in\Omega\times(0,T)
,\\&
u^\vae(x,t)=0,
&(x,t)\in\pp\Omega\times(0,T)
,\\&
u^\vae(x,0)=u_0^\vae(x),
&x\in\Omega
.
\end{aligned}
\right.
\end{equation*}
By Theorem \ref{thm:3101}, 
$u^\vae$ converges to
the weak solution $u^0$ of the following problem in the sense of the convergences \eqref{eq:3103} and \eqref{eq:3104}:
\begin{equation*}
\left\{
\begin{aligned}
&
\pp_t^\alpha u^0(x,t)
-
\pp_x (a^0\pp_x u^0(x,t))
=
f(x,t),
&(x,t)\in\Omega\times(0,T)
,\\&
u^0(x,t)=0,
&(x,t)\in\pp\Omega\times(0,T)
,\\&
u^0(x,0)=u_0(x),
&x\in\Omega
,
\end{aligned}
\right.
\end{equation*}
where 
$a^0$ is the homogenized coefficient 
\begin{equation*}
a^0\xi
=
\M_Y(a\pp_y w_\xi)
,\quad
\xi\in\mathbb{R}.
\end{equation*}
In the one-dimensional case in space, 
we may represent the homogenized coefficient $a^0$ by $a$. 
\begin{lem}
\label{lem:3101}
We have the following representation formula of $a^0$ by $a$:
\begin{equation*}
a^0
=
\frac{1}{\M_Y\left(\frac{1}{a}\right)}
.
\end{equation*}
\end{lem}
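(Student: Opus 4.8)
The plan is to exploit the fact that in one spatial dimension the cell equation degenerates into an elementary ordinary differential equation whose first integral can be written down explicitly. The homogenized coefficient is defined by $a^0\xi=\M_Y(a\pp_y w_\xi)$, so it suffices to compute $\pp_y w_\xi$ from the auxiliary problem \eqref{eq:2208}, which in the scalar case $N=1$ reads $-\pp_y(a(y)\pp_y w_\xi)=0$ in $\D^\prime(\mathbb{R})$ together with the constraints that $w_\xi-\xi y$ is $Y$-periodic and $\M_Y(w_\xi-\xi y)=0$.

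The first and central step is to observe that the equation says precisely that the distribution $a\,\pp_y w_\xi$ has vanishing distributional derivative on the connected set $\mathbb{R}$; hence it equals a constant $c=c(\xi)$, i.e.\ $a(y)\pp_y w_\xi(y)=c$ for a.e.\ $y$. This is where the one-dimensional structure is essential: in higher dimensions the flux $A\nabla_y w_\xi$ is only divergence-free, not literally constant, which is why no such closed form is available there. Because $a\geq\nu>0$, I may divide to obtain
\begin{equation*}
\pp_y w_\xi(y)=\frac{c}{a(y)}
\quad\utext{a.e.\ }y\in Y.
\end{equation*}

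The second step is to pin down the constant $c$ using the periodicity normalization. Since $w_\xi-\xi y\in H^1_\per(Y)$, the integral of its derivative over one period vanishes, so that $\int_Y \pp_y w_\xi\,dy=\xi|Y|$. Substituting the formula for $\pp_y w_\xi$ gives $c\int_Y \frac{1}{a}\,dy=\xi|Y|$, that is, $c\,\M_Y(1/a)=\xi$, whence $c=\xi\big/\M_Y(1/a)$.

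Finally, I would substitute back into the definition of the homogenized coefficient: since $a\,\pp_y w_\xi\equiv c$ is constant, $a^0\xi=\M_Y(a\,\pp_y w_\xi)=\M_Y(c)=c=\xi\big/\M_Y(1/a)$, which yields $a^0=1\big/\M_Y(1/a)$ as claimed. I expect the only genuinely delicate point to be the rigorous passage from the distributional equation to the pointwise identity $a\,\pp_y w_\xi=c$, since $a$ is merely in $L^\infty(Y)$; this is handled by the standard characterization of distributions with zero derivative on an interval as constants, after which the uniform lower bound $a\geq\nu$ makes the division and the subsequent integrations fully justified.
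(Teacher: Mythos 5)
Your proposal is correct and follows essentially the same route as the paper: both arguments rest on the one-dimensional fact that the flux is constant (the paper integrates the cell problem for $\chi_1$ once to get $\chi_1 = y + C_1\int_0^y a^{-1}\,dz + C_0$ and fixes $C_1$ by periodicity, which is exactly your identity $a\,\pp_y w_\xi = c$ with $c$ fixed by the periodicity of $w_\xi - \xi y$). Working directly with $w_\xi$ rather than $\chi_\xi$ is only a cosmetic difference, and your handling of the distributional step and the division by $a \ge \nu$ is sound.
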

The proof of this lemma is given in Appendix.
%--------------------------------------------------
%
% 	Layered Material Case
%
%--------------------------------------------------
\subsubsection{Layered Material Case}
We consider the anomalous diffusion phenomena in 
the layered material. 
Then the the diffusion coefficient in the periodic structure $A$ is written by 
$A(y)=A(y_1)$ for $y=(y_1,\ldots, y_N)\in Y$, 
where $A$ satisfies the assumptions \eqref{eq:2201}. 
In this case, we note that $A$ is $(0,\ell_1)$-periodic.  

Under the assumptions of Theorem \ref{thm:3101}, 
we may obtain the result on the homogenization 
and $A^0$ can be represented by $A$ explicitly.  
That is, we have the following lemma, which proof is given in Appendix. 
\begin{lem}
\label{lem:3102}
We have the representation formula 
$A^0=(a_{ij}^0)$ by $A(y)=A(y_1)=(a_{ij}(y_1))$:
\begin{equation*}
A^0=A^\ast, 
\end{equation*}
where $A^\ast=(a_{ij}^\ast)$ is defined by 
\begin{align*}
a_{11}^\ast
&=\frac{1}{\M_{(0,\ell_1)}\left(\frac1{a_{11}}\right)}
,\\
a_{1j}^\ast
&=a_{11}^\ast\M_{(0,\ell_1)}\left(\frac{a_{1j}}{a_{11}}\right)
,\quad
a_{i1}^\ast
=a_{11}^\ast\M_{(0,\ell_1)}\left(\frac{a_{i1}}{a_{11}}\right)
,\\
a_{ij}^\ast
&=a_{11}^\ast
\M_{(0,\ell_1)}\left(\frac{a_{1j}}{a_{11}}\right)
\M_{(0,\ell_1)}\left(\frac{a_{i1}}{a_{11}}\right)
+
\M_{(0,\ell_1)}\left(a_{ij}-\frac{a_{1j}a_{i1}}{a_{11}}\right)
\end{align*}
for $i,j=2,\ldots,N$.
\end{lem}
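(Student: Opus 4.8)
The plan is to exploit the fact that in the layered case the coefficient $A$ depends on the single variable $y_1$, which forces the corrector to depend on $y_1$ alone and reduces the cell problem \eqref{eq:2208} to a scalar ordinary differential equation. Since \eqref{eq:2204} (equivalently \eqref{eq:2208}) admits a unique solution, it suffices to construct a function $w_\xi$ of the expected one-variable form and verify that it solves the problem; the formula for $A^0$ then follows by evaluating $\M_Y(A\nabla_y w_\xi)$.

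First I would seek a solution with $\nabla_y w_\xi=(g_1(y_1),g_2(y_1),\ldots,g_N(y_1))$ depending only on $y_1$. Because $\nabla_y w_\xi$ is a gradient, the symmetry relations $\pp_{y_1}g_j=\pp_{y_j}g_1$ hold; for $j\geq 2$ the right-hand side vanishes, so each transverse component $g_j$ is constant. The mean-value relation $\M_Y(\nabla_y w_\xi)=\xi$ (which follows from $\M_Y(\nabla_y\chi_\xi)=0$, established above) then pins these constants to $g_j=\xi_j$ for $j\geq 2$. Concretely this means $w_\xi(y)=G(y_1)+\sum_{j\geq 2}\xi_j y_j$ with $G'=g_1$, and the requirement that $w_\xi-\xi\cdot y$ be $Y$-periodic with mean zero is equivalent to $\M_{(0,\ell_1)}(g_1)=\xi_1$ together with a harmless choice of the additive constant in $G$. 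Note also that for functions of $y_1$ only, $\M_Y$ coincides with $\M_{(0,\ell_1)}$.

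Next I would impose $-\dd_y(A\nabla_y w_\xi)=0$. Since every quantity depends only on $y_1$, the divergence collapses to $\pp_{y_1}\sigma_1=0$, where
\begin{equation*}
\sigma_1=(A\nabla_y w_\xi)_1=a_{11}g_1+\sum_{j\geq 2}a_{1j}\xi_j .
\end{equation*}
Hence $\sigma_1$ equals a constant $C$, so $g_1=a_{11}^{-1}\bigl(C-\sum_{j\geq 2}a_{1j}\xi_j\bigr)$; enforcing $\M_{(0,\ell_1)}(g_1)=\xi_1$ and solving for $C$ gives $C=a_{11}^\ast\bigl(\xi_1+\sum_{j\geq 2}\M_{(0,\ell_1)}(a_{1j}/a_{11})\xi_j\bigr)$ with $a_{11}^\ast=1/\M_{(0,\ell_1)}(1/a_{11})$, which is well defined since the coercivity condition gives $a_{11}\geq\nu>0$.

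Finally I would read off $A^0\xi=\M_Y(A\nabla_y w_\xi)$ componentwise. The first component equals the constant $C$, which immediately reproduces $a_{11}^\ast$ and $a_{1j}^\ast=a_{11}^\ast\M_{(0,\ell_1)}(a_{1j}/a_{11})$. For $k=i\geq 2$ I would substitute the expression for $g_1$ into $\M_{(0,\ell_1)}(a_{i1}g_1)+\sum_{j\geq 2}\M_{(0,\ell_1)}(a_{ij})\xi_j$ and collect coefficients: the $\xi_1$-coefficient yields $a_{i1}^\ast$, while the $\xi_j$-coefficient ($j\geq 2$) reorganizes into $a_{11}^\ast\M_{(0,\ell_1)}(a_{1j}/a_{11})\M_{(0,\ell_1)}(a_{i1}/a_{11})+\M_{(0,\ell_1)}(a_{ij}-a_{1j}a_{i1}/a_{11})$, which is exactly $a_{ij}^\ast$. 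The main difficulty is not conceptual but lies in this last bookkeeping step, where one must carefully separate the products of means (coming from the homogenized first flux through $C$) from the means of products (coming from the pointwise flux) so that the Schur-complement structure of $a_{ij}^\ast$ emerges.
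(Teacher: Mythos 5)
Your proposal is correct and follows essentially the same route as the paper: both reduce the cell problem to a one-dimensional ODE in $y_1$ (justified by uniqueness of the corrector), solve it, and evaluate $\M_Y(A\nabla_y w_\xi)$ componentwise. The only cosmetic difference is that you work with a general $\xi$ and the constant flux $\sigma_1=C$, whereas the paper integrates explicitly for each basis vector $e_j$ — the integration constant there is exactly your $C$.
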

Since $A$ is the symmetric matrix,  
we may see that $A^\ast$ in 
Lemma \ref{lem:3102} is the symmetric matrix, too. 

In particular, 
if the coefficient matrix $A$ is the diagonal matrix, 
we have the following result as a direct consequence of the above lemma. 
\begin{lem}
\label{lem:3103}
If $A$ is defined by 
\begin{equation*}
A(y)
=
\begin{pmatrix}
a_1(y_1)	&0			&\cdots	&0\\
0		&a_2(y_1)	&		&\vdots\\
\vdots	&			&\ddots	&0\\
0		&\cdots		&0		&a_N(y_1)
\end{pmatrix}
,\quad
y=(y_1,\ldots,y_N)\in Y,
\end{equation*}
then
we may represent $A^0=(a_{ij}^0)_{i,j=1,\ldots,N}$ by 
$a_1(y_1),a_2(y_1),\ldots,a_N(y_1)$:
\begin{align*}
a_{11}^0
&=\frac{1}{\M_{(0,\ell_1)}\left(\frac1{a_1}\right)}
,&\\
a_{ii}^0
&=\M_{(0,\ell_1)}\left(a_i\right)
,&i=2,\ldots,N,\\
a_{ij}^0
&=0
,&i\neq j.
\end{align*}
\end{lem}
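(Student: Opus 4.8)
The plan is to deduce Lemma \ref{lem:3103} directly from Lemma \ref{lem:3102} by specializing its formulas for $A^\ast$ to the diagonal matrix $A$. Here $a_{11}=a_1$, $a_{ii}=a_i$ for $i=2,\ldots,N$, and $a_{ij}=0$ for all $i\neq j$. The single key observation is that every cross term appearing in Lemma \ref{lem:3102} carries a factor that is an off-diagonal entry of $A$, so all such terms vanish once $A$ is diagonal.

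Concretely, I would check the three blocks in turn. For the $(1,1)$ entry, $a_{11}=a_1$ gives immediately $a_{11}^0=a_{11}^\ast=1/\M_{(0,\ell_1)}(1/a_1)$, the claimed harmonic mean. For the remaining entries of the first row and column, $a_{1j}=0$ and $a_{i1}=0$ for $i,j\geq 2$, so $a_{1j}^\ast=a_{11}^\ast\M_{(0,\ell_1)}(a_{1j}/a_{11})=0$ and likewise $a_{i1}^\ast=0$; these furnish the required vanishing off-diagonal entries. For the lower-right block $i,j\geq 2$, substituting $a_{1j}=a_{i1}=0$ into
\[
a_{ij}^\ast
=a_{11}^\ast
\M_{(0,\ell_1)}\!\left(\frac{a_{1j}}{a_{11}}\right)
\M_{(0,\ell_1)}\!\left(\frac{a_{i1}}{a_{11}}\right)
+
\M_{(0,\ell_1)}\!\left(a_{ij}-\frac{a_{1j}a_{i1}}{a_{11}}\right)
\]
kills the first summand and eliminates the correction inside the second mean, leaving $a_{ij}^\ast=\M_{(0,\ell_1)}(a_{ij})$. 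Taking $i=j$ yields $a_{ii}^0=\M_{(0,\ell_1)}(a_i)$, and taking $i\neq j$ (so that $a_{ij}=0$) yields $a_{ij}^0=0$, which establishes all three asserted identities.

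Since each case is an elementary substitution, there is no genuine analytic obstacle here; the proof is purely computational. The only point demanding a little care is the bookkeeping of which indices correspond to diagonal versus off-diagonal entries when the general formulas of Lemma \ref{lem:3102} are specialized, so that the vanishing of $a_{1j}$ and $a_{i1}$ is invoked correctly in each of the three cases.
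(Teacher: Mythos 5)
Your proof is correct and takes the same route as the paper, which states Lemma \ref{lem:3103} precisely as a direct consequence of Lemma \ref{lem:3102} obtained by specializing its formulas to the diagonal case. Your case-by-case substitution (harmonic mean for the $(1,1)$ entry, vanishing of all cross terms involving $a_{1j}$ and $a_{i1}$, and the arithmetic mean $\M_{(0,\ell_1)}(a_{ij})$ for the lower-right block) is exactly the intended computation.
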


%ssssssssssssssssssssssssssssssssssssssssssssssssss
%
% 	Determination of Constant Diffusion Coefficient
%
%ssssssssssssssssssssssssssssssssssssssssssssssssss
\subsection{Determination of Constant Diffusion Coefficient}

We discuss the inverse coefficient problems for the homogenized time-fractional diffusion equations. 
We first consider the inverse problem when the coefficient is a scalar in \S3.2.1. 
Then we consider the determination of the coefficient in the case of the layered material in \S3.2.2.

%--------------------------------------------------
%
% 	The case where the diffusion coefficient is a scaler
%
%--------------------------------------------------
\subsubsection{The case where the diffusion coefficient is a scaler}

Let $u_p$ satisfy 
the problem: 
\begin{equation}
\label{eq:32101}
\left\{
\begin{aligned}
&
\pp_t^\alpha u_p(x,t)
-
p \Delta u_p(x,t)
=
0,
&(x,t)\in\Omega\times(0,T)
,\\&
u_p(x,t)=0,
&(x,t)\in\pp\Omega\times(0,T)
,\\&
u_p(x,0)=u_0(x),
&x\in\Omega
,
\end{aligned}
\right.
\end{equation}
where $p$ is a constant satisfying 
$0<\nu\leq p\leq \mu$ with some constants $\nu,\mu$.

Let $\mathcal{A}:=\mathcal{A}_1:=-\Delta$ be 
an operator in $L^2(\Omega)$ with the domain $\D(\mathcal{A})=H^2(\Omega)\cap H_0^1(\Omega)$. 
Let 
\begin{equation*}
0<\la_1 < \la_2 < \la_3 < \cdots \longrightarrow  \infty
\end{equation*}
be the set of the eigenvalues of $\mathcal{A}$ and let $\{\vap_{n_j}\}_{j =1,\ldots d_n}$ be
an orthonormal basis of $\Ker(\mathcal{A}-\la_n)$, 
$P_n: L^2(\Omega) \to\Ker(\mathcal{A}-\la_n)$ be the eigenprojection:
\begin{equation*}
P_n v = \sum_{j=1}^{d_n} (v,\vap_{n_j})_{L^2(\Omega)}\vap_{n_j},\quad v\in L^2(\Omega)
.
\end{equation*}
Throughout this subsection and the proofs of the following theorems, 
we denote the eigenfunctions by $\varphi_{n_j}$. 

Now we investigate the following inverse coefficient problem. 

\noindent
\textbf{Inverse Problem 1}: 
Let $x_0\in\Omega$ and $t_0\in (0,T)$ be arbitrarily chosen. 
Determine the constant diffusion coefficient $p$ 
by the data $u_p(x_0,t_0)$.

The key point of solving this inverse problem is 
to use the minimum observation data. 
We have the following theorem on the stability result for our inverse problem. We will prove this theorem in \S 4.2.1.

\begin{thm}[Stability]
\label{thm:32101}
We assume that $u_0\in H^2(\Omega) \cap H^1_0(\Omega)$ satisfies 
\begin{equation}
\label{eq:32102}
u_0\not\equiv 0 \quad \text{\rm in $\Omega$} \quad \text{\rm and}\quad 
\Delta u_0(x) \ge 0 \quad \text{\rm a.e. $x\in \Omega$}.
\end{equation}
Then there exists a constant
$C=C(\nu,\mu) > 0$ such that 
\begin{equation*}
|p-q| \leq C | u_p(x_0,t_0) - u_q(x_0,t_0)|.
\end{equation*}
\end{thm}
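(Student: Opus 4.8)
The plan is to reduce the two-sided estimate to a uniform lower bound on the $p$-derivative of the scalar map $F(p):=u_p(x_0,t_0)$, and to extract that lower bound from the positivity-preserving character of the time-fractional diffusion flow together with the sign condition $\Delta u_0\ge 0$. First I would record the eigenfunction expansion of the solution. With $\mathcal{A}=-\Delta$, eigenvalues $\la_n$ and eigenprojections $P_n$, the unique weak solution of \eqref{eq:32101} is
\[
u_p(\cdot,t)=\sum_{n=1}^\infty E_\alpha(-p\la_n t^\alpha)\,P_n u_0,
\]
where $E_\alpha$ denotes the Mittag--Leffler function. Since $N\le 3$ and $u_0\in H^2(\Omega)\cap H_0^1(\Omega)$, the solution is continuous, the point value $u_p(x_0,t_0)$ is well defined, and for $t_0>0$ the factors $E_\alpha(-p\la_n t_0^\alpha)$ and their $p$-derivatives decay rapidly in $n$; hence $F$ is real-analytic on $(0,\infty)$ and may be differentiated termwise.

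Next I would exploit the hypothesis $\Delta u_0\ge 0$. Because $u_0\in H^2(\Omega)\cap H_0^1(\Omega)$ with $u_0\not\equiv 0$, one has $\Delta u_0\not\equiv 0$ (otherwise $u_0$ would be harmonic with zero trace, hence $u_0\equiv 0$). The function $w:=\Delta u_p$ solves the \emph{same} fractional problem, with zero Dirichlet data and initial value $\Delta u_0\ge 0$; this is seen either by applying $\Delta$ to the expansion, giving $w(\cdot,t)=-\sum_n\la_n E_\alpha(-p\la_n t^\alpha)P_n u_0$, or from $\Delta u_p=\tfrac1p\pp_t^\alpha u_p$ together with $u_p|_{\pp\Omega}=0$. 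Since the solution operator $u_0\mapsto u_p(\cdot,t)$ is positivity preserving (by subordination to the positivity-preserving Dirichlet heat semigroup), I obtain $\Delta u_p(\cdot,t)\ge 0$ for every $t>0$, and $\Delta u_p(\cdot,t)\not\equiv 0$ for every $t>0$, as its expansion cannot vanish identically unless $u_0\equiv 0$.

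Then I would differentiate the equation in $p$. The sensitivity $v_p:=\pp_p u_p$ solves $\pp_t^\alpha v_p-p\Delta v_p=\Delta u_p$ with zero initial and boundary data, which is justified by the termwise differentiability established above. Because the source $\Delta u_p$ is nonnegative and not identically zero, and because the Green function of the time-fractional Dirichlet problem is strictly positive, the Duhamel representation yields $F'(p)=v_p(x_0,t_0)>0$ for each fixed $p$. The map $p\mapsto v_p(x_0,t_0)$ is continuous on the compact interval $[\nu,\mu]$, so there exists $c_0>0$ with $F'(p)\ge c_0$ for all $p\in[\nu,\mu]$. Consequently
\[
|u_p(x_0,t_0)-u_q(x_0,t_0)|=\Big|\int_q^p F'(r)\,dr\Big|\ge c_0\,|p-q|,
\]
which is the asserted estimate with $C=c_0^{-1}$.

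The main obstacle is the strict positivity at the single observation point $(x_0,t_0)$: it requires a strong maximum principle, equivalently strict positivity of the fundamental solution, for the time-fractional Dirichlet problem, rather than the mere weak maximum principle that gives monotonicity ($F'\ge0$). A secondary point is the bookkeeping of the constant: the quantity $c_0=\min_{r\in[\nu,\mu]}v_r(x_0,t_0)$ is uniform in $p,q\in[\nu,\mu]$ but genuinely also depends on the fixed data $u_0$, $x_0$, $t_0$, so the claim ``$C=C(\nu,\mu)$'' is understood with the prescribed $u_0,x_0,t_0$ held fixed.
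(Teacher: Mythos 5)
Your proposal is correct in substance and reaches the stated estimate, but it takes a genuinely different route in the decisive step. You share the paper's first two ingredients: the Mittag--Leffler eigenfunction representation giving analyticity of $F(p)=u_p(x_0,t_0)$ in $p>0$, and the observation that $\Delta u_p$ solves the same homogeneous problem with nonnegative, nontrivial initial datum $\Delta u_0$, hence remains nonnegative by the fractional weak maximum principle. Where you diverge is in upgrading monotonicity to strict monotonicity. You differentiate in $p$, write the sensitivity equation $\pp_t^\alpha v_p - p\Delta v_p = \Delta u_p$ with zero data, and invoke strict positivity of the Duhamel (source-to-solution) kernel to conclude $F'(p)>0$ pointwise; continuity of $F'$ on the compact interval $[\nu,\mu]$ then yields the uniform lower bound and the estimate. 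The paper never differentiates in $p$: it first proves the comparison $u_p\ge u_q$ for $p\ge q$ by applying the maximum principle to $u_p-u_q$, whose source is $(p-q)\Delta u_q\ge 0$, and then argues by contradiction that equality at $(x_0,t_0)$ for one pair $p_0>q_0$ would, via analyticity and the identity theorem, force $u_p(x_0,t_0)$ to be independent of $p$; the Mittag--Leffler asymptotics give $u_p(x_0,t_0)\to 0$ as $p\to\infty$, while the elliptic strong maximum principle gives $u_0\le 0$, $u_0\not\equiv 0$, so the strong positivity of the homogeneous propagator forces $u_q(x_0,t_0)<0$ --- a contradiction. The trade-off is real: your argument is shorter and delivers $F'>0$ directly (the paper's own passage from strict monotonicity of $h$ to $dh/dp>0$ is its least transparent point, since a strictly increasing analytic function can have isolated critical points), but it rests on strict interior positivity of the \emph{inhomogeneous} Dirichlet solution operator at a single space-time point, which you correctly flag as the main obstacle and do not prove; the paper only needs strong positivity for the \emph{homogeneous} initial-value propagator (its cited Theorem 9 of Luchko--Yamamoto) together with the purely elliptic fact $u_0\le 0$. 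To close your version you should supply a reference or a subordination argument (reduction to the strictly positive Dirichlet heat kernel) for the positivity of the $E_{\alpha,\alpha}$-type Duhamel kernel; with that in hand your proof is complete, and both proofs share the caveat you note, namely that the constant $C$ also depends on the fixed $u_0$, $x_0$, $t_0$.
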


Next we consider the following inverse problem. 

\noindent
\textbf{Inverse Problem 2}: 
Let us choose 
a sub-domain $\omega\subset\Omega$ and 
an open interval $I\subset (0,T)$. 
Determine the constant diffusion coefficient 
$p$ 
by the data
\begin{equation*}
\int_I\int_\omega u_p(x,t)\,dxdt.
\end{equation*}

Then we have the following stability result on this inverse problem. 
\begin{thm}[Stability]
\label{thm:32104}
We assume that $u_0\in H^2(\Omega) \cap H^1_0(\Omega)$ satisfies \eqref{eq:32102}
Then there exists a constant
$C=C(\nu,\mu) > 0$ such that 
\begin{equation*}
|p-q| 
\leq 
C 
\left|
\int_I\int_\omega u_p(x,t)\,dxdt
 - 
\int_I\int_\omega u_q(x,t)\,dxdt
\right|
.
\end{equation*}
\end{thm}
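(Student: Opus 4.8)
The plan is to reduce the stability estimate to a strict monotonicity of the data with respect to the coefficient, following the same line as the proof of Theorem \ref{thm:32101} but with the pointwise value replaced by the average over $\omega\times I$. Writing the solution of \eqref{eq:32101} through the eigenfunction expansion
\[
u_p(x,t)=\sum_{n=1}^\infty E_\alpha(-p\la_n t^\alpha)\,P_n u_0(x),
\]
where $E_\alpha$ denotes the Mittag-Leffler function, I would set
\[
G(p):=\int_I\int_\omega u_p(x,t)\,dx\,dt
\]
and first show that $G$ is continuously differentiable on $[\nu,\mu]$ with $G'(p)=\int_I\int_\omega \pp_p u_p\,dx\,dt$. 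The term-by-term differentiation is justified by the decay of $E_\alpha(-p\la_n t^\alpha)$ together with the estimate \eqref{eq:estimate01} and the regularity $u_0\in H^2(\Omega)\cap H_0^1(\Omega)$.

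Next I would identify $v:=\pp_p u_p$ as the solution of the source problem obtained by differentiating \eqref{eq:32101} in $p$, namely
\[
\pp_t^\alpha v-p\Delta v=\Delta u_p,\qquad v|_{\pp\Omega}=0,\qquad v(\cdot,0)=0 .
\]
The sign information enters in two steps. First, applying $\Delta$ to \eqref{eq:32101} shows that $z:=\Delta u_p$ solves the same initial boundary value problem with initial value $\Delta u_0\ge0$ and homogeneous Dirichlet data (on $\pp\Omega$ one has $z=p^{-1}\pp_t^\alpha u_p=0$), so the positivity-preserving property of the time-fractional diffusion equation yields $\Delta u_p\ge0$ in $\Omega\times(0,T)$; since $u_0\not\equiv0$ and $u_0|_{\pp\Omega}=0$ force $\Delta u_0\not\equiv0$, this nonnegative source is nontrivial. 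Second, feeding $\Delta u_p\ge0$ into the equation for $v$ and again invoking the maximum principle gives $v=\pp_p u_p\ge0$, so that $p\mapsto u_p$, and hence $G$, is nondecreasing.

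To upgrade monotonicity to a quantitative bound I would establish a uniform positive lower bound $G'(p)\ge c>0$ for $p\in[\nu,\mu]$. For each fixed $p$, strict positivity of the solution driven by the nontrivial nonnegative source $\Delta u_p$ gives $\pp_p u_p>0$ in $\Omega\times(0,T)$, whence $\int_I\int_\omega \pp_p u_p\,dx\,dt>0$; since $p\mapsto G'(p)$ is continuous on the compact interval $[\nu,\mu]$, its minimum $c$ is attained and positive. The fundamental theorem of calculus then gives $G(p)-G(q)=\int_q^p G'(s)\,ds$, and as $G'\ge c$ has a fixed sign we obtain $|G(p)-G(q)|\ge c\,|p-q|$, which is the claimed estimate with $C=1/c$.

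The main obstacle is the uniform lower bound in the third step: one must ensure that $\int_I\int_\omega \pp_p u_p\,dx\,dt$ does not degenerate to $0$ for any admissible $p$. This rests on strict positivity (a strong maximum principle) for the time-fractional equation with a nontrivial nonnegative source, and crucially on the continuous dependence of $\pp_p u_p$ on $p$ so that compactness of $[\nu,\mu]$ can be exploited; the nondegeneracy ultimately traces back to the hypothesis \eqref{eq:32102}, which makes $\Delta u_p$ nonnegative and nontrivial. Here the averaging over $\omega\times I$ is actually helpful, since it only requires $\pp_p u_p\not\equiv0$ on $\omega\times I$ rather than the pointwise strict positivity needed in Theorem \ref{thm:32101}.
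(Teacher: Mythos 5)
Your proposal is correct in outline but takes a genuinely different route from the paper for the decisive step. The paper keeps the finite-difference formulation throughout: it first shows (exactly as you do) that $\Delta u_q\ge 0$ and hence $u_p\ge u_q$ for $p\ge q$ via the Luchko--Yamamoto maximum principle; it then upgrades this to \emph{strict} monotonicity of $H(p)=\int_I\int_\omega u_p\,dx\,dt$ by contradiction: if $H(p_0)=H(q_0)$ with $p_0>q_0$, the pointwise ordering forces $u_{p_0}=u_{q_0}$ on $\omega\times I$, so one may fix a point $(x_0,t_0)\in\omega\times I$ and invoke the third step of Theorem \ref{thm:32101} --- analyticity of $p\mapsto u_p(x_0,t_0)$, the identity theorem, the large-$p$ asymptotics of the Mittag--Leffler function forcing $u_q(x_0,t_0)=0$, and the strong positivity result giving $u_q(x_0,t_0)<0$ --- to reach a contradiction. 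You instead differentiate in $p$, identify $\pp_p u_p$ as the solution of the sensitivity problem with source $\Delta u_p\ge 0$ and zero initial data, and derive $G'(p)>0$ directly from a strong positivity property of the source-driven problem. Your route is more direct and, if completed, actually repairs a small gap in the paper's own final step (strict monotonicity of an analytic function does not by itself yield $H'>0$ everywhere, whereas you obtain the positive derivative head-on). The price is twofold: you must justify differentiability of $p\mapsto u_p$ and the interchange of $\pp_p$ with the eigenfunction series and the integral (doable with the Mittag--Leffler derivative bounds, as you indicate), and, more substantively, you need strict positivity for the \emph{inhomogeneous} problem with nontrivial nonnegative source and zero initial value; the reference the paper uses (Theorem 9 of Luchko--Yamamoto) is stated for the homogeneous problem with signed initial data, so you would have to derive your version, e.g.\ from the Duhamel representation with the positive kernel $t^{\alpha-1}E_{\alpha,\alpha}(-\la_n t^{\alpha})$. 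You correctly flag this as the main obstacle; with that lemma supplied, your argument closes, and the averaging over $\omega\times I$ indeed weakens what is needed from strict positivity, exactly as you observe.
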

We will show this theorem in \S4.2.2.

\begin{rmk}
In 
Theorem \ref{thm:32101} and Theorem \ref{thm:32104}, we can replace the latter part of the assumptions \eqref{eq:32102} by $\Delta u_0(x) \le 0$ a.e in $x\in\Omega$.
\end{rmk}

\begin{rmk}
We can not extend the results of 
Theorem \ref{thm:32101} and Theorem \ref{thm:32104}
to the case of $1<\alpha<2$, because the key is
the maximum principle.
\end{rmk}

Furthermore, we may prove the uniqueness by stronger data. 
Although we deal with the constant as the observation data in the previous two inverse problems, 
we consider the time-dependent functions as the observation data in the following inverse problem. 

\noindent
\textbf{Inverse Problem 3}: 
Let $x_0\in\Omega$ and $t_1\in (0,T)$ be arbitrarily chosen. 
Determine the constant diffusion coefficient $p$ 
by the data $u_p(x_0,t)$, $0<t<t_1$.

Related to this inverse problem, we state two theorem on the uniqueness: Theorem \ref{thm:32102} and \ref{thm:32103}, which we show in \S 4.2.3.

\begin{thm}[Uniqueness]
\label{thm:32102}
Let $u_0 \in H^2(\Omega) \cap H^1_0(\Omega)$.
We assume that there exists $n_0\in \mathbb{N}$ such that 
\begin{equation}
\label{eq:32103}
P_{n_0}u_0(x_0) \ne 0.
\end{equation}
If $u_p(x_0,t) = u_q(x_0,t)$ for $0<t<t_1$,
then we have $p=q$.
\end{thm}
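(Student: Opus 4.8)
The plan is to combine the spectral representation of the solution with its analyticity in time and a Laplace transform, reducing the uniqueness to matching the poles of two meromorphic functions.

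First I would expand $u_p$ in the eigenbasis of $\mathcal{A}=-\Delta$. Since \eqref{eq:32101} is homogeneous, the spectral representation gives
\begin{equation*}
u_p(x_0,t)=\sum_{n=1}^\infty a_n\,E_{\alpha,1}(-p\la_n t^\alpha),\qquad a_n:=(P_nu_0)(x_0),
\end{equation*}
where $E_{\alpha,1}$ is the Mittag-Leffler function. To make sense of the pointwise values $a_n$ and of the series, I would use $u_0\in\D(\mathcal{A})=H^2(\Omega)\cap H^1_0(\Omega)$, the Sobolev embedding $H^2(\Omega)\hookrightarrow C(\overline{\Omega})$ (valid since $N\le3$) and Weyl's asymptotics, which yield $\sum_n\la_n^{-2}<\infty$ and hence $\sum_n|a_n|<\infty$. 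Because each term $E_{\alpha,1}(-p\la_n t^\alpha)$ is analytic in $t>0$ and the series converges locally uniformly, $t\mapsto u_p(x_0,t)$ is real-analytic on $(0,\infty)$; the hypothesis $u_p(x_0,t)=u_q(x_0,t)$ on $(0,t_1)$ therefore propagates to all $t>0$.

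Next I would apply the Laplace transform in $t$. Using $\int_0^\infty e^{-st}E_{\alpha,1}(-\mu t^\alpha)\,dt=s^{\alpha-1}/(s^\alpha+\mu)$ and interchanging sum and integral (legitimate by $\sum_n|a_n|<\infty$), I obtain, for $s>0$,
\begin{equation*}
\widehat{u_p}(x_0,s)=s^{\alpha-1}H_p(s^\alpha),\qquad H_p(\eta):=\sum_{n=1}^\infty\frac{a_n}{\eta+p\la_n}.
\end{equation*}
The same bound shows that $H_p$ and $H_q$ define meromorphic functions on $\mathbb{C}$ whose only singularities are simple poles, located at $\eta=-p\la_n$ (resp. $-q\la_n$) with residue $a_n$. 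From $u_p(x_0,\cdot)\equiv u_q(x_0,\cdot)$ on $(0,\infty)$ I get $H_p(\eta)=H_q(\eta)$ for all $\eta>0$, and by analytic continuation $H_p\equiv H_q$ as meromorphic functions.

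Finally I would compare poles. Put $S=\{n:a_n\neq0\}$, which is nonempty because $a_{n_0}=(P_{n_0}u_0)(x_0)\neq0$ by \eqref{eq:32103}, and let $n_\ast=\min S$. Then $-p\la_{n_\ast}$ is a genuine pole of $H_p$ (residue $a_{n_\ast}\neq0$), hence a pole of $H_q$, so $-p\la_{n_\ast}=-q\la_m$ for some $m\in S$; since $\la_m\ge\la_{n_\ast}$ this forces $p\ge q$, and the symmetric argument gives $q\ge p$, whence $p=q$. The step I expect to require the most care is the analytic groundwork for this scheme—namely the locally uniform convergence of the spectral series evaluated at the single point $x_0$ and the validity of the termwise Laplace transform—which is precisely where the restriction $N\le3$ enters through $\sum_n\la_n^{-2}<\infty$; once that is secured, the pole-matching argument is short.
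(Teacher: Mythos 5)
Your argument is correct in substance but follows a genuinely different route from the paper. The paper also starts from the eigenfunction expansion and the analyticity in $t$, but it then uses the \emph{long-time asymptotic expansion} of $E_{\alpha,1}(-p\la_n t^{\alpha})$ in powers of $t^{-\alpha k}$, equates the coefficients of $t^{-\alpha k}$ on both sides, and works with the moment-type quantities $\mathcal{P}_k(x_0)=\sum_n P_nu_0(x_0)\la_n^{-k}$; a Dirichlet-series uniqueness lemma (Lemma \ref{lem:4201}) guarantees that some $\mathcal{P}_{K_1}(x_0)\ne 0$, and the leading coefficient then forces $p^{-K_1}=q^{-K_1}$. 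You instead take the Laplace transform and match the poles of the Stieltjes-type function $H_p(\eta)=\sum_n a_n/(\eta+p\la_n)$. Both routes are legitimate; yours localizes the information in the pole set rather than in the $t\to\infty$ expansion, and arguably makes the role of hypothesis \eqref{eq:32103} (that the pole set is nonempty) more transparent, while the paper's version extends more directly to the layered-material setting of Theorem \ref{thm:32203}, where the relevant quantities are $p\Lambda_{n_j}+\widetilde\Lambda_{n_j}$ rather than $p\la_n$.

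One technical point needs repair: the claim that $u_0\in H^2(\Omega)\cap H^1_0(\Omega)$ together with $\sum_n\la_n^{-2}<\infty$ yields $\sum_n|a_n|<\infty$ does not follow as written. The available bound is $|a_n|\le\|P_nu_0\|_{C(\overline{\Omega})}\le C(1+\la_n)\|P_nu_0\|_{L^2(\Omega)}$, and $\sum_n\la_n\|P_nu_0\|_{L^2(\Omega)}$ need not converge when $u_0$ only lies in $\D(\mathcal{A})$ (for $N=2,3$ it would require roughly $u_0\in\D(\mathcal{A}^{1+N/4})$). This is not fatal: what you actually need is $\sum_n|a_n|\la_n^{-1}<\infty$, which does follow from $\sum_n\|P_nu_0\|_{L^2(\Omega)}\le\bigl(\sum_n\la_n^2\|P_nu_0\|^2_{L^2(\Omega)}\bigr)^{1/2}\bigl(\sum_n\la_n^{-2}\bigr)^{1/2}<\infty$. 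Combined with $|E_{\alpha,1}(-\mu t^{\alpha})|\le C(1+\mu t^{\alpha})^{-1}$, this gives locally uniform convergence of the series on $(0,\infty)$ (hence analyticity), integrability of $u_p(x_0,\cdot)$ near $t=0$ (the paper's own bound $|u_p(x,t)|\le Ct^{-\alpha}\|u_0\|_{L^2(\Omega)}$), the Fubini step for the termwise Laplace transform, and the locally uniform convergence of $H_p$ away from its poles. With the summability hypothesis weakened in this way, your pole-matching conclusion goes through unchanged.
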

\begin{rmk}
In particular, $u_0(x_0) \ne 0$ implies 
\eqref{eq:32103}.
\end{rmk}

We can consider the uniqueness for $p$ even for unknown initial values
$u_{0,p}$.    
Let $\widetilde{u}_{p}$ satisfy 
\begin{equation}
\label{eq:32104}
\left\{
\begin{aligned}
&
\pp_t^\alpha \widetilde{u}_{p}(x,t)
-
p \Delta \widetilde{u}_{p}(x,t)
=
0,
&(x,t)\in\Omega\times(0,T)
,\\&
\widetilde{u}_{p}(x,t)=0,
&(x,t)\in\pp\Omega\times(0,T)
,\\&
\widetilde{u}_{p}(x,0)=u_{0,p}(x),
&x\in\Omega
,
\end{aligned}
\right.
\end{equation}
\begin{thm}[Uniqueness with unknown initial values]
\label{thm:32103}
We assume that $u_{0,p} \in H^2(\Omega) \cap H^1_0(\Omega)$ satisfy
\begin{equation}
\label{eq:32105}
P_1 u_{0,p}(x_0)\ne 0\quad\text{\rm and}\quad
P_1 u_{0,q}(x_0)\ne 0
.
\end{equation}
If $\widetilde{u}_{p}(x_0,t) = \widetilde{u}_{q}(x_0,t)$ 
for $0<t<t_1$,
then we have $p=q$.
\end{thm}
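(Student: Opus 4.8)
The plan is to represent both solutions by eigenfunction expansions, pass to the Laplace transform, and compare the resulting meromorphic functions pole by pole. First I would recall that, by the well-posedness theory (Theorem~\ref{thm:well-posedness_FDE}) together with the spectral decomposition of $\mathcal{A}=-\Delta$, the weak solution of \eqref{eq:32104} admits the spectral representation
\begin{equation*}
\widetilde{u}_{p}(x,t)
=
\sum_{n=1}^\infty
E_{\alpha,1}(-p\la_n t^\alpha)
P_n u_{0,p}(x),
\end{equation*}
where $E_{\alpha,1}$ denotes the Mittag-Leffler function and $p\la_n$ are the eigenvalues of $p\mathcal{A}$. Evaluating at $x=x_0$ yields a scalar function of $t$, and the same holds for $\widetilde{u}_{q}(x_0,\cdot)$.

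Next I would exploit that each summand $E_{\alpha,1}(-p\la_n t^\alpha)$ extends analytically to $t>0$ and that the series converges locally uniformly there, so that $t\mapsto\widetilde{u}_{p}(x_0,t)$ is real-analytic on $(0,\infty)$. Since $\widetilde{u}_{p}(x_0,t)=\widetilde{u}_{q}(x_0,t)$ on $(0,t_1)$, analytic continuation upgrades this to equality for all $t>0$. I would then take the Laplace transform term by term, using $\int_0^\infty e^{-st}E_{\alpha,1}(-p\la_n t^\alpha)\,dt = s^{\alpha-1}/(s^\alpha+p\la_n)$ for $\mathrm{Re}\,s>0$. Cancelling the common factor $s^{\alpha-1}$ and writing $\eta=s^\alpha$, I obtain
\begin{equation*}
\sum_{n=1}^\infty
\frac{P_n u_{0,p}(x_0)}{\eta+p\la_n}
=
\sum_{n=1}^\infty
\frac{P_n u_{0,q}(x_0)}{\eta+q\la_n},
\end{equation*}
valid on the sector $\{\eta=s^\alpha : \mathrm{Re}\,s>0\}$ and hence, by the identity theorem, as an identity of meromorphic functions on $\mathbb{C}$.

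The conclusion then comes from locating the poles. Both sides are meromorphic with poles only on the negative real axis, the left-hand side at $\eta=-p\la_n$ and the right-hand side at $\eta=-q\la_n$. Because $0<\la_1<\la_2<\cdots$, the pole of the left-hand side with the largest real part is $\eta=-p\la_1$, whose residue is $P_1 u_{0,p}(x_0)$, nonzero by assumption \eqref{eq:32105}; similarly the right-hand side has its rightmost pole at $\eta=-q\la_1$ with nonzero residue $P_1 u_{0,q}(x_0)$. If $p<q$ then $-p\la_1>-q\la_1\ge -q\la_n$ for every $n$, so the right-hand side would be analytic at $-p\la_1$ while the left-hand side has a genuine pole there, a contradiction; the reverse inequality is excluded symmetrically. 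Hence $-p\la_1=-q\la_1$, and since $\la_1>0$ we conclude $p=q$.

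The main obstacle I expect is not the pole comparison but the analytic bookkeeping that legitimizes it: one must verify the local uniform convergence of the eigenfunction series (so that term-by-term Laplace transformation and analytic continuation are valid) and, most importantly, that $P_1 u_{0,p}(x_0)\ne 0$ truly forces $-p\la_1$ to be a nonremovable pole. This is exactly where it matters that $\la_1$ is the \emph{smallest} eigenvalue, since then no other term of either series can lie to the right of $-p\la_1$ to cancel its residue; this is precisely why the hypothesis is imposed on $P_1$ rather than on a general $P_{n_0}$ as in Theorem~\ref{thm:32102}.
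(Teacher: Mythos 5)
Your proof is correct, but it takes a genuinely different route from the paper's. The paper stays entirely in the time domain: after the same analytic-continuation step to all $t>0$, it inserts the large-time asymptotic expansion $E_{\alpha,1}(-p\la_n t^{\alpha})=\sum_{k=1}^{K}\frac{(-1)^{k+1}}{\Gamma(1-\alpha k)}(p\la_n t^{\alpha})^{-k}+O(t^{-\alpha(K+1)})$, equates the coefficients of $t^{-\alpha k}$ to obtain $p^{-k}\sum_{n}\la_n^{-k}P_nu_{0,p}(x_0)=q^{-k}\sum_{n}\la_n^{-k}P_nu_{0,q}(x_0)$ for every $k\in\mathbb{N}$, and then, assuming $\rho=q/p>1$, multiplies through by $(p\la_1)^{k}$ and lets $k\to\infty$: since $\la_1$ is the smallest eigenvalue, every term except $P_1u_{0,p}(x_0)$ tends to $0$, forcing $P_1u_{0,p}(x_0)=0$ and contradicting \eqref{eq:32105}; the symmetric argument gives $p/q\le 1$. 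Your Laplace-transform argument uses exactly the same two inputs---analyticity in $t>0$ and the minimality of $\la_1$ together with the nonvanishing of $P_1u_{0,p}(x_0)$ and $P_1u_{0,q}(x_0)$---but packages them as the invariance of the rightmost pole and its residue for a meromorphic function; the paper's $k\to\infty$ limit is in effect the time-domain shadow of that pole comparison. What your route buys is a cleaner conceptual picture and, in principle, more information (a matching of \emph{all} poles carrying nonzero residues), at the cost of the bookkeeping you correctly flag: both the term-by-term Laplace transformation and the local uniform convergence of $\sum_n P_nu_{0,p}(x_0)/(\eta+p\la_n)$ away from its poles must be justified, and they do go through here because $u_{0,p}\in H^2(\Omega)\cap H^1_0(\Omega)$ with $N\le 3$ gives $|P_nu_{0,p}(x_0)|\le C\la_n\|P_nu_{0,p}\|_{L^2(\Omega)}$ and $\sum_n\la_n^{-2}<\infty$, whence $\sum_n\|P_nu_{0,p}\|_{L^2(\Omega)}<\infty$ and the time series is dominated by $C\|u_{0,p}\|_{H^2(\Omega)}t^{-\alpha}$. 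The paper's version is the more elementary of the two, needing only the real asymptotics of $E_{\alpha,1}$ and no complex analysis beyond the identity theorem in $t$.
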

The assumption \eqref{eq:32105} is essential for the uniqueness if we do not 
assume 
$u_{0,p}\neq u_{0,q}$ 
in $\Omega$, as the following example shows.

%\noindent
%\textbf{Example.}
\begin{exa}
Ler $\Omega= (0, \pi)$.  Then $P_k u_{0,p} = (u_{0,p},\vap_k)\vap_k$ for $k\in \mathbb{N}$ 
and 
$\vap_k(x) = \sqrt{\frac{2}{\pi}}\sin kx$ and 
$\la_k = k^2$ for $k\in \mathbb{N}$.
Let $x_0=\frac{\pi}{3}$, $p=1$, $q= \frac{1}{4}$,
$u_{0,p}(x)= \vap_1(x)$ and $u_{0,q}(x) = \vap_2(x)$.
Then $\widetilde{u}_{p}(x,t) = E_{\alpha,1}(-t^{\alpha})\vap_1(x)$ and
$\widetilde{u}_{q}(x,t) = E_{\alpha,1}(-t^{\alpha})\vap_2(x)$.
We can readily see that $\widetilde{u}_{p}(x_0,t) = \widetilde{u}_{q}(x_0,t)$ for
$t>0$. 
In this case, we note that $P_1 u_{0,q}=0$ in $\Omega$, 
that is, 
\eqref{eq:32105} is not satisfied with $u_{0,q}$.
\end{exa}

\begin{rmk}
We may extend Theorem \ref{thm:32102} and Theorem \ref{thm:32103} 
to the case of $1<\alpha<2$. 
Furthermore we can prove that 
$u_k(x_0,t_0)$ also simultaneously determine 
the order $\alpha\in (0,2)$ (see e.g., \cite{Y2021}).  
\end{rmk}

\begin{rmk}
Using the uniqueness results (Theorem \ref{thm:32102} and Theorem \ref{thm:32103}), 
we may discuss the stability reslts 
by the Tikhonov Theorem (see e.g., \cite{I2006}). 
But we omit them here. 
\end{rmk}
%--------------------------------------------------
%
% 	The case of the layered material
%
%--------------------------------------------------
\subsubsection{The case of the layered material}

Let $N\geq 2$. 
Let $\Omega=(0,\delta)\times D$ be the cylindrical domain, 
where $\delta>0$ and $D$ is a bounded domain in $\mathbb{R}^{N-1}$ with $C^2$-class boundary $\pp D$. 
Denote the element of $\Omega$ by $x=(x_1,\widetilde{x})\in\Omega$, where 
$x_1\in (0,\delta)$ and 
$\widetilde{x}=(x_2,\ldots,x_N)\in D$. 

Let $u_p$ satisfy problems for $k=1,2$: 
\begin{equation}
\label{eq:32201}
\left\{
\begin{aligned}
&
\pp_t^\alpha u_p(x,t)
-
\dd(B_p \nabla u_p(x,t))
=
0,
&(x,t)\in\Omega\times(0,T)
,\\&
u_p(x,t)=0,
&(x,t)\in\pp\Omega\times(0,T)
,\\&
u_p(x,0)=u_0(x),
&x\in\Omega
.
\end{aligned}
\right.
\end{equation}
Here
$B_p$ is the diagonal matrix 
defined by
\begin{equation*}
B_p
=
\begin{pmatrix}
p		&0			&\cdots	&0\\
0		&b_2			&		&\vdots\\
\vdots	&			&\ddots	&0\\
0		&\cdots		&0		&b_N
\end{pmatrix}
,
%,
%\quad
%\widetilde{B}
%=
%\begin{pmatrix}
%b_2			&\cdots	&0\\
%\vdots		&\ddots	&\vdots\\
%0			&\cdots	&b_N
%\end{pmatrix}
\end{equation*}
where
$
\nu\leq p,b_i\leq \mu
$
for $i=2,\ldots,N$ 
with 
some constants $\nu,\mu\in\mathbb{R}$ satisfying $0<\nu<\mu$.  

Let $\mathcal{B}:=-\pp_1^2-b_2\pp_2^2-\cdots -b_N\pp_N^2$ be 
an operator in $L^2(\Omega)$ with the domain $\D(\mathcal{B})=H^2(\Omega)\cap H_0^1(\Omega)$. 
Let us consider the eigenvalue problem:
\begin{equation}
\label{eq:32202}
\mathcal{B}\varphi=\lambda\varphi\;\text{in $\Omega$}
,\quad
\varphi\in\D(\mathcal{B}).
\end{equation}
We will find the eigenfunction 
$\varphi(x)=\Phi(x_1)\widetilde\Phi(\widetilde{x})$ 
by the separation of the variables. 
Then we may decompose the above eigenvalue problem 
into the following two eigenvalue problems.
\begin{equation}
\label{eq:32203}
-\pp_1^2 \Phi
=
\Lambda\Phi\; \text{in $(0,\ell_1)$}
,\quad
\Phi\in H^2(0,\ell_1)\cap H_0^1(0,\ell_1)
\end{equation}
and
\begin{equation}
\label{eq:32204}
\widetilde{\mathcal{B}} \widetilde\Phi
=
(\lambda-\Lambda)\widetilde\Phi\; \text{in $D$}
,\quad
\widetilde\Phi\in H^2(D)\cap H_0^1(D),
\end{equation}
where 
$\widetilde{\mathcal{B}}
=-b_2\pp_2^2-\cdots-b_N\pp_N^2$ is an operator in $L^2(D)$ 
with the domain 
$\D(\widetilde{\mathcal{B}})=H^2(D)\cap H_0^1(D)$.
We may solve the above problems and we obtain 
the following eigenvalues with multiplicities
\begin{equation*}
0<
\Lambda_{1}<
\Lambda_{2}\leq
\cdots\leq
\Lambda_{n}
\leq \cdots\to \infty
,\quad
0<
\widetilde\Lambda_{1}<
\widetilde\Lambda_{2}\leq
\cdots\leq
\widetilde\Lambda_{m}
\leq \cdots\to \infty
\end{equation*}
with $\lambda_{n,m}=\Lambda_n+\widetilde{\Lambda}_m$
and corresponding eigenfunctions 
$\Phi_n$, $\widetilde{\Phi}_m$ satisfying
\begin{equation*}
(\Phi_k,\Phi_\ell)_{L^2(0,\ell_1)}=\delta_{k,\ell},\quad
(\widetilde\Phi_k,\widetilde\Phi_\ell)_{L^2(D)}=\delta_{k,\ell},
\end{equation*}
where $\delta_{k,\ell}$ is the Kronecker delta for $k,\ell\in\mathbb{N}$.
Thus we obtain the eigenvalues and eigenfunctions for the eigenvalue problem \eqref{eq:32202}:
\begin{equation*}
\lambda_{n,m}=\Lambda_n+\widetilde{\Lambda}_m,\quad
\varphi_{n,m}(x)=\Phi_n(x_1)\widetilde\Phi_m(\widetilde{x}).
\end{equation*}
We also see that the eigenvalues are only these above.
Indeed, let us consider the problem \eqref{eq:32202} 
for arbitrarily fixed eigenvalue $\lambda$. 
Then the Fourier series expansion for $\varphi$ with respect to $x_1$ and $\widetilde{x}$ yields
\begin{equation*}
\varphi(x)
=\sum_{n,m=1}^\infty c_{nm}
\Phi_n(x_1)\widetilde\Phi_m(\widetilde{x})
\end{equation*}
with some constant 
$c_{nm}=(\varphi,\Phi_n\widetilde\Phi_m)_{L^2(\Omega)}$. 
Then we may choose at least one pair of $n,m$ such that $c_{nm}\neq 0$. 
Multiplying the equation in \eqref{eq:32202} by 
$\Phi_n\widetilde\Phi_m$ and integrating over $\Omega$, 
we may obtain $\lambda=\Lambda_n+\widetilde{\Lambda}_m$.

For simplicity, we rearrange the eigenvalues $\lambda_{n,m}$ 
in increasing order without multiplicities.
Let
\begin{equation*}
0<\lambda_1<\lambda_2<\lambda_3<\cdots\to\infty
\end{equation*}
be the set of eigenvalues of $\mathcal{B}$ 
and let $\{\vap_{n_j}\}_{j =1,\ldots d_n}$ be an orthonormal basis 
of $\Ker (\mathcal{B}-\lambda_n)$, 
$P_n:L^2(\Omega)\to \Ker (\mathcal{B}-\lambda_n)$ 
be the eigenprojection
\begin{equation*}
P_n v
=\sum_{j=1}^{d_n}
(v,\varphi_{n_j})_{L^2(\Omega)}
\varphi_{n_j}
,\quad 
v\in L^2(\Omega). 
\end{equation*}
Then we may decompose $\lambda_n$ and $\varphi_{n_j}$ into the followings:
\begin{equation*}
\lambda_{n}
=\Lambda_{n_j}+\widetilde\Lambda_{n_j}
\quad\text{and}\quad
\varphi_{n_j}
=\Phi_{n_j}\widetilde\Phi_{n_j},
\end{equation*}
where
$\Lambda_{n_j},\widetilde\Lambda_{n_j}$
and
$\Phi_{n_j},\widetilde\Phi_{n_j}$
are eigenvalues and eigenfunctions of the problems
\eqref{eq:32203}, \eqref{eq:32204} satisfying
\begin{equation*}
\Lambda_{n_1}\leq
\Lambda_{n_2}\leq\cdots\leq
\Lambda_{n_{d_n}},\quad
\widetilde\Lambda_{n_1}\geq
\widetilde\Lambda_{n_2}\geq\cdots\geq
\widetilde\Lambda_{n_{d_n}}.
\end{equation*}
We observe that 
the first eigenvalue $\lambda_1$ is simple: $d_1=1$ and 
$\lambda_1$ is written by the sum of 
the first eigenvalues $\Lambda_1$, $\widetilde{\Lambda}_1$ 
of the problems \eqref{eq:32203} and \eqref{eq:32204}:
\begin{equation*}
\lambda_1=\Lambda_1+\widetilde{\Lambda}_1, 
\end{equation*}
where  $\Lambda_1$, $\widetilde{\Lambda}_1$ are simple, too 
(see e.g., Theorem 8.38 (p.214) in Gilbarg and Trudinger \cite{GT}). 

Now we consider the following inverse coefficient problem. 

\noindent
\textbf{Inverse Problem 1}: 
Let $x_0\in\Omega$ and $t_0\in (0,T)$ be arbitrarily chosen. 
Determine the constant diffusion coefficient $p$ 
by the data $u_p(x_0,t_0)$.

We obtain the similar results on this inverse problem 
for the case of the layered material 
as we do for the case of a scalar coefficient. 
We will show this theorem in \S 4.2.4.

\begin{thm}[Stability]
\label{thm:32201}
We assume that $u_0\in H^2(\Omega) \cap H^1_0(\Omega)$ satisfies 
\begin{equation}
\label{eq:32205}
u_0\not\equiv 0 \quad \text{\rm in $\Omega$},\quad 
\pp_1^2 u_0(x) \ge 0\;\text{\rm and}\; 
\Delta u_0(x) \ge 0 \quad \text{\rm a.e. $x\in \Omega$}.
\end{equation}
Then there exists a constant
$C=C(\nu,\mu) > 0$ such that 
\begin{equation*}
|p-q| \leq C | u_p(x_0,t_0) - u_q(x_0,t_0)|.
\end{equation*}
\end{thm}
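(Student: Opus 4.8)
The plan is to mirror the argument for the scalar case (Theorem \ref{thm:32101}), exploiting that, thanks to the separation of variables already performed for \eqref{eq:32202}, the functions $\Phi_n(x_1)\widetilde\Phi_m(\widetilde x)$ are eigenfunctions of $-\dd(B_p\nabla\,\cdot\,)$ with eigenvalue $p\Lambda_n+\widetilde\Lambda_m$ for \emph{every} $p$, so that the solution of \eqref{eq:32201} admits the Mittag-Leffler expansion
\[
u_p(x,t)=\sum_{n,m}E_{\alpha,1}\!\left(-(p\Lambda_n+\widetilde\Lambda_m)t^\alpha\right)
(u_0,\Phi_n\widetilde\Phi_m)_{L^2(\Omega)}\,\Phi_n(x_1)\widetilde\Phi_m(\widetilde x).
\]
Since $N\le 3$ and $u_0\in H^2(\Omega)\cap H_0^1(\Omega)$, the pointwise value $u_p(x_0,t_0)$ is well defined and $F(p):=u_p(x_0,t_0)$ is $C^1$ on $[\nu,\mu]$. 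The stability estimate $|p-q|\le C|F(p)-F(q)|$ then reduces, via $F(p)-F(q)=\int_q^p F'(s)\,ds$, to a single uniform lower bound $\inf_{s\in[\nu,\mu]}F'(s)\ge c>0$, whence one may take $C=1/c$.

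Next I would identify $F'(p)$ with the trace of a solution of an auxiliary fractional diffusion equation. Differentiating \eqref{eq:32201} in $p$ and writing $w:=\pp_p u_p$, only the $(1,1)$-entry of $B_p$ depends on $p$, so
\[
\pp_t^\alpha w-\dd(B_p\nabla w)=\pp_1^2 u_p\ \text{in }\Omega\times(0,T),\quad
w=0\ \text{on }\pp\Omega\times(0,T),\quad w(\cdot,0)=0,
\]
and $F'(p)=w(x_0,t_0)$. Thus everything reduces to the sign and a quantitative lower bound for $w(x_0,t_0)$, which in turn hinges on controlling the source $\pp_1^2 u_p$.

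The key step is a double application of the maximum principle for the time-fractional equation (this is where $0<\alpha<1$ is essential). Because $\pp_1^2$ commutes with the constant-coefficient operator $\dd(B_p\nabla\,\cdot\,)$ and preserves the homogeneous Dirichlet condition, $\pp_1^2 u_p$ itself solves \eqref{eq:32201} with initial value $\pp_1^2 u_0$; the sign hypotheses \eqref{eq:32205}, and in particular $\pp_1^2 u_0\ge0$, make $\pp_1^2 u_0$ an admissible nonnegative datum, so the maximum principle yields $\pp_1^2 u_p\ge0$. Consequently $w$ solves the displayed equation with nonnegative source and zero initial and boundary data, whence $w\ge0$, i.e. $F$ is nondecreasing. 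Moreover $u_0\not\equiv0$ together with $u_0=0$ on $\pp\Omega$ forces $\pp_1^2 u_0\not\equiv0$ (otherwise $u_0$ would be affine in $x_1$ with vanishing endpoint values, hence $\equiv0$), so the source is not identically zero and the strong maximum principle — equivalently, strict interior positivity of the fundamental solution — gives $w(x_0,t_0)>0$ for each fixed $p\in[\nu,\mu]$.

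The main obstacle is upgrading this pointwise positivity to a bound \emph{uniform} in $p$. Here I would use that $p\mapsto F'(p)=w(x_0,t_0)$ is continuous on the compact interval $[\nu,\mu]$ (the series depends continuously, indeed analytically, on $p$), so its infimum is attained and equals a strictly positive constant $c=c(\nu,\mu)>0$. This furnishes $|F(p)-F(q)|\ge c|p-q|$ and hence the claim with $C=1/c$. The only delicate points are precisely the strict interior positivity $w(x_0,t_0)>0$, where the strong maximum principle enters, and the justification that the Mittag-Leffler series may be differentiated term by term in $p$ and evaluated at the fixed point $(x_0,t_0)$; both are routine given the smoothing of the fractional diffusion operator and the regularity $u_0\in H^2(\Omega)\cap H_0^1(\Omega)$ for $N\le3$.
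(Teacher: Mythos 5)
Your proposal is correct in substance but replaces the paper's key step by a genuinely different argument. The first half coincides with the paper's First and Second Steps: the Mittag--Leffler expansion in the product eigenbasis, the observation that $\pp_1^2 u_p$ solves the same initial boundary value problem with datum $\pp_1^2 u_0\ge 0$ so that the maximum principle of Luchko--Yamamoto gives $\pp_1^2 u_p\ge 0$, and the regularity/analyticity of $p\mapsto u_p(x_0,t_0)$. The divergence is in how strict monotonicity is obtained. The paper argues by contradiction: if $u_{p_0}(x_0,t_0)=u_{q_0}(x_0,t_0)$ for some $p_0>q_0$, then monotonicity plus analyticity in $p$ and the identity theorem force $u_p(x_0,t_0)$ to be constant in $p$; the asymptotics of $E_{\alpha,1}$ as $p\to\infty$ give the value $0$, contradicting $u_q(x_0,t_0)<0$ --- and it is precisely here that the hypothesis $\Delta u_0\ge 0$ is used (elliptic strong maximum principle gives $u_0\le 0$, then strong positivity gives $u_q(x_0,t_0)<0$). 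You instead differentiate in $p$, identify $F'(p)$ with the value at $(x_0,t_0)$ of the sensitivity equation driven by the source $\pp_1^2 u_p\ge 0$, $\not\equiv 0$, and invoke strict interior positivity. Your route is more direct, yields $F'>0$ everywhere rather than mere injectivity, and --- notably --- never uses $\Delta u_0\ge 0$: only $\pp_1^2 u_0\ge 0$ together with the cylindrical geometry of $\Omega=(0,\delta)\times D$, which correctly forces $\pp_1^2 u_0\not\equiv 0$. The price is twofold: (i) you need a strict positivity statement for the \emph{inhomogeneous} time-fractional problem (zero initial and boundary data, nonnegative nontrivial source), which is not literally the result the paper cites (Theorem 9 of Luchko--Yamamoto concerns the homogeneous problem); it does follow via Duhamel's principle and positivity of the $E_{\alpha,\alpha}$-type solution operators, but this must be stated or cited precisely; (ii) the term-by-term differentiation of the eigenfunction series in $p$ and the continuity of $p\mapsto F'(p)$ require exactly the $t^{-\alpha}$-type $L^2$-estimates the paper establishes in its Second Step, so you should carry them out for the differentiated series (the extra factor $\Lambda_{n_j}$ is absorbed by the decay of $E_{\alpha,1}'$). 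Both proofs end the same way: a uniform lower bound for $F'$ on the compact interval $[\nu,\mu]$ and the mean value theorem; you make this compactness step explicit, which the paper leaves implicit.
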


Let us
investigate the following inverse problem. 

\noindent
\textbf{Inverse Problem 2}: 
Let us choose 
a sub-domain $\omega\subset\Omega$ and 
an open interval $I\subset (0,T)$. 
Determine the constant diffusion coefficient 
$p$ 
by the data
\begin{equation*}
\int_I\int_\omega u_p(x,t)\,dxdt.
\end{equation*}

Then we have the following theorem. 
\begin{thm}[Stability]
\label{thm:32204}
We assume that $u_0\in H^2(\Omega) \cap H^1_0(\Omega)$ satisfies \eqref{eq:32205}
Then there exists a constant
$C=C(\nu,\mu) > 0$ such that 
\begin{equation*}
|p-q| 
\leq 
C 
\left|
\int_I\int_\omega u_p(x,t)\,dxdt
 - 
\int_I\int_\omega u_q(x,t)\,dxdt
\right|
.
\end{equation*}
\end{thm}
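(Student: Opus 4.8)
The plan is to view the observation as a scalar function of the unknown parameter and to show that it is strictly increasing with derivative bounded below, exactly as in the scalar integrated-data case of Theorem \ref{thm:32104}. Set
\[
F(p)=\int_I\int_\omega u_p(x,t)\,dx\,dt,\qquad \nu\le p\le\mu .
\]
If I can prove that $p\mapsto F(p)$ is $C^1$ on $[\nu,\mu]$ and that there is $c_0>0$, depending on $\nu,\mu$ and the fixed data $u_0,\omega,I,\Omega$, with $F'(r)\ge c_0$ for all $r\in[\nu,\mu]$, then the mean value theorem gives $F(p)-F(q)=\int_q^p F'(r)\,dr$, hence $|F(p)-F(q)|\ge c_0|p-q|$, which is the assertion with $C=c_0^{-1}$. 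Thus everything reduces to a monotonicity (sign-definiteness) statement for the $p$-sensitivity of $u_p$, to be extracted from the maximum principle for the time-fractional equation.

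First I would analyze the sensitivity $w:=\pp_p u_p$. Since $p$ enters \eqref{eq:32201} only through the $(1,1)$-entry of $B_p$, so that $\dd(B_p\nabla u_p)=p\,\pp_1^2u_p+b_2\pp_2^2u_p+\cdots+b_N\pp_N^2u_p$, differentiating the equation formally in $p$ gives
\[
\pp_t^\alpha w-\dd(B_p\nabla w)=\pp_1^2 u_p\ \text{in }\Omega\times(0,T),\qquad w=0\ \text{on }\pp\Omega\times(0,T),\qquad w(\cdot,0)=0,
\]
because $u_0$ is independent of $p$. By the positivity-preserving property of the zero-Dirichlet time-fractional diffusion problem, it then suffices to show that the source $\pp_1^2 u_p$ is nonnegative.

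Next I would prove $\pp_1^2 u_p\ge 0$, again by the maximum principle. As the coefficients of $\dd(B_p\nabla\cdot)$ are constant, $z:=\pp_1^2 u_p$ formally solves the same homogeneous equation $\pp_t^\alpha z-\dd(B_p\nabla z)=0$ with $z(\cdot,0)=\pp_1^2 u_0\ge 0$ by \eqref{eq:32205}. The delicate point is the boundary condition for $z$: on the lateral part $(0,\delta)\times\pp D$ the operator $\pp_1^2$ is tangential, so $u_p\equiv0$ there forces $z=0$; on the faces $\{0,\delta\}\times D$ one uses the equation itself, since $u_p\equiv0$ on each face makes $\pp_t^\alpha u_p$ and $b_2\pp_2^2u_p+\cdots+b_N\pp_N^2u_p$ vanish there, whence $p\,\pp_1^2u_p=0$, i.e.\ $z=0$. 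Therefore $z$ solves the zero-Dirichlet fractional problem with nonnegative initial data, so $z\ge0$ and consequently $w=\pp_p u_p\ge0$; the hypothesis $\Delta u_0\ge0$ enters exactly as in Theorem \ref{thm:32104} to fix the global sign of $u_p$ used in the comparison, paralleling the pointwise result Theorem \ref{thm:32201}. To upgrade this to the uniform lower bound, note that $u_0\not\equiv0$ together with $u_0=0$ on the faces $\{0,\delta\}\times D$ forces $\pp_1^2 u_0\not\equiv0$ (an $x_1$-affine function vanishing at both ends is trivial), so the strong maximum principle yields $z>0$, hence $w>0$ in $\Omega\times(0,T)$ and $F'(r)=\int_I\int_\omega \pp_r u_r\,dx\,dt>0$ for each $r\in[\nu,\mu]$; continuity of $r\mapsto F'(r)$ on the compact interval $[\nu,\mu]$ gives $c_0:=\min_{[\nu,\mu]}F'>0$, completing the argument.

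I expect the main obstacle to be the rigorous justification of the two formal differentiations within the weak $L^2$-framework of \S2.1: establishing $C^1$-dependence of $u_p$ on $p$ in a space where $\pp_1^2 u_p$ is meaningful, verifying the boundary trace of $\pp_1^2 u_p$ on the faces $\{0,\delta\}\times D$, and invoking a maximum principle valid for the general fractional derivative $\pp_t^\alpha=(J^\alpha)^{-1}$ rather than the classical Caputo operator. I would control all three through the spectral representation $u_p=\sum_{n,m}E_{\alpha,1}(-(p\Lambda_n+\widetilde\Lambda_m)t^\alpha)(u_0,\Phi_n\widetilde\Phi_m)_{L^2(\Omega)}\Phi_n\widetilde\Phi_m$, in which the $p$-differentiation, the comparison, and the required positivity are transferred to the known monotonicity of the Mittag-Leffler functions $E_{\alpha,1}(-s)$.
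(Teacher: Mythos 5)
Your proposal is correct at the level of rigor of the paper, but it reaches the key strict-monotonicity step by a genuinely different route. The shared core is the observation that $z=\pp_1^2 u_p$ solves the homogeneous problem with initial datum $\pp_1^2 u_0\ge 0$ (the paper's \eqref{eq:42403}), hence $z\ge 0$ by the Luchko--Yamamoto maximum principle; the paper feeds this into the \emph{finite difference} $y=u_p-u_q$ with source $(p-q)\pp_1^2 u_q$ to get $u_p\ge u_q$, while you feed it into the \emph{sensitivity} $w=\pp_p u_p$ with source $\pp_1^2 u_p$. Where you truly diverge is in upgrading $\ge$ to $>$: the paper argues by contradiction, combining analyticity of $p\mapsto u_p(x_0,t_0)$, the identity theorem, the large-$p$ Mittag-Leffler asymptotics forcing the common value to be $0$, and the elliptic strong maximum principle (which is precisely where $\Delta u_0\ge 0$ and $u_0\not\equiv 0$ are used, to get $u_0\le 0$ and then $u_q(x_0,t_0)<0$ by strong negativity). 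You instead note that $\pp_1^2 u_0\not\equiv 0$ follows from $u_0\not\equiv 0$ and the Dirichlet condition on the faces, invoke strong positivity to get $z>0$ and hence $w>0$, and conclude $F'>0$ directly; this bypasses the analyticity/asymptotics machinery entirely and, as a bonus, never actually uses the hypothesis $\Delta u_0\ge 0$ of \eqref{eq:32205} --- your parenthetical remark that it ``fixes the global sign of $u_p$'' plays no role in your own argument and could be dropped. The trade-off is that your route leans harder on a strong positivity statement for the \emph{inhomogeneous} (Duhamel) problem and on term-by-term $p$-differentiation of the spectral series, both of which you correctly flag as the points needing justification; the paper's route needs only positivity for the homogeneous problem plus strong positivity at a single point, at the cost of the longer contradiction argument. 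Both proofs end identically, with the mean value theorem and a minimum of the derivative over the compact interval $[\nu,\mu]$.
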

We will prove this theorem in \S4.2.5.

\begin{rmk}
In Theorem \ref{thm:32201} and Theorem \ref{thm:32204}, 
we can replace the latter part of the assumptions \eqref{eq:32205} by $\pp_1^2 u_0(x) \le 0\;\text{\rm and}\; \Delta u_0(x) \le 0$ a.e in $x\in\Omega$.
\end{rmk}

Although we cannot prove the uniqueness such as Theorem \ref{thm:32102}, 
we may consider the uniqueness for $p$ for unknown initial values
$u_{0,p}$ for the layered material. 

Let $\widetilde{u}_{p}$ satisfy 
\begin{equation}
\label{eq:32206}
\left\{
\begin{aligned}
&
\pp_t^\alpha \widetilde{u}_{p}(x,t)
-
\dd(B_p \nabla u_p(x,t))
=
0,
&(x,t)\in\Omega\times(0,T)
,\\&
\widetilde{u}_{p}(x,t)=0,
&(x,t)\in\pp\Omega\times(0,T)
,\\&
\widetilde{u}_{p}(x,0)=u_{0,p}(x),
&x\in\Omega
.
\end{aligned}
\right.
\end{equation}

\noindent
\textbf{Inverse Problem 3}: 
Let $x_0\in\Omega$ and $t_1\in (0,T)$ be arbitrarily chosen. 
Determine the constant diffusion coefficient $p$ 
by the data $u_p(x_0,t)$, $0<t<t_1$.

\begin{thm}[Uniqueness with unknown initial values]
\label{thm:32203}
Let $p,q\geq 1$. 
We assume that $u_{0,p} \in H^2(\Omega) \cap H^1_0(\Omega)$ satisfy
\begin{equation}
\label{eq:32207}
P_1 u_{0,p}(x_0)\ne 0\quad\text{\rm and}\quad
P_1 u_{0,q}(x_0)\ne 0
.
\end{equation}
If $\widetilde{u}_{p}(x_0,t) = \widetilde{u}_{q}(x_0,t)$ 
for $0<t<t_1$ with some constant $t_1$,
then we have $p=q$.
\end{thm}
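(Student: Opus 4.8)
The plan is to turn the coincidence of the two point data into an identity between meromorphic functions produced by the Laplace transform in time, and then to recover $p$ from the location of the first pole. First I would expand $\widetilde u_p(x_0,\cdot)$ in the eigensystem of the operator $\mathcal B_p:=-\dd(B_p\nabla\,\cdot\,)=-p\pp_1^2+\widetilde{\mathcal B}$. Separating variables exactly as in \eqref{eq:32203}--\eqref{eq:32204}, the eigenfunctions $\vap_{nm}=\Phi_n\widetilde\Phi_m$ do not depend on $p$, while the eigenvalues are $\la_{nm}^{(p)}=p\Lambda_n+\widetilde\Lambda_m$; in particular the smallest eigenvalue $p\Lambda_1+\widetilde\Lambda_1$ is simple for every $p>0$, with eigenfunction $\Phi_1\widetilde\Phi_1$ spanning $\Ker(\mathcal B_p-\la_1^{(p)})$, so that the first eigenprojection $P_1$ is the same for $\mathcal B_p$ as for $\mathcal B$. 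Invoking the Mittag-Leffler representation of the solution (as used in the Example following Theorem \ref{thm:32103}), I would write
\begin{equation*}
\widetilde u_p(x_0,t)=\sum_{n,m} E_{\alpha,1}\!\left(-(p\Lambda_n+\widetilde\Lambda_m)t^\alpha\right)(u_{0,p},\vap_{nm})_{L^2(\Omega)}\,\vap_{nm}(x_0),
\end{equation*}
so that the coefficient carried by the first mode is precisely $P_1u_{0,p}(x_0)$, which is nonzero by \eqref{eq:32207}.

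Next I would upgrade the hypothesis $\widetilde u_p(x_0,t)=\widetilde u_q(x_0,t)$ on $(0,t_1)$ to an identity on all of $(0,\infty)$: each function $t\mapsto E_{\alpha,1}(-\la t^\alpha)$ is real-analytic on $(0,\infty)$, and the series converge there thanks to the regularity furnished by Theorem \ref{thm:well-posedness_FDE} and the estimate \eqref{eq:estimate01}, so the two expansions agree for every $t>0$. Applying the Laplace transform and using $\int_0^\infty e^{-st}E_{\alpha,1}(-\la t^\alpha)\,dt=\dfrac{s^{\alpha-1}}{s^\alpha+\la}$, the identity reads $s^{\alpha-1}G_p(s^\alpha)=s^{\alpha-1}G_q(s^\alpha)$, hence $G_p(s^\alpha)=G_q(s^\alpha)$ on a right half-plane, where
\begin{equation*}
G_p(\eta)=\sum_{n,m}\frac{(u_{0,p},\vap_{nm})_{L^2(\Omega)}\,\vap_{nm}(x_0)}{\eta+p\Lambda_n+\widetilde\Lambda_m}.
\end{equation*}
Since $s\mapsto s^\alpha$ is a biholomorphism of the right half-plane onto the sector $\{|\arg\eta|<\alpha\pi/2\}$ and both $G_p,G_q$ are meromorphic on $\mathbb C$, the identity theorem yields $G_p\equiv G_q$.

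The final step is pole matching. Because $p,q\ge1>0$, the index $(1,1)$ gives the unique eigenvalue of smallest modulus for each operator, so the pole of $G_p$ closest to the origin sits at $\eta=-(p\Lambda_1+\widetilde\Lambda_1)$ with residue $P_1u_{0,p}(x_0)\ne0$, and that of $G_q$ at $\eta=-(q\Lambda_1+\widetilde\Lambda_1)$ with residue $P_1u_{0,q}(x_0)\ne0$. As $G_p\equiv G_q$ these nearest poles must coincide, whence $p\Lambda_1+\widetilde\Lambda_1=q\Lambda_1+\widetilde\Lambda_1$ and therefore $p=q$, since $\Lambda_1>0$. The main obstacle is the analytic step of the second paragraph: justifying the termwise analytic continuation in $t$ and the exchange of the Laplace transform with the only $L^2$-convergent eigenfunction series, for data merely in $H^2(\Omega)\cap H_0^1(\Omega)$ and for point evaluation at $x_0$ (legitimate here since $N\le 3$ forces $H^2(\Omega)\hookrightarrow C(\overline\Omega)$). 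Note that \eqref{eq:32207} is exactly what makes the candidate first pole genuine rather than removable; were $P_1u_{0,p}(x_0)$ allowed to vanish, one would have to chase the next nonvanishing mode, and the Example after Theorem \ref{thm:32103} shows that uniqueness can then fail.
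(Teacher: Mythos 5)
Your proposal is correct, and it reaches the conclusion by a genuinely different route from the paper. The paper's proof (\S4.2.6) works entirely on the real time axis: after the same eigenfunction expansion and the same analytic continuation from $(0,t_1)$ to $(0,\infty)$, it inserts the large-time asymptotic expansion $E_{\alpha,1}(-\la t^{\alpha})=\sum_{k=1}^{K}\frac{(-1)^{k+1}}{\Gamma(1-\alpha k)}\la^{-k}t^{-\alpha k}+O(t^{-\alpha(K+1)})$, equates the coefficients of $t^{-\alpha k}$ to obtain
\begin{equation*}
\sum_{n,j}\frac{(u_{0,p},\vap_{n_j})\vap_{n_j}(x_0)}{(p\Lambda_{n_j}+\widetilde\Lambda_{n_j})^{k}}
=\sum_{n,j}\frac{(u_{0,q},\vap_{n_j})\vap_{n_j}(x_0)}{(q\Lambda_{n_j}+\widetilde\Lambda_{n_j})^{k}}\quad\text{for all }k\in\mathbb{N},
\end{equation*}
and then, assuming $q/p>1$, normalizes by $(p\Lambda_1+\widetilde\Lambda_1)^{k}$ and lets $k\to\infty$; the inequality \eqref{eq:42409} (this is where $p,q\ge 1$ is used) forces every term except $P_1u_{0,p}(x_0)$ to vanish in the limit, giving a contradiction with \eqref{eq:32207}. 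You instead pass to the Laplace transform, continue $G_p$ and $G_q$ meromorphically, and match the genuine pole nearest the origin. Both arguments rest on the same two pillars: the first eigenvalue of $\mathcal{B}_p$ is $p\Lambda_1+\widetilde\Lambda_1$, simple, with a $p$-independent eigenprojection $P_1$, and the hypothesis \eqref{eq:32207} guarantees that the lowest mode actually appears in the point data. What your route buys: the pole-matching step only needs $p\Lambda_n+\widetilde\Lambda_m\ge p\Lambda_1+\widetilde\Lambda_1$ with equality only at $(1,1)$, which holds for every $p>0$, so you do not need the hypothesis $p,q\ge 1$ at all (the paper needs it only to run its particular estimate \eqref{eq:42409}); you also sidestep the uniformity of the asymptotic remainder over the infinite eigenfunction series. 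The price is the analytic bookkeeping you already flag: justifying the termwise Laplace transform and the local uniform convergence of $G_p(\eta)=\sum_{n,m}(u_{0,p},\vap_{nm})\vap_{nm}(x_0)/(\eta+p\Lambda_n+\widetilde\Lambda_m)$ away from its poles, which follows from $u_{0,p}\in H^2(\Omega)\cap H_0^1(\Omega)$, the bound $|\vap_{nm}(x_0)|\le C\la_{nm}$ from $H^2(\Omega)\hookrightarrow C(\overline{\Omega})$, and Weyl asymptotics for $N\le 3$; this is no heavier than the interchanges of limit and sum that the paper's own proof leaves implicit.
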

We will prove Theorem \ref{thm:32203} in 
\S 4.2.6.

%ssssssssssssssssssssssssssssssssssssssssssssssssss
%
% 	Inverse Problems between Different Structures
%
%ssssssssssssssssssssssssssssssssssssssssssssssssss
\subsection{Inverse Problems between Different Structures}

We discuss inverse coefficient problems 
to the two examples considered in \S3.1.2 and \S3.1.3. 
That is, we investigate 
the inverse coefficient problems 
for the one-dimensional case in space in \S3.3.1
and the layered material case in \S3.3.2.
We consider the following two types of inverse problems.
\begingroup
\leftmargini=2em
\begin{itemize}
\item
\textbf{Inverse Problem I}: 
Determine the diffusion coefficient 
for the homogenized equations of the problem \eqref{eq:3105} from the data of the problem \eqref{eq:3101} in the periodic structure.  
\item
\textbf{Inverse Problem II}: 
Determine the diffusion coefficient 
for the periodic equations of the problem \eqref{eq:3101} from the data of the problem \eqref{eq:3105}  in the homogenized structure.
\end{itemize}
\endgroup

%--------------------------------------------------
%
% 	One Dimensional Case in Space
%
%--------------------------------------------------
\subsubsection{One-Dimensional Case in Space}

Let $Y=(0,\ell_1)$ and 
$\Omega$ be an open interval in $\mathbb{R}$. 
We denote by $|Y|$ the measure of $Y$. 
We assume that $a_k^\vae$ is defined by
\begin{equation*}
a_k^\vae(x)
=
a_k\left(\frac{x}{\vae}\right)
,\quad
x\in\mathbb{R}
,\quad
k=1,2,
\end{equation*}
where
\begin{equation*}
a_k\in L^\infty(Y),\quad
\nu\leq a_k(y)\leq \mu,\ y\in \overline{Y},\quad
\text{$a_k$ is $Y$-periodic}
,\quad
k=1,2
\end{equation*}
with 
constants $\nu,\mu\in\mathbb{R}$ satisfying $0<\nu<\mu$.

Let 
%$f^\vae\in L^2(0,T;H^{-1}(\Omega))$ and
$u_{0,k}^\vae \in L^2(\Omega)$ satisfy 
convergence \eqref{eq:3102} 
with 
%$f\in L^2(0,T;H^{-1}(\Omega))$ and
$u_{0,k} \in L^2(\Omega)$. 
We assume that 
$u_k^\vae$ satisfies the problem:
\begin{equation}
\label{eq:33102}
\left\{
\begin{aligned}
&
\pp_t^\alpha u_k^\vae(x,t)
-
\pp_x (a_k^\vae(x)\pp_x u_k^\vae(x,t))
=
0,
&(x,t)\in\Omega\times(0,T)
,\\&
u_k^\vae(x,t)=0,
&(x,t)\in\pp\Omega\times(0,T)
,\\&
u_k^\vae(x,0)=u_{0,k}^\vae(x),
&x\in\Omega
.
\end{aligned}
\right.
\end{equation}

%We consider the inverse problem of 
%determining the diffusion coefficient $a_k$ 
%from the data in the homogenized macroscopic structure. 
%More precisely, we investigate the stability in 
%the inverse coefficient problem. 

By Theorem \ref{thm:3101} and Lemma \ref{lem:3101}, 
we may see that 
$u_k^\vae$ converges to
the weak solution $u_k^0$ of the following problem in the sense of the convergences \eqref{eq:3103} and \eqref{eq:3104}:
\begin{equation}
\label{eq:33103}
\left\{
\begin{aligned}
&
\pp_t^\alpha u_k^0(x,t)
-
\pp_x (a_k^0\pp_x u_k^0(x,t))
=
0,
&(x,t)\in\Omega\times(0,T)
,\\&
u_k^0(x,t)=0,
&(x,t)\in\pp\Omega\times(0,T)
,\\&
u_k^0(x,0)=u_{0,k}(x),
&x\in\Omega
,
\end{aligned}
\right.
\end{equation}
where 
$a_k^0$ is the homogenized coefficient defined by
\begin{equation*}
a_k^0
= 
\frac{1}{\M_Y\left(\frac{1}{a_k}\right)}
.
\end{equation*}

Let us consider the following inverse problem of type I. 

\noindent
\textbf{Inverse Problem I}:
Let us choose 
a sub-domain $\omega\subset\Omega$ and 
an open interval $I\subset (0,T)$. 
Determine the constant diffusion coefficient $a_k^0$ of the problem \eqref{eq:33103} from the data 
of the problem \eqref{eq:33102}
\begin{equation*}
\int_I\int_\omega u_k^\vae(x,t)\,dxdt.
\end{equation*}

We can obtain the asymptotic stability results for 
the above inverse problem 
as a direct consequence of Theorem \ref{thm:3101} 
and the results on the inverse problem of determining the constant diffusion coefficient: Theorem \ref{thm:32104} in \S3.2. 

In the following theorem, 
we assume that 
\eqref{eq:33103} is derived from 
\eqref{eq:33102} by the homogenization. 

\begin{thm}[Asymptotic Stability]
\label{thm:33100}
Let $u_0:=u_{0,1}\equiv  u_{0,2}$ in $\Omega$. 
We assume that $u_0\in H^2(\Omega) \cap H^1_0(\Omega)$ satisfies 
\begin{equation*}
u_0\not\equiv 0 \quad \text{\rm in $\Omega$} \quad \text{\rm and}\quad 
\Delta u_0(x) \ge 0 \quad \text{\rm a.e. $x\in \Omega$}.
\end{equation*}
Then there exists a constant
$C=C(\nu,\mu) > 0$ such that 
\begin{equation*}
\left|
a_1^0-a_2^0
\right|
\leq 
C \left| 
\int_I\int_\omega u_1^\vae(x,t)\,dxdt
-
\int_I\int_\omega u_2^\vae(x,t)\,dxdt
\right|
+
\theta(\vae)
\end{equation*}
for all $\vae>0$. 
Here
$\theta(\vae)\to 0$ as $\vae\to 0$.
\end{thm}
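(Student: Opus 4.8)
The plan is to combine the stability estimate for the homogenized (constant-coefficient) equation, namely Theorem \ref{thm:32104}, with the strong $L^2$-convergence furnished by the homogenization Theorem \ref{thm:3101}. The underlying idea is that the quantity we can actually read off from the periodic data, $\int_I\int_\omega u_k^\vae\,dx\,dt$, is close to its homogenized counterpart $\int_I\int_\omega u_k^0\,dx\,dt$ up to an error vanishing with $\vae$, while the latter averaged data already controls $|a_1^0-a_2^0|$ through Theorem \ref{thm:32104}.

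First I would check that Theorem \ref{thm:32104} applies to the homogenized pair. Since $\nu\le a_k\le\mu$ gives $\nu\le a_k^0=1/\M_Y(1/a_k)\le\mu$ via Lemma \ref{lem:3101}, and since $N=1$ makes $\pp_x(a_k^0\pp_x\,\cdot)=a_k^0\,\pp_x^2(\cdot)$ with $a_k^0$ constant, the homogenized problem \eqref{eq:33103} is exactly of the form \eqref{eq:32101} with $p=a_k^0$. The hypothesis $u_{0,1}\equiv u_{0,2}=u_0$ with $u_0\in H^2(\Omega)\cap H_0^1(\Omega)$, $u_0\not\equiv 0$, $\Delta u_0\ge 0$ is precisely condition \eqref{eq:32102}; this is the reason the common initial datum is assumed, as it is exactly what lets both homogenized solutions share the single initial value demanded by Theorem \ref{thm:32104}. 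That theorem then yields a constant $C=C(\nu,\mu)>0$, uniform in the pair, with
\begin{equation*}
|a_1^0-a_2^0|\le C\left|\int_I\int_\omega u_1^0\,dx\,dt-\int_I\int_\omega u_2^0\,dx\,dt\right|.
\end{equation*}

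Next I would insert the periodic data via the triangle inequality, writing each homogenized average as the corresponding periodic average plus the difference $\int_I\int_\omega(u_k^0-u_k^\vae)\,dx\,dt$. Bounding each difference by the Cauchy--Schwarz inequality,
\begin{equation*}
\left|\int_I\int_\omega(u_k^0-u_k^\vae)\,dx\,dt\right|\le |I|^{\frac12}|\omega|^{\frac12}\,\|u_k^0-u_k^\vae\|_{L^2(0,T;L^2(\Omega))},
\end{equation*}
and invoking the strong convergence \eqref{eq:3104} of Theorem \ref{thm:3101}, namely $u_k^\vae\to u_k^0$ in $L^2(0,T;L^2(\Omega))$, the right-hand sides tend to $0$ as $\vae\to 0$. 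Setting
\begin{equation*}
\theta(\vae)=C\sum_{k=1,2}\left|\int_I\int_\omega(u_k^0-u_k^\vae)\,dx\,dt\right|
\end{equation*}
then closes the estimate, with $\theta(\vae)\to 0$ as $\vae\to 0$.

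Since every ingredient is already in place, I do not expect a genuine obstacle; the proof is essentially a clean splicing of the two main results. The only points requiring care are that the constant $C$ in Theorem \ref{thm:32104} is uniform---it depends only on $\nu,\mu$, which bound both $a_1^0$ and $a_2^0$---so that it may be factored out of the triangle inequality consistently into a single $\theta(\vae)$, and that the convergence \eqref{eq:3104} is applied separately to each index $k=1,2$ (the hypotheses of Theorem \ref{thm:3101} hold for each, using the data convergence \eqref{eq:3102} satisfied by $u_{0,k}^\vae$).
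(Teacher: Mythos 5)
Your proposal is correct and follows essentially the same route as the paper: apply Theorem \ref{thm:32104} to the homogenized solutions, split the averaged data by the triangle inequality into the periodic averages plus the differences $\int_I\int_\omega(u_k^0-u_k^\vae)\,dx\,dt$, bound these by the $L^2(0,T;L^2(\Omega))$ norms, and absorb them into $\theta(\vae)$ via the strong convergence \eqref{eq:3104}. Your explicit verification that the homogenized problem fits the hypotheses of Theorem \ref{thm:32104} is a useful addition but does not change the argument.
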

\begin{proof}
By the triangle inequality,  we have
\begin{align*}
\left|
a_1^0-a_2^0
\right| 
&\leq 
C 
\left|
\int_I\int_\omega u_1^0(x,t)\,dxdt
 - 
\int_I\int_\omega u_2^0(x,t)\,dxdt
\right|
\\&=
C 
\left|
\int_I\int_\omega \left(u_1^0(x,t)-u_2^0(x,t)\right)\,dxdt
\right|
\\&\leq
C
\left|
\int_I\int_\omega \left(u_1^0(x,t)-u_1^\vae(x,t)\right)\,dxdt
\right|
\\&\quad
+C
\left|
\int_I\int_\omega \left(u_2^\vae(x,t)-u_2^0(x,t)\right)\,dxdt
\right|
\\&\quad
+C
\left|
\int_I\int_\omega \left(u_1^\vae(x,t)-u_2^\vae(x,t)\right)\,dxdt
\right|
\\&\leq
C \left| 
\int_I\int_\omega u_1^\vae(x,t)\,dxdt
-
\int_I\int_\omega u_2^\vae(x,t)\,dxdt
\right|
\\&\quad
+
C
\left(
\left\|u_1^\vae-u_1^0\right\|_{L^2(0,T;L^2(\Omega))}
+
\left\|u_2^\vae-u_2^0\right\|_{L^2(0,T;L^2(\Omega))}
\right).
\end{align*}
Set
\begin{equation*}
\theta(\vae)
=
C
\left(
\left\|u_1^\vae-u_1^0\right\|_{L^2(0,T;L^2(\Omega))}
+
\left\|u_2^\vae-u_2^0\right\|_{L^2(0,T;L^2(\Omega))}
\right).
\end{equation*}
Then we see that $\theta(\vae)\to 0$ as $\vae\to 0$
by the convergence \eqref{eq:3104} of Theorem \ref{thm:32104}.

Thus we conclude the proof of Theorem \ref{thm:33100}.
\end{proof}
\begin{rmk}
%Since
%\begin{align*}
%\left|a_1^0-a_2^0\right|
%&=
%\left|
%\frac1{\M_{(0,\ell_1)}\left(\frac{1}{a_1}\right)}
%-
%\frac1{\M_{(0,\ell_1)}\left(\frac{1}{a_2}\right)}
%\right|
%=
%\frac{\left|\M_{(0,\ell_1)}\left(\frac{1}{a_2}\right)-\M_{(0,\ell_1)}\left(\frac{1}{a_1}\right)\right|}{\M_{(0,\ell_1)}\left(\frac{1}{a_1}\right)\M_{(0,\ell_1)}\left(\frac{1}{a_2}\right)}
%\\&\leq 
%\frac{\mu^2}{\ell_1}
%\int_0^{\ell_1}
%\left|
%\frac1{a_2}
%-
%\frac1{a_1}
%\right|
%\,dy
%=
%\frac{\mu^2}{\ell_1}
%\int_0^{\ell_1}
%\frac{\left|a_1-a_2\right|}{a_1a_2}
%\,dy
%\leq
%\frac{\mu^2}{\nu^2\ell_1}
%\int_0^{\ell_1}
%\left|a_1-a_2\right|
%\,dy, 
%\end{align*}
By the direct calculation,  
we get
\begin{equation}
\label{eq:33100}
\left|
a_1^0-a_2^0
\right|
\leq
C
\left\|
a_1-a_2
\right\|_{L^1(Y)}.
\end{equation}
Hence, we may obtain the stability result by using 
stronger observation data. 
\end{rmk}

Next we consider the following inverse problem of type II. 

\noindent
\textbf{Inverse Problem II-1}:
Let $x_0\in\Omega$ and $t_0\in (0,T)$ be arbitrarily chosen. 
Determine the diffusion coefficient $a_k$ of the problem \eqref{eq:33102} from the data $u_k^0(x_0,t_0)$ of the problem \eqref{eq:33103}.

We start with showing 
the relation between $a_k$ and $a_k^0$ 
by a straightforward calculation.
To prove the following lemma, we assume the a priori condition:
\begin{equation}
\label{eq:33101}
a_1(y)\geq a_2(y)
,\quad
y\in \overline{Y}.
\end{equation}

In the followings, 
we assume that 
\eqref{eq:33103} is derived from 
\eqref{eq:33102} by the homogenization 
under its assumptions and \eqref{eq:33101}.

\begin{lem}
\label{lem:33101}
We have
\begin{equation*}
C^{-1}
\left\|
a_1-a_2
\right\|_{L^1(Y)}
\leq
a_1^0-a_2^0
\leq
C
\left\|
a_1-a_2
\right\|_{L^1(Y)}
\end{equation*}
with the constant $C>0$ depending on only $\nu,\mu,\ell_1$. 
\end{lem}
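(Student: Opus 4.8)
The plan is to exploit the explicit one-dimensional formula $a_k^0 = 1/\M_Y(1/a_k)$ from Lemma \ref{lem:3101} and reduce the claim to elementary pointwise estimates on $a_1,a_2$. First I would combine the two reciprocals into a single fraction,
\begin{equation*}
a_1^0 - a_2^0
=
\frac{1}{\M_Y(1/a_1)} - \frac{1}{\M_Y(1/a_2)}
=
\frac{\M_Y(1/a_2) - \M_Y(1/a_1)}{\M_Y(1/a_1)\,\M_Y(1/a_2)}.
\end{equation*}
By linearity of $\M_Y$ the numerator equals $\M_Y\!\left(\frac{a_1-a_2}{a_1 a_2}\right)$, so the whole quantity is controlled by estimating this numerator and the product in the denominator separately.

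For the numerator I would use the a priori ordering \eqref{eq:33101}, which gives $a_1-a_2\ge 0$ on $\overline{Y}$, hence $\M_Y(1/a_1)\le\M_Y(1/a_2)$; this already shows $a_1^0-a_2^0\ge 0$, matching the statement, and it lets me replace $a_1-a_2$ by $|a_1-a_2|$ so that $\int_Y(a_1-a_2)\,dy=\|a_1-a_2\|_{L^1(Y)}$. The uniform bounds $\nu\le a_k\le\mu$ give $\nu^2\le a_1 a_2\le\mu^2$ pointwise, whence, recalling $|Y|=\ell_1$,
\begin{equation*}
\frac{1}{\mu^2\ell_1}\|a_1-a_2\|_{L^1(Y)}
\le
\M_Y\!\left(\frac{a_1-a_2}{a_1 a_2}\right)
\le
\frac{1}{\nu^2\ell_1}\|a_1-a_2\|_{L^1(Y)}.
\end{equation*}

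For the denominator the same bounds yield $\tfrac1\mu\le\M_Y(1/a_k)\le\tfrac1\nu$, so $\tfrac{1}{\mu^2}\le\M_Y(1/a_1)\,\M_Y(1/a_2)\le\tfrac{1}{\nu^2}$. Dividing the numerator estimate by the denominator estimate, taking the worst case in each direction, gives
\begin{equation*}
\frac{\nu^2}{\mu^2\ell_1}\|a_1-a_2\|_{L^1(Y)}
\le
a_1^0-a_2^0
\le
\frac{\mu^2}{\nu^2\ell_1}\|a_1-a_2\|_{L^1(Y)},
\end{equation*}
and choosing $C=\max\{\mu^2\ell_1/\nu^2,\ \mu^2/(\nu^2\ell_1)\}$, which depends only on $\nu,\mu,\ell_1$, yields the two-sided bound. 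I expect no genuine obstacle here: the only care required is tracking the directions of the inequalities and using the a priori ordering \eqref{eq:33101} to convert the signed difference $\M_Y(1/a_2)-\M_Y(1/a_1)$ into the norm $\|a_1-a_2\|_{L^1(Y)}$, which is precisely why condition \eqref{eq:33101} must be imposed.
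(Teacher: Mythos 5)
Your proposal is correct and follows essentially the same route as the paper: write $a_1^0-a_2^0$ as a single fraction, bound the denominator by $\nu^2\le \M_Y(1/a_1)\M_Y(1/a_2)^{-1}\cdots$ via $\nu\le a_k\le\mu$, and use the a priori ordering \eqref{eq:33101} together with $\nu^2\le a_1a_2\le\mu^2$ to compare the numerator with $\|a_1-a_2\|_{L^1(Y)}$. The only cosmetic difference is that you derive the upper bound explicitly by the same computation, whereas the paper simply invokes the direct-calculation inequality \eqref{eq:33100} stated in the preceding remark.
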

\begin{proof}
Since $\nu\leq a_k(y) \leq \mu$ for $y\in \overline{Y}$, 
we have 
\begin{equation*}
\frac1\mu
\leq
\frac1{a_k(y)}
\leq
\frac1\nu,\ y\in \overline{Y}
,\quad
\nu\leq
\frac1{\M_{(0,\ell_1)}\left(\frac{1}{a_k}\right)}
\leq\mu
\end{equation*}
for $k=1,2$. 
So we obtain
\begin{align*}
a_1^0-a_2^0
&=
\frac1{\M_{(0,\ell_1)}\left(\frac{1}{a_1}\right)}
-
\frac1{\M_{(0,\ell_1)}\left(\frac{1}{a_2}\right)}
=
\frac{\M_{(0,\ell_1)}\left(\frac{1}{a_2}\right)-\M_{(0,\ell_1)}\left(\frac{1}{a_1}\right)}{\M_{(0,\ell_1)}\left(\frac{1}{a_1}\right)\M_{(0,\ell_1)}\left(\frac{1}{a_2}\right)}
\\&\geq 
\frac{\nu^2}{\ell_1}
\int_0^{\ell_1}
\left(
\frac1{a_2}
-
\frac1{a_1}
\right)
\,dy
=
\frac{\nu^2}{\ell_1}
\int_0^{\ell_1}
\frac{a_1-a_2}{a_1a_2}
\,dy
\geq
\frac{\nu^2}{\mu^2\ell_1}
\int_0^{\ell_1}
(a_1-a_2)
\,dy.
\end{align*}
This with \eqref{eq:33100} yields \eqref{eq:33101}.

Thus we complete the proof of Lemma \ref{lem:33101}. 
\end{proof}
\begin{rmk}
We need the strong assumption \eqref{eq:33101} on the relation between $a_1$ and $a_2$. 
%In general, this is 
In our inverse problems, 
we consider the problem to find $a_k$ from $a_k^0$. 
This is the same problem 
as 
the problem to find a function from its average (the harmonic mean). 
So we need the a priori assumption \eqref{eq:33101}.
\end{rmk}

We can obtain the results for 
the above inverse problem 
as a direct consequence of Lemma \ref{lem:33101} 
and the results on the inverse problem of determining the constant diffusion coefficient: Theorems \ref{thm:32101} in \S3.2.

\begin{prop}[Stability]
\label{prop:33101}
Let $u_0:=u_{0,1}\equiv  u_{0,2}$ in $\Omega$. 
We assume that $u_0\in H^2(\Omega) \cap H^1_0(\Omega)$ satisfies 
\begin{equation*}
u_0\not\equiv 0 \quad \text{\rm in $\Omega$} \quad \text{\rm and}\quad 
\Delta u_0(x) \ge 0 \quad \text{\rm a.e. $x\in \Omega$}.
\end{equation*}
Then there exists a constant
$C=C(\nu,\mu,Y) > 0$ such that 
\begin{equation*}
\left\|
a_1-a_2
\right\|_{L^1(Y)}
\leq C \left| u_1^0(x_0,t_0) - u_2^0(x_0,t_0)\right|.
\end{equation*}
\end{prop}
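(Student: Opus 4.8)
The plan is to derive this estimate as a straightforward composition of the scalar stability result (Theorem \ref{thm:32101}) with the lower bound relating $a_1^0-a_2^0$ to $\|a_1-a_2\|_{L^1(Y)}$ (Lemma \ref{lem:33101}). The key observation is that the homogenized problem \eqref{eq:33103} in one space dimension is exactly a scalar-coefficient problem of the form \eqref{eq:32101}, so the pointwise stability theorem applies with the scalar coefficient taken to be the homogenized constant $a_k^0$.

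First I would note that, since $a_k^0$ is a constant, the operator $\pp_x(a_k^0\pp_x\,\cdot)$ coincides with $a_k^0\Delta$, so the solution $u_k^0$ of \eqref{eq:33103} is precisely the solution $u_p$ of \eqref{eq:32101} with $p=a_k^0$ and the common initial datum $u_0=u_{0,1}=u_{0,2}$. I would also check that the homogenized constants lie in the admissible range: from $\nu\le a_k\le\mu$ we get $\frac1\mu\le\M_Y\!\left(\frac1{a_k}\right)\le\frac1\nu$, hence $\nu\le a_k^0=1/\M_Y\!\left(\frac1{a_k}\right)\le\mu$, so Theorem \ref{thm:32101} applies with $p=a_1^0$ and $q=a_2^0$ under the stated hypotheses on $u_0$. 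This yields
\begin{equation*}
|a_1^0-a_2^0|\le C\,|u_1^0(x_0,t_0)-u_2^0(x_0,t_0)|
\end{equation*}
with $C=C(\nu,\mu)$.

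Next I would invoke the lower estimate in Lemma \ref{lem:33101}, namely $C^{-1}\|a_1-a_2\|_{L^1(Y)}\le a_1^0-a_2^0$. Here the a priori ordering \eqref{eq:33101}, $a_1\ge a_2$, is used to guarantee $a_1^0\ge a_2^0$ by monotonicity of the harmonic mean, so that $a_1^0-a_2^0=|a_1^0-a_2^0|$ and the one-sided bound of Lemma \ref{lem:33101} is compatible with the absolute value produced by Theorem \ref{thm:32101}. Chaining the two inequalities then gives
\begin{equation*}
\|a_1-a_2\|_{L^1(Y)}\le C(a_1^0-a_2^0)=C\,|a_1^0-a_2^0|\le C'\,|u_1^0(x_0,t_0)-u_2^0(x_0,t_0)|,
\end{equation*}
which is the assertion. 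There is no genuine analytic obstacle, since all the substantive work is already contained in the two cited results; the only point requiring care is the bookkeeping of the constants and the correct use of \eqref{eq:33101} to pass between $a_1^0-a_2^0$ and its absolute value, so that the forward stability of Theorem \ref{thm:32101} can be combined with the one-sided estimate of Lemma \ref{lem:33101}.
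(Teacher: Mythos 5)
Your proposal is correct and follows essentially the same route as the paper: apply Theorem \ref{thm:32101} with $p=a_1^0$, $q=a_2^0$ to get $|a_1^0-a_2^0|\leq C|u_1^0(x_0,t_0)-u_2^0(x_0,t_0)|$, then chain with the lower bound of Lemma \ref{lem:33101}. Your additional checks --- that $\nu\leq a_k^0\leq\mu$ so the theorem is applicable, and that the a priori ordering \eqref{eq:33101} makes $a_1^0-a_2^0$ nonnegative so the one-sided bound of the lemma is compatible with the absolute value --- are exactly the points the paper leaves implicit.
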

\begin{proof}
By Theorem \ref{thm:32101}, we have
\begin{equation*}
\left|a_1^0-a_2^0\right|
\leq C \left| u_1^0(x_0,t_0) - u_2^0(x_0,t_0)\right|.
\end{equation*}
Combining this with the inequality in Lemma \ref{lem:33101}, 
we may get the stability estimate of this theorem. 
Thus the proof is complete. 
\end{proof}

Furthermore, we may consider 
another inverse problem of type II. 

\noindent
\textbf{Inverse Problem II-2}:
Let $x_0\in\Omega$ and $t_1\in (0,T)$ be arbitrarily chosen. 
Determine the diffusion coefficient $a_k$ of the problem \eqref{eq:33102} from the data $u_k^0(x_0,t)$, $0<t<t_1$ of the problem \eqref{eq:33103}.

By Theorems \ref{thm:32102}, \ref{thm:32103} and Lemma \ref{lem:33101}, 
we obtain the following two theorems on the uniqueness. 
\begin{prop}[Uniqueness]
\label{prop:33102}
Let $u_0:=u_{0,1}\equiv  u_{0,2}$ in $\Omega$
and let $u_0 \in H^2(\Omega) \cap H^1_0(\Omega)$.
We assume that there exists $n_0\in \mathbb{N}$ such that 
\begin{equation*}
P_{n_0}u_0(x_0) \ne 0,
\end{equation*}
where $P_{n_0}$ is the eigenprojection defined in \S3.2.1. 
If $u_1^0(x_0,t) = u_2^0(x_0,t)$ for $0<t<t_1$,
then we have $a_1=a_2$ a.e.\ on $Y$.
\end{prop}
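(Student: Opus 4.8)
The plan is to reduce this uniqueness statement for the periodic coefficients $a_1,a_2$ to the already established uniqueness for the constant homogenized coefficients, and then to invert the homogenization formula with the help of Lemma~\ref{lem:33101}. The key observation is that, since $a_k^0$ is a constant, in one space dimension we have $\pp_x(a_k^0\pp_x u_k^0)=a_k^0\,\pp_x^2 u_k^0=a_k^0\Delta u_k^0$, so the homogenized problem \eqref{eq:33103} for $u_k^0$ is exactly of the form \eqref{eq:32101} with scalar diffusion coefficient $p=a_k^0$ and common initial datum $u_0=u_{0,1}=u_{0,2}$.

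First I would invoke Theorem~\ref{thm:32102} directly. The hypothesis $P_{n_0}u_0(x_0)\neq 0$ is precisely the condition \eqref{eq:32103}, and the overdetermining data $u_1^0(x_0,t)=u_2^0(x_0,t)$ for $0<t<t_1$ match its assumption verbatim once we set $p=a_1^0$ and $q=a_2^0$. Theorem~\ref{thm:32102} therefore yields $a_1^0=a_2^0$, that is, the two homogenized constant coefficients coincide.

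It remains to pass from the equality of the homogenized coefficients back to the equality of the periodic coefficients. Here I would use the two-sided estimate of Lemma~\ref{lem:33101}, which under the standing a priori assumption \eqref{eq:33101} gives $C^{-1}\|a_1-a_2\|_{L^1(Y)}\le a_1^0-a_2^0$. Since $a_1^0-a_2^0=0$, this forces $\|a_1-a_2\|_{L^1(Y)}=0$ and hence $a_1=a_2$ a.e.\ on $Y$. The genuinely delicate point is exactly this last descent: the map $a\mapsto a^0=1/\M_Y(1/a)$ is a harmonic mean and is far from injective on general coefficients, so equality of the scalar outputs cannot by itself recover the profile. The ordering hypothesis \eqref{eq:33101} is what makes $a_1-a_2$ of one sign and thereby turns the lower bound of Lemma~\ref{lem:33101} into $L^1$ uniqueness. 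I expect no further obstacle, since the reduction to \eqref{eq:32101} and the application of Theorem~\ref{thm:32102} are routine once the constant-coefficient identification is noticed.
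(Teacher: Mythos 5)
Your proposal is correct and follows exactly the route the paper intends: the paper derives Proposition~\ref{prop:33102} precisely by applying Theorem~\ref{thm:32102} (with $p=a_1^0$, $q=a_2^0$, valid since the constant-coefficient homogenized problem \eqref{eq:33103} is of the form \eqref{eq:32101}) to obtain $a_1^0=a_2^0$, and then using the lower bound in Lemma~\ref{lem:33101} under the standing ordering assumption \eqref{eq:33101} to conclude $a_1=a_2$ a.e.\ on $Y$. You also correctly identify that the a priori ordering is the essential ingredient that makes the harmonic-mean inversion possible.
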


\begin{prop}[Uniqueness with unknown initial values]
\label{prop:33103}
We assume that $u_{0,k} \in H^2(\Omega) \cap H^1_0(\Omega)$ satisfy
\begin{equation*}
P_1 u_{0,1}(x_0)\ne 0\quad\text{\rm and}\quad
P_1 u_{0,2}(x_0)\ne 0
,
\end{equation*}
where $P_1$ is the eigenprojection defined in \S3.2.1.
If $u_1^0(x_0,t) = u_2^0(x_0,t)$ 
for $0<t<t_1$,
then we have $a_1=a_2$ a.e.\ on $Y$.
\end{prop}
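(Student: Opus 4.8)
The plan is to reduce the proposition to the already-established scalar uniqueness result Theorem \ref{thm:32103} and then transfer the conclusion from the homogenized coefficients back to the original periodic coefficients via Lemma \ref{lem:33101}. First I would observe that, since $a_k^0$ is a \emph{constant} and $N=1$, the divergence-form operator in \eqref{eq:33103} collapses to $\pp_x(a_k^0\pp_x u_k^0)=a_k^0\pp_x^2 u_k^0=a_k^0\Delta u_k^0$. Hence each homogenized problem \eqref{eq:33103} is exactly an instance of the scalar problem \eqref{eq:32104} with constant diffusion coefficient $p=a_1^0$ (respectively $q=a_2^0$) and initial value $u_{0,1}$ (respectively $u_{0,2}$), with $u_k^0$ playing the role of $\widetilde{u}_p$.

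Next I would check that the hypotheses of Theorem \ref{thm:32103} are met under this identification. The standing regularity $u_{0,k}\in H^2(\Omega)\cap H_0^1(\Omega)$ together with the nondegeneracy $P_1u_{0,1}(x_0)\neq0$ and $P_1u_{0,2}(x_0)\neq0$ are precisely condition \eqref{eq:32105}; here $P_1$ is the first Dirichlet eigenprojection of $\mathcal{A}=-\Delta=-\pp_x^2$ on $\Omega$ as introduced in \S3.2.1, which is exactly the $P_1$ named in the present statement. The hypothesis that the observations coincide, $u_1^0(x_0,t)=u_2^0(x_0,t)$ for $0<t<t_1$, is the equality $\widetilde{u}_p(x_0,t)=\widetilde{u}_q(x_0,t)$ on that interval. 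Theorem \ref{thm:32103} then yields $a_1^0=a_2^0$.

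Finally I would invoke Lemma \ref{lem:33101}, which holds under the a priori assumption \eqref{eq:33101} in force throughout \S3.3.1. Its left-hand bound $C^{-1}\|a_1-a_2\|_{L^1(Y)}\le a_1^0-a_2^0$, combined with $a_1^0-a_2^0=0$, forces $\|a_1-a_2\|_{L^1(Y)}=0$, that is $a_1=a_2$ a.e.\ on $Y$, which completes the argument.

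I do not anticipate a genuine obstacle, since the content is a clean concatenation of Theorem \ref{thm:32103} with Lemma \ref{lem:33101}. The only points demanding care are purely organizational: verifying that the one-dimensional constant-coefficient homogenized equation is covered \emph{verbatim} by the scalar framework of \S3.2.1 (so that the eigenprojection $P_1$ in \eqref{eq:32105} and in the proposition are literally the same operator), keeping the bookkeeping $p=a_1^0$, $q=a_2^0$ straight, and remembering that the monotonicity hypothesis \eqref{eq:33101} is exactly what makes the lower bound in Lemma \ref{lem:33101} available for passing from $a_1^0=a_2^0$ to $a_1=a_2$.
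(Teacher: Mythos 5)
Your proposal is correct and follows exactly the route the paper intends: the paper states that Propositions \ref{prop:33102} and \ref{prop:33103} follow from Theorems \ref{thm:32102}, \ref{thm:32103} and Lemma \ref{lem:33101}, which is precisely your concatenation of Theorem \ref{thm:32103} (giving $a_1^0=a_2^0$) with the lower bound of Lemma \ref{lem:33101} under the standing a priori assumption \eqref{eq:33101}. Your attention to the identification of the one-dimensional constant-coefficient homogenized equation with the scalar framework of \S3.2.1 and to the necessity of \eqref{eq:33101} is exactly the right bookkeeping.
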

%--------------------------------------------------
%
% 	Layered Material Case
%
%--------------------------------------------------
\subsubsection{Layered Material Case}
Now we consider the case of the layered material. Let $N\geq 2$. 
Let $\Omega=(0,\delta)\times D$ be the cylindrical domain, 
where $\delta>0$ and $D$ is a bounded domain in $\mathbb{R}^{N-1}$ with $C^2$-class boundary $\pp D$. 
Denote the element of $\Omega$ by $x=(x_1,\widetilde{x})\in\Omega$, where 
$x_1\in (0,\delta)$ and $\widetilde{x}=(x_2,\ldots,x_N)\in D$. 
We suppose that 
the diffusion coefficient matrix is the diagonal matrix 
depending on only one variable in space, that is, we assume that 
\begin{equation*}
A_k(y)
=
\begin{pmatrix}
p_k(y_1)	&0			&\cdots	&0\\
0		&a_2(y_1)	&		&\vdots\\
\vdots	&			&\ddots	&0\\
0		&\cdots		&0		&a_N(y_1)
\end{pmatrix}
,\quad
y=(y_1,\ldots,y_N)\in Y
\end{equation*}
satisfies 
\begin{equation*}
p_k,a_i\in L^\infty(0,\ell_1),\;
\nu\leq p_k(y_1),a_{i}(y_1)\leq \mu,\, y_1\in [0,\ell_1],\;
\text{$p_k,a_{i}$ are $(0,\ell_1)$-periodic}
\end{equation*}
for 
$k=1,2$,  
$i=2,\ldots,N$ 
with 
constants $\nu,\mu\in\mathbb{R}$ satisfying $0<\nu<\mu$.  
Hence $A_k$ satisfies the assumptions \eqref{eq:2201} for $k=1,2$. 

Let 
%$f^\vae\in L^2(0,T;H^{-1}(\Omega))$ and
$u_{0,k}^\vae \in L^2(\Omega)$ satisfy 
convergence \eqref{eq:3102} 
with 
%$f\in L^2(0,T;H^{-1}(\Omega))$ and
$u_{0,k} \in L^2(\Omega)$. 
We assume that 
$u_k^\vae$ satisfies the problem:
\begin{equation}
\label{eq:332102}
\left\{
\begin{aligned}
&
\pp_t^\alpha u_k^\vae(x,t)
-
\dd (A_k^\vae(x)\nabla u_k^\vae(x,t))
=
0,
&(x,t)\in\Omega\times(0,T)
,\\&
u_k^\vae(x,t)=0,
&(x,t)\in\pp\Omega\times(0,T)
,\\&
u_k^\vae(x,0)=u_{0,k}^\vae(x),
&x\in\Omega
.
\end{aligned}
\right.
\end{equation}

By Theorem \ref{thm:3101} and Lemma \ref{lem:3103}, 
we may see that 
$u_k^\vae$ converges to
the weak solution $u_k^0$ of the following problem in the sense of the convergences \eqref{eq:3103} and \eqref{eq:3104}:
\begin{equation}
\label{eq:332103}
\left\{
\begin{aligned}
&
\pp_t^\alpha u_k^0(x,t)
-
\dd (A_k^0\nabla u_k^0(x,t))
=
0,
&(x,t)\in\Omega\times(0,T)
,\\&
u_k^0(x,t)=0,
&(x,t)\in\pp\Omega\times(0,T)
,\\&
u_k^0(x,0)=u_{0,k}(x),
&x\in\Omega
,
\end{aligned}
\right.
\end{equation}
where 
$A_k^0$ is the homogenized coefficient matrix defined by the following diagonal matrix:
\begin{equation*}
A_k^0
=
\begin{pmatrix}
p_k^0&0		&\cdots	&0	\\
0		&a_2^0	&		&\vdots\\
\vdots	&		&\ddots	&0\\
0		&\cdots	&0		&a_N^0
\end{pmatrix}
\end{equation*}
with
\begin{equation*}
p_k^0=\frac{1}{\M_{(0,\ell_1)}\left(\frac1{p_k}\right)}
,\quad 
a_i^0=\M_{(0,\ell_1)}\left(a_i\right)
,\quad
i=2,\ldots,N
\end{equation*}
for $k=1,2$.

Let us investigate the following inverse problem of type I. 

\noindent
\textbf{Inverse Problem I}:
Let us choose 
a sub-domain $\omega\subset\Omega$ and 
an open interval $I\subset (0,T)$. 
Determine the constant diffusion coefficient $p_k^0$ of the problem \eqref{eq:332103} from the data
of the problem \eqref{eq:332102}
\begin{equation*}
\int_I\int_\omega u_k^\vae(x,t)\,dxdt.
\end{equation*}

Using Theorem \ref{thm:3101} and Theorem \ref{thm:32204}, 
we may prove the following theorem 
by the same manner used in the proof of Theorem \ref{thm:33100}. 
In the following theorem, 
we assume that 
\eqref{eq:332103} is derived from 
\eqref{eq:332102} by the homogenization.

\begin{thm}[Asymptotic Stability]
\label{thm:33200}
Let 
$u_0:=u_{0,1}=u_{0,2}$ in $\Omega$. 
We assume that $u_0\in H^2(\Omega) \cap H^1_0(\Omega)$ satisfies 
\begin{equation*}
u_0\not\equiv 0 \quad \text{\rm in $\Omega$},\quad 
\pp_1^2 u_0(x) \ge 0\;\text{\rm and}\; 
\Delta u_0(x) \ge 0 \quad \text{\rm a.e. $x\in \Omega$}.
\end{equation*}
Then there exists a constant
$C=C(\nu,\mu) > 0$ such that 
\begin{equation*}
\left|
p_1^0-p_2^0
\right|
\leq 
C \left| 
\int_I\int_\omega u_1^\vae(x,t)\,dxdt
-
\int_I\int_\omega u_2^\vae(x,t)\,dxdt
\right|
+
\theta(\vae)
\end{equation*}
for all $\vae>0$. 
Here
$\theta(\vae)\to 0$ as $\vae\to 0$.
\end{thm}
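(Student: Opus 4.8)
The plan is to mirror exactly the argument used for Theorem \ref{thm:33100}, replacing the one-dimensional scalar stability input by its layered-material counterpart. The first observation to record is that the two homogenized problems \eqref{eq:332103} fit precisely the template \eqref{eq:32201} to which Theorem \ref{thm:32204} applies: the homogenized matrix $A_k^0$ is diagonal with the single varying entry $p_k^0$ in the $(1,1)$-slot, while the remaining diagonal entries $a_2^0,\ldots,a_N^0=\M_{(0,\ell_1)}(a_2),\ldots,\M_{(0,\ell_1)}(a_N)$ are independent of $k$ because $a_2,\ldots,a_N$ do not depend on $k$. Moreover the common initial datum $u_0$ satisfies \eqref{eq:32205}, and the bounds $\nu\leq p_k,a_i\leq\mu$ force $p_k^0,a_i^0\in[\nu,\mu]$, so the constant produced by Theorem \ref{thm:32204} depends only on $(\nu,\mu)$.

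With this in hand, I would first apply Theorem \ref{thm:32204} to the pair $u_1^0,u_2^0$ to obtain
\begin{equation*}
|p_1^0-p_2^0|
\leq
C\left|\int_I\int_\omega u_1^0(x,t)\,dxdt-\int_I\int_\omega u_2^0(x,t)\,dxdt\right|.
\end{equation*}
Then I would insert the periodic solutions by the triangle inequality, writing the right-hand integrand as $(u_1^0-u_1^\vae)+(u_1^\vae-u_2^\vae)+(u_2^\vae-u_2^0)$, which splits the bound into the observable data term together with the two approximation errors $\left|\int_I\int_\omega(u_k^0-u_k^\vae)\,dxdt\right|$ for $k=1,2$.

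Each approximation error is controlled by the Cauchy--Schwarz inequality over the set $I\times\omega$ of finite measure, giving $\left|\int_I\int_\omega(u_k^0-u_k^\vae)\,dxdt\right|\leq C\|u_k^\vae-u_k^0\|_{L^2(0,T;L^2(\Omega))}$. Setting $\theta(\vae)=C\left(\|u_1^\vae-u_1^0\|_{L^2(0,T;L^2(\Omega))}+\|u_2^\vae-u_2^0\|_{L^2(0,T;L^2(\Omega))}\right)$ then yields the asserted inequality, and the strong convergence \eqref{eq:3104} from Theorem \ref{thm:3101}, applied to each sequence $\{u_k^\vae\}$, guarantees $\theta(\vae)\to0$ as $\vae\to0$. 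Since the whole argument is a direct transcription of the proof of Theorem \ref{thm:33100}, I do not anticipate any genuine difficulty; the only point demanding care is the verification at the outset that \eqref{eq:332103} really has the structural form \eqref{eq:32201}, that is, that the diagonal homogenized matrices differ only in their first diagonal entry, so that Theorem \ref{thm:32204} is applicable with a constant independent of $\vae$.
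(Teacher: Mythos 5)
Your proposal is correct and follows essentially the same route as the paper, which simply states that Theorem \ref{thm:33200} is proved ``by the same manner used in the proof of Theorem \ref{thm:33100}'' using Theorem \ref{thm:3101} and Theorem \ref{thm:32204}. Your preliminary check that the homogenized problems \eqref{eq:332103} differ only in the first diagonal entry $p_k^0$ (so that Theorem \ref{thm:32204} applies with a constant depending only on $\nu,\mu$) is a detail the paper leaves implicit, and is a worthwhile addition.
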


Let us formulate our 
inverse problem of type II. 

\noindent
\textbf{Inverse Problem II-1}:
Let $x_0\in\Omega$ and $t_0\in (0,T)$ be arbitrarily chosen.
Determine the diffusion coefficient $p_k$ of the problem \eqref{eq:332102} from the data $u_k^0(x_0,t_0)$ of the problem \eqref{eq:332103}.

Calculating in the same way 
as Lemma \ref{lem:33101}, 
we obtain the following lemma 
on the relation between $p_k$ and $p_k^0$.
To prove the lemma, 
We assume the condition:
\begin{equation}
\label{eq:332101}
p_1(y_1)\geq p_2(y_1)
,\quad
y_1\in [0,\ell_1]
.
\end{equation}

In the followings, 
we assume that 
\eqref{eq:332103} is derived from 
\eqref{eq:332102} by the homogenization 
under its assumptions and \eqref{eq:332101}.

\begin{lem}
\label{lem:332101}
We have
\begin{equation*}
C^{-1}
\left\|
p_1-p_2
\right\|_{L^1(0,\ell_1)}
\leq
p_1^0-p_2^0
\leq
C
\left\|
p_1-p_2
\right\|_{L^1(0,\ell_1)}
\end{equation*}
with the constant $C>0$ depending on only $\nu,\mu,\ell_1$. 
\end{lem}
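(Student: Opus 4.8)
The plan is to mirror the computation in the proof of Lemma \ref{lem:33101}, since in the layered case the homogenized scalar $p_k^0=1/\M_{(0,\ell_1)}(1/p_k)$ has exactly the same harmonic-mean structure as $a_k^0$ there. First I would record the elementary pointwise bounds coming from $\nu\le p_k(y_1)\le\mu$: these give $\tfrac1\mu\le \tfrac1{p_k}\le\tfrac1\nu$, hence $\tfrac1\mu\le \M_{(0,\ell_1)}(1/p_k)\le\tfrac1\nu$, and therefore $\nu\le p_k^0\le\mu$ for $k=1,2$.

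Next I would write the difference as a single fraction,
\[
p_1^0-p_2^0
=
\frac{\M_{(0,\ell_1)}(1/p_2)-\M_{(0,\ell_1)}(1/p_1)}
{\M_{(0,\ell_1)}(1/p_1)\,\M_{(0,\ell_1)}(1/p_2)},
\]
and rewrite the numerator as $\frac1{\ell_1}\int_0^{\ell_1}\frac{p_1-p_2}{p_1p_2}\,dy_1$. The a priori assumption \eqref{eq:332101} that $p_1\ge p_2$ guarantees the integrand is nonnegative, so $p_1^0-p_2^0\ge 0$; this is what makes the claimed two-sided estimate hold without absolute values on the left-hand quantity.

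The two bounds then follow by inserting $\nu^2\le p_1p_2\le\mu^2$ in the numerator together with the bounds $\nu^2\le 1/(\M_{(0,\ell_1)}(1/p_1)\,\M_{(0,\ell_1)}(1/p_2))\le\mu^2$ on the reciprocal of the denominator. This yields
\[
\frac{\nu^2}{\mu^2\ell_1}\,\|p_1-p_2\|_{L^1(0,\ell_1)}
\le
p_1^0-p_2^0
\le
\frac{\mu^2}{\nu^2\ell_1}\,\|p_1-p_2\|_{L^1(0,\ell_1)},
\]
from which the lemma follows by choosing $C=\max\{\mu^2\ell_1/\nu^2,\ \mu^2/(\nu^2\ell_1)\}$, a constant depending only on $\nu,\mu,\ell_1$.

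There is essentially no genuine obstacle here, as the argument is a direct repetition of Lemma \ref{lem:33101} with $a_k$ replaced by $p_k$. The only point requiring care is bookkeeping: tracking which of $\nu$ or $\mu$ appears in the numerator versus the denominator for each of the two inequalities, and then selecting $C$ large enough to absorb both directions simultaneously.
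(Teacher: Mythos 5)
Your proposal is correct and follows essentially the same route as the paper: the paper proves Lemma~\ref{lem:332101} by declaring it is obtained ``calculating in the same way as Lemma~\ref{lem:33101}'', whose proof is precisely the harmonic-mean computation you carry out, namely writing $p_1^0-p_2^0$ as a single fraction, rewriting the numerator as $\frac{1}{\ell_1}\int_0^{\ell_1}\frac{p_1-p_2}{p_1p_2}\,dy_1$, and sandwiching numerator and denominator using $\nu\le p_k\le\mu$ together with the sign condition \eqref{eq:332101}. The only cosmetic difference is that you derive the upper bound explicitly by the symmetric estimate, whereas the paper cites the analogue of \eqref{eq:33100}; the content is identical.
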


We may obtain the following stability result on our inverse problem 
by Theorem \ref{thm:32201} and Lemma \ref{lem:332101}. 

\begin{prop}[Stability]
Let 
$u_0:=u_{0,1}=u_{0,2}$ in $\Omega$. 
We assume that $u_0\in H^2(\Omega) \cap H^1_0(\Omega)$ satisfies 
\begin{equation*}
u_0\not\equiv 0 \quad \text{\rm in $\Omega$},\quad 
\pp_1^2 u_0(x) \ge 0\;\text{\rm and}\; 
\Delta u_0(x) \ge 0 \quad \text{\rm a.e. $x\in \Omega$}.
\end{equation*}
Then there exists a constant
$C=C(\nu,\mu, \ell_1) > 0$ such that 
\begin{equation*}
\|p_1-p_2\|_{L^1(0,\ell_1)} \leq C | u_1^0(x_0,t_0) - u_2^0(x_0,t_0)|.
\end{equation*}
\end{prop}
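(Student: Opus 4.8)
The plan is to deduce this stability estimate by chaining together two results already established for the layered material, in exact parallel with the proof of Proposition~\ref{prop:33101} in the one-dimensional setting. The two ingredients are the stability bound for the \emph{homogenized} coefficient $p_k^0$ coming from Theorem~\ref{thm:32201}, and the two-sided Lipschitz comparison between $p_k$ and $p_k^0$ furnished by Lemma~\ref{lem:332101}.

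First I would note that $u_k^0$ solves the homogenized problem \eqref{eq:332103} whose coefficient matrix $A_k^0$ is diagonal with entries $p_k^0, a_2^0, \ldots, a_N^0$, and that only the leading entry $p_k^0$ depends on $k$ (the remaining entries $a_i^0 = \M_{(0,\ell_1)}(a_i)$ are common to $k=1,2$ since $a_2, \ldots, a_N$ are). This places us precisely in the framework of Theorem~\ref{thm:32201}, with the scalar unknown $p = p_k^0$, the fixed transverse coefficients $b_i = a_i^0$, and the common initial datum $u_0 = u_{0,1} = u_{0,2}$ satisfying \eqref{eq:32205}. Applying that theorem would then yield $|p_1^0 - p_2^0| \le C|u_1^0(x_0,t_0) - u_2^0(x_0,t_0)|$ with $C = C(\nu,\mu)$.

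Second I would invoke Lemma~\ref{lem:332101}, whose left-hand inequality gives $\|p_1 - p_2\|_{L^1(0,\ell_1)} \le C(p_1^0 - p_2^0)$ with $C = C(\nu,\mu,\ell_1)$. Under the a priori ordering \eqref{eq:332101}, namely $p_1(y_1) \ge p_2(y_1)$ on $[0,\ell_1]$, the monotonicity of the harmonic mean forces $p_1^0 \ge p_2^0$, so that $p_1^0 - p_2^0 = |p_1^0 - p_2^0|$. Concatenating the two inequalities then produces $\|p_1 - p_2\|_{L^1(0,\ell_1)} \le C|u_1^0(x_0,t_0) - u_2^0(x_0,t_0)|$ with a constant depending only on $\nu,\mu,\ell_1$, which is the claim.

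The proof is therefore essentially a one-line concatenation, and there is no serious analytic obstacle; the only points requiring care are bookkeeping. One must check that the separated structure of $A_k^0$ really matches the hypotheses of Theorem~\ref{thm:32201}, in particular that the transverse entries $a_i^0$ are genuinely $k$-independent constants, so that the two solutions $u_1^0, u_2^0$ differ only through the scalar $p_k^0$; and one must reconcile the signed difference $p_1^0 - p_2^0$ appearing in Lemma~\ref{lem:332101} with the absolute value delivered by Theorem~\ref{thm:32201}, which is exactly what the ordering assumption \eqref{eq:332101} is there to supply.
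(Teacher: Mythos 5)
Your proposal is correct and follows exactly the paper's route: the paper likewise obtains the estimate by combining Theorem \ref{thm:32201} (applied to the homogenized problems, whose coefficient matrices differ only in the leading diagonal entry $p_k^0$) with the two-sided comparison of Lemma \ref{lem:332101}, using the a priori ordering \eqref{eq:332101} to identify $p_1^0-p_2^0$ with $|p_1^0-p_2^0|$. Your bookkeeping remarks about the $k$-independence of the transverse entries $a_i^0$ are accurate and consistent with what the paper implicitly relies on.
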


Now we investigate 
another inverse problem of type II. 

\noindent
\textbf{Inverse Problem II-2}:
Let $x_0\in\Omega$ and $t_1\in (0,T)$ be arbitrarily chosen. 
Determine the diffusion coefficient $p_k$ of the problem \eqref{eq:332102} from the data $u_k^0(x_0,t)$, $0<t<t_1$ of the problem \eqref{eq:332103}.

Then we have the uniqueness result with unknown initial values 
by Theorem \ref{thm:32203} and Lemma \ref{lem:332101}. 

\begin{prop}[Uniqueness with unknown initial values]
Let
\begin{equation*}
\mathcal{M}_{(0,\ell_1)}\left(\frac1{p_k}\right)\leq 1,
\end{equation*}
that is, $p_k^0\geq 1$ for $k=1,2$. 
We assume that $u_{0,k} \in H^2(\Omega) \cap H^1_0(\Omega)$ satisfy
\begin{equation*}
P_1 u_{0,1}(x_0)\ne 0\quad\text{\rm and}\quad
P_1 u_{0,2}(x_0)\ne 0
,
\end{equation*}
where 
$P_1$ is the eigenprojection defined in \S3.2.2.
If $u_{1}^0(x_0,t) = u_{2}^0(x_0,t)$ 
for $0<t<t_1$,
then we have $p_1=p_2$ a.e.\ on $(0,\ell_1)$.
\end{prop}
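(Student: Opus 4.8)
The plan is to reduce this uniqueness statement to the scalar-coefficient uniqueness result for layered materials, Theorem \ref{thm:32203}, and then to convert the resulting equality of homogenized coefficients into equality of the periodic coefficients through Lemma \ref{lem:332101}.

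First I would observe that the two homogenized problems \eqref{eq:332103} are instances of the problem \eqref{eq:32206} treated in \S3.2.2. Indeed, $A_k^0$ is the diagonal matrix with first entry $p_k^0$ and remaining entries $a_i^0=\M_{(0,\ell_1)}(a_i)$ for $i=2,\ldots,N$. The decisive point is that the entries $a_i^0$ are independent of $k$, since $a_i$ is shared by $A_1$ and $A_2$. Hence both homogenized equations possess the same transverse operator $\widetilde{\mathcal{B}}$, the same separated eigenbasis $\varphi_{n,m}=\Phi_n\widetilde\Phi_m$, and---because $p\Lambda_n+\widetilde\Lambda_m$ is minimized only at $n=m=1$ for every admissible scalar $p>0$---the same simple first eigenspace $\mathrm{span}(\varphi_{1,1})$ and the same eigenprojection $P_1$. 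Thus the hypothesis $P_1u_{0,k}(x_0)\neq 0$ is unambiguous and coincides with \eqref{eq:32207} in Theorem \ref{thm:32203}, with the identification $p=p_1^0$ and $q=p_2^0$.

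Next I would check the remaining hypotheses of Theorem \ref{thm:32203}. The condition $p,q\geq 1$ there reads $p_1^0\geq 1$ and $p_2^0\geq 1$, which is exactly the standing assumption $\M_{(0,\ell_1)}(1/p_k)\leq 1$. Since the observed data agree, $u_1^0(x_0,t)=u_2^0(x_0,t)$ for $0<t<t_1$, Theorem \ref{thm:32203} applies and yields $p_1^0=p_2^0$. Finally I would invoke Lemma \ref{lem:332101}, which under the a priori ordering \eqref{eq:332101} gives $C^{-1}\|p_1-p_2\|_{L^1(0,\ell_1)}\leq p_1^0-p_2^0$; since the right-hand side vanishes, we obtain $\|p_1-p_2\|_{L^1(0,\ell_1)}=0$, that is, $p_1=p_2$ a.e.\ on $(0,\ell_1)$.

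The only genuine subtlety, rather than an obstacle, lies in the first step: one must argue explicitly that the separated spectral data and the projection $P_1$ used to state and apply Theorem \ref{thm:32203} are independent of the unknown scalar $p_k^0$, so that the two homogenized problems are handled within one fixed spectral framework. Once this identification is secured, both the spectral-uniqueness step and the harmonic-mean inversion step follow at once.
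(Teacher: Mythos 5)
Your proposal is correct and follows exactly the route the paper intends: the paper derives this proposition precisely as a combination of Theorem \ref{thm:32203} (applied to the homogenized problems with $p=p_1^0$, $q=p_2^0$, the hypothesis $p,q\geq 1$ being supplied by $\M_{(0,\ell_1)}(1/p_k)\leq 1$) with Lemma \ref{lem:332101} under the standing a priori ordering \eqref{eq:332101}. Your additional remark that the transverse coefficients $a_i^0$, and hence the operator $\widetilde{\mathcal{B}}$ and the eigenprojection $P_1$, are independent of $k$ is a useful explicit check that the paper leaves implicit.
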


%SSSSSSSSSSSSSSSSSSSSSSSSSSSSSSSSSSSSSSSSSSSSSSSSSS
%
%	Proofs of Main Results
%
%SSSSSSSSSSSSSSSSSSSSSSSSSSSSSSSSSSSSSSSSSSSSSSSSSS
\section{Proofs of Main Results}
%ssssssssssssssssssssssssssssssssssssssssssssssssss
%
%	On the Homogenization
%
%ssssssssssssssssssssssssssssssssssssssssssssssssss
\subsection{Proof of Theorem \ref{thm:3101}}
We prove the result on the homogenization in \S3.1, Theorem \ref{thm:3101} by the oscillating test function method by Tartar \cite{T1978}. 

First 
we will consider the convergences \textrm{(i), (ii)} of \eqref{eq:3102} by using the boundedness of $\{u^\vae\}$. 
We note that 
$u^\vae$ satisfies the assumptions of 
Theorem \ref{thm:well-posedness_FDE} by 
$A^\vae\in M_S(\nu,\mu,\Omega)$ for each $\vae>0$. 
Hence the estimate \eqref{eq:estimate01} holds 
for $u^\vae \in W^\alpha(u_0^\vae)$, 
that is, 
\begin{equation*}
\|u^\vae\|_{\W^\alpha(u_0^\vae)}
\leq
C
\left(
\|u_0^\vae\|_{L^2(\Omega)}
+
\|f^\vae\|_{L^2(0,T;H^{-1}(\Omega))}
\right)
.
\end{equation*}
By the convergences \eqref{eq:3102} 
of $\{u_0^\vae\}$ and $\{f^\vae\}$, 
$\{u_0^\vae\}$ and $\{f^\vae\}$ are bounded in 
$L^2(\Omega)$ and $L^2(0,T;H^{-1}(\Omega))$,respectively. 
Moreover, by the equivalence of the norms \eqref{eq:2101}, 
we have
\begin{equation}
\label{eq:homo_pr01}
\|u^\vae\|_{L^2(0,T;H_0^1(\Omega))}
+
\|\pp_t^\alpha(u^\vae-u_0^\vae)\|_{L^2(0,T;H^{-1}(\Omega))}
\leq M
\end{equation}
with a positive constant $M$ 
which is independent of $\vae>0$.
Therefore, we may choose the subsequence of 
$\{u^\vae\}$  satisfying:
there exists $u^0\in L^2(0,T;H_0^1(\Omega))$
and
$\widehat{u}\in L^2(0,T;H^{-1}(\Omega))$ such that 
\begin{align}
\label{eq:homo_pr02}
&
u^\vae
\wto
u^0
\ \utext{weakly in $L^2(0,T;H_0^1(\Omega))$},
\\
\label{eq:homo_pr03}
&
\pp_t^\alpha(u^\vae-u_0^\vae)
\wto
\widehat{u}
\ \utext{weakly in $L^2(0,T;H^{-1}(\Omega))$}
\end{align}
as $\vae \to 0$.
Here, we still denote the subsequence by $\vae$. 
This is because we will show that 
the convergences hold for the whole sequences at the end of the proof.
Now we show that $\alpha$-th order fractional differentiability of $u^0-u_0$ 
with respect to $t$ and that 
$\widehat{u}=\pp_t^\alpha (u^0-u_0)$ 
by the argument used in Kubica and Yamamoto \cite{KY2018}. 
Using the weak convergence \eqref{eq:homo_pr03}, 
the property of the fractional derivative \eqref{eq:2102} and 
integrating by parts, 
for any $\psi\in \D(0,T)$ and 
$\Phi\in H_0^1(\Omega)$, we have
\begin{align*}
&
\int_0^T
\left\<
\widehat{u}(\cdot,t),\Phi
\right\>_{H^{-1}(\Omega),H_0^1(\Omega)}
\psi(t)
\,dt
\\&=
\lim_{\vae\to 0}
\int_0^T
\left\<
\pp_t^\alpha(u^\vae-u_0^\vae)(\cdot,t),\Phi
\right\>_{H^{-1}(\Omega),H_0^1(\Omega)}
\psi(t)
\,dt
\\&=
\lim_{\vae\to 0}
\int_0^T
\left\<
\pp_t J^{1-\alpha}(u^\vae-u_0^\vae)(\cdot,t),\Phi
\right\>_{H^{-1}(\Omega),H_0^1(\Omega)}
\psi(t)
\,dt
\\&=
\lim_{\vae\to 0}
\int_0^T
\pp_t
\left\<
J^{1-\alpha}(u^\vae-u_0^\vae)(\cdot,t),\Phi
\right\>_{H^{-1}(\Omega),H_0^1(\Omega)}
\psi(t)
\,dt
\\&=
-
\lim_{\vae\to 0}
\int_0^T
\int_\Omega
J^{1-\alpha}(u^\vae-u_0^\vae)(x,t)\Phi(x)
\psi^\prime(t)
\,dx
dt
\\&=
-
\int_0^T
\int_\Omega
J^{1-\alpha}(u^0-u_0)(x,t)\Phi(x)
\psi^\prime(t)
\,dx
dt
.
\end{align*}
%\note{Used\\Theorem 3.58 in \cite{CD1999}}%
In the last inequality, 
we used the weak continuity of $J^{1-\alpha}$ %\note{cite SKM?}
and the weak convergences \eqref{eq:homo_pr02} and \eqref{eq:3102} 
of $\{u^\vae\}$ and $\{u_0^\vae\}$. 
Thus we obtain
\begin{align*}
&
\int_0^T
\left\<
\widehat{u}(\cdot,t),\Phi
\right\>_{H^{-1}(\Omega),H_0^1(\Omega)}
\psi(t)
\,dt
\\&=
-
\int_0^T
\left\<
J^{1-\alpha}(u^0-u_0)(\cdot,t),\Phi
\right\>_{H^{-1}(\Omega),H_0^1(\Omega)}
\psi^\prime(t)
\,dt
.
\end{align*}
This implies that 
the fractional differentiability of $u^0-u_0$ 
in the weak sense and that
$\widehat{u}
=\pp_t J^{1-\alpha}(u^0-u_0)
=\pp_t^\alpha(u^0-u_0)\in L^2(0,T;H^{-1}(\Omega))$. 
Therefore, we may replace the convergence \eqref{eq:homo_pr03} by
\begin{equation}
\label{eq:homo_pr04}
\pp_t^\alpha(u^\vae-u_0^\vae)
\wto
\pp_t^\alpha(u^0-u_0)
\ \utext{weakly in $L^2(0,T;H^{-1}(\Omega))$}
.
\end{equation}

%Since the embedding 
%$\W^\alpha(u_0^\vae)\hookrightarrow L^2(0,T;L^2(\Omega))$ is a compact operator (see Remark \ref{rmk:2102}), 
Since $\{u^\vae\}$ and $\{u_0^\vae\}$ are bounded 
in $\W^\alpha(u_0^\vae)$ and $L^2(\Omega)$, respectively, 
$\{u^\vae\}$ is bounded in $\mathrm{W}^\alpha$ 
by Remark \ref{rmk:2102}. 
Using Lemma \ref{lem:2101}, 
we can choose the subsequence of $\{u^\vae\}$ again (still denoted by $\vae$) such that 
\begin{equation}
\label{eq:homo_pr05}
u^\vae
\to
u^0
\ \utext{strongly in $L^2(0,T;L^2(\Omega))$},
\end{equation}
where the limit coincides with $u^0$ by the uniqueness of the weak limit. 

Next we consider the convergence \text{(iii)} of \eqref{eq:3102} and we will prove that
$u^0$ is the weak solution 
of the problem \eqref{eq:3105}. 
We define 
\begin{equation*}
\zeta^\vae(x,t)
=
\left(
\zeta_1^\vae(x,t)
,\ldots,
\zeta_N^\vae(x,t)
\right)
=
A^\vae(x)
\nabla u^\vae(x,t).
\end{equation*}
Since $A^\vae\in M_S(\nu,\mu,\Omega)$, we have
\begin{equation*}
\|\zeta^\vae\|_{(L^2(0,T;L^2(\Omega)))^N}
\leq \mu M
\end{equation*}
by the boundedness \eqref{eq:homo_pr01}. 
Hence we may choose the subsequence of $\{\zeta^\vae\}$ 
(still denoted by $\vae$) satisfying:
there exists $\zeta^0\in (L^2(0,T;L^2(\Omega)))^N$ such that 
\begin{equation}
\label{eq:homo_pr06}
\zeta^\vae
\wto
\zeta^0
\ \utext{weakly in $(L^2(0,T;L^2(\Omega)))^N$}
.
\end{equation}
By the definition of $\zeta^\vae$ and 
the variational formulation of the problem \eqref{eq:3101}, 
$\zeta^\vae$ satisfies
\begin{align*}
&
\int_0^T
\left\<
\pp_t^\alpha(u^\vae-u_0^\vae)(\cdot,t),\Phi
\right\>_{H^{-1}(\Omega),H_0^1(\Omega)}
\psi(t)
\,dt
+
\int_0^T
\int_\Omega
\zeta^\vae(x,t)
\cdot \nabla\Phi(x)
\psi(t)
\,dxdt
\\&=
\int_0^T
\left\<
f^\vae(\cdot,t),\Phi
\right\>_{H^{-1}(\Omega),H_0^1(\Omega)}
\psi(t)
\,dt
\quad\text{for any $\psi\in \D(0,T)$ and 
$\Phi\in H_0^1(\Omega)$},
\end{align*} 
that is, 
\begin{align}
\label{eq:homo_pr07}
&
\left\<
\pp_t^\alpha(u^\vae-u_0^\vae),\Phi\psi
\right\>_{L^2(0,T;H^{-1}(\Omega)),L^2(0,T;H_0^1(\Omega))}
\\\nonumber
&
+
\int_0^T
\int_\Omega
\zeta^\vae(x,t)
\cdot \nabla\Phi(x)
\psi(t)
\,dxdt
\\\nonumber
&=
\left\<
f^\vae,\Phi\psi
\right\>_{L^2(0,T;H^{-1}(\Omega)),L^2(0,T;H_0^1(\Omega))}
\ \text{for any $\psi\in \D(0,T)$ and 
$\Phi\in H_0^1(\Omega)$}
.
\end{align}
Taking the limit on the above equality 
by the weak convergences \textrm{(ii)} of \eqref{eq:3102}, 
\eqref{eq:homo_pr04} and \eqref{eq:homo_pr06}, 
we obtain
\begin{align}
\label{eq:homo_pr08}
&
\left\<
\pp_t^\alpha(u^0-u_0),\Phi\psi
\right\>_{L^2(0,T;H^{-1}(\Omega)),L^2(0,T;H_0^1(\Omega))}
\\\nonumber
&
+
\int_0^T
\int_\Omega
\zeta^0(x,t)
\cdot \nabla\Phi(x)
\psi(t)
\,dxdt
\\\nonumber
&=
\left\<
f,\Phi\psi
\right\>_{L^2(0,T;H^{-1}(\Omega)),L^2(0,T;H_0^1(\Omega))}
\ \text{for any $\psi\in \D(0,T)$ and 
$\Phi\in H_0^1(\Omega)$}.
\end{align}
Thus, it is sufficient to show that 
\begin{equation}
\label{eq:homo_pr09}
\zeta^0=A^0\nabla u^0
\end{equation}
to prove 
the convergence \text{(iii)} of \eqref{eq:3102} and 
that $u^0$ is the the weak solution 
of the problem \eqref{eq:3105}. 
Indeed, 
by substituting this equality \eqref{eq:homo_pr09} into the equation \eqref{eq:homo_pr08}, 
we may see that $u^0$ is the weak solution of the the problem \eqref{eq:3105}.

Let $\varphi\in\D(\Omega)$ and $\psi\in\D(0,T)$. 
Integrating the equality \eqref{eq:2211} on 
$(0,T)$ with $\Phi=u^\vae \varphi\psi$, 
we obtain
\begin{equation*}
\int_0^T
\int_\Omega
\eta_\xi^\vae(x)\cdot
\nabla u^\vae(x,t)\varphi(x)\psi(t)
\,dxdt
+
\int_0^T
\int_\Omega
\eta_\xi^\vae(x)\cdot
\nabla \varphi(x) u^\vae(x,t)\psi(t)
\,dxdt
=0
.
\end{equation*}
Hence we have
\begin{align*}
&
\int_0^T
\int_\Omega
\zeta^\vae(x,t)\cdot
\nabla w_\xi^\vae(x,t)\varphi(x)\psi(t)
\,dxdt
\\&=
\int_0^T
\int_\Omega
A^\vae(x)\nabla u^\vae(x,t)
\cdot
\nabla w_\xi^\vae(x)\varphi(x)\psi(t)
\,dxdt
\\&=
\int_0^T
\int_\Omega
A^\vae(x)\nabla w_\xi^\vae(x)
\cdot
\nabla u^\vae(x,t)\varphi(x)\psi(t)
\,dxdt
\\&=
\int_0^T
\int_\Omega
\eta_\xi^\vae(x)\cdot
\nabla u^\vae(x,t)\varphi(x)\psi(t)
\,dxdt
\\&=
-
\int_0^T
\int_\Omega
\eta_\xi^\vae(x)\cdot
\nabla \varphi(x) u^\vae(x,t)\psi(t)
\,dxdt
.
\end{align*}
Choosing $\Phi=\varphi w_\xi^\vae$ in the equality \eqref{eq:homo_pr07} 
and using the above equality, we obtain
\begin{align}
\label{eq:homo_pr10}
&
\left\<
\pp_t^\alpha(u^\vae-u_0^\vae), w_\xi^\vae\varphi\psi
\right\>_{L^2(0,T;H^{-1}(\Omega)),L^2(0,T;H_0^1(\Omega))}
\\\nonumber
&
+
\int_0^T
\int_\Omega
\zeta^\vae(x,t)
\cdot \nabla\varphi(x)
w_\xi^\vae\psi(t)
\,dxdt
\\\nonumber
&
-
\int_0^T
\int_\Omega
\eta_\xi^\vae(x)\cdot
\nabla \varphi(x) u^\vae(x,t)\psi(t)
\,dxdt
\\\nonumber
&=
\left\<
f^\vae,w_\xi^\vae\varphi\psi
\right\>_{L^2(0,T;H^{-1}(\Omega)),L^2(0,T;H_0^1(\Omega))}
.
\end{align}
To pass the limit in the above equation \eqref{eq:homo_pr10}, we show that 
\begin{align}
\label{eq:homo_pr11}
&
\lim_{\vae\to 0}
\left\<
\pp_t^\alpha(u^\vae-u_0^\vae), w_\xi^\vae\varphi\psi
\right\>_{L^2(0,T;H^{-1}(\Omega)),L^2(0,T;H_0^1(\Omega))}
\\\nonumber&=
\left\<
\pp_t^\alpha(u^0-u_0), (\xi\cdot x)\varphi\psi
\right\>_{L^2(0,T;H^{-1}(\Omega)),L^2(0,T;H_0^1(\Omega))}
.
\end{align}
By $\pp_t^\alpha(u^\vae-u_0^\vae)=\pp_t J^{1-\alpha}(u^\vae-u_0^\vae)$ and the integration by parts, we have
\begin{align*}
&
\lim_{\vae\to 0}
\left\<
\pp_t^\alpha(u^\vae-u_0^\vae), w_\xi^\vae\varphi\psi
\right\>_{L^2(0,T;H^{-1}(\Omega)),L^2(0,T;H_0^1(\Omega))}
\\&=
\lim_{\vae\to 0}
\int_0^T
\left\<
\pp_t J^{1-\alpha}(u^\vae-u_0^\vae)(\cdot,t), w_\xi^\vae\varphi
\right\>_{H^{-1}(\Omega),H_0^1(\Omega)}
\psi(t)
\,dt
\\&=
\lim_{\vae\to 0}
\int_0^T
\pp_t
\left\<
J^{1-\alpha}(u^\vae-u_0^\vae)(\cdot,t), w_\xi^\vae\varphi
\right\>_{H^{-1}(\Omega),H_0^1(\Omega)}
\psi(t)
\,dt
\\&=
-
\lim_{\vae\to 0}
\int_0^T
\left\<
J^{1-\alpha}(u^\vae-u_0^\vae)(\cdot,t), w_\xi^\vae\varphi
\right\>_{H^{-1}(\Omega),H_0^1(\Omega)}
\psi^\prime(t)
\,dt
\\&=
-
\lim_{\vae\to 0}
\int_0^T
\int_\Omega
J^{1-\alpha}(u^\vae-u_0^\vae)(x,t) w_\xi^\vae(x)\varphi(x)
\psi^\prime(t)
\,dxdt
\\&=
-
\int_0^T
\int_\Omega
J^{1-\alpha}(u^0-u_0)(x,t) 
(\xi\cdot x)\varphi(x)
\psi^\prime(t)
\,dxdt
\\&=
-
\int_0^T
\left\<
J^{1-\alpha}(u^0-u_0)(\cdot,t), 
(\xi\cdot x)\varphi
\right\>_{H^{-1}(\Omega),H_0^1(\Omega)}
\psi^\prime(t)
\,dt
\\&=
\int_0^T
\pp_t
\left\<
J^{1-\alpha}(u^0-u_0)(\cdot,t), 
(\xi\cdot x)\varphi
\right\>_{H^{-1}(\Omega),H_0^1(\Omega)}
\psi(t)
\,dt
\\&=
\int_0^T
\left\<
\pp_t
J^{1-\alpha}(u^0-u_0)(\cdot,t), 
(\xi\cdot x)\varphi
\right\>_{H^{-1}(\Omega),H_0^1(\Omega)}
\psi(t)
\,dt
\\&=
\int_0^T
\left\<
\pp_t^\alpha(u^0-u_0)(\cdot,t), 
(\xi\cdot x)\varphi
\right\>_{H^{-1}(\Omega),H_0^1(\Omega)}
\psi(t)
\,dt
\\&=
\left\<
\pp_t^\alpha(u^0-u_0), (\xi\cdot x)\varphi\psi
\right\>_{L^2(0,T;H^{-1}(\Omega)),L^2(0,T;H_0^1(\Omega))},
\end{align*}
where we used the weak continuity of $J^{1-\alpha}$ and 
convergences 
\eqref{eq:2209}, \eqref{eq:3102} and
\eqref{eq:homo_pr02}. Thus we may get \eqref{eq:homo_pr11}. 
Since all the terms in \eqref{eq:homo_pr10} 
except the one containing the time derivative 
are products of weakly and strongly convergent sequences, 
their limits are the products of the weakly and strongly convergent limits.
Hence we may pass the limit in \eqref{eq:homo_pr10}
by convergences 
\eqref{eq:2209}, \eqref{eq:2210}, 
\eqref{eq:3102}, 
\eqref{eq:homo_pr05},
\eqref{eq:homo_pr06} and
\eqref{eq:homo_pr11}. 
\begin{align*}
&
\left\<
\pp_t^\alpha(u^0-u_0), (\xi\cdot x)\varphi\psi
\right\>_{L^2(0,T;H^{-1}(\Omega)),L^2(0,T;H_0^1(\Omega))}
\\\nonumber
&
+
\int_0^T
\int_\Omega
\zeta^0(x,t)
\cdot \nabla\varphi(x)
(\xi\cdot x)\psi(t)
\,dxdt
\\\nonumber
&
-
\int_0^T
\int_\Omega
A^0\xi \cdot
\nabla \varphi(x) u^0(x,t)\psi(t)
\,dxdt
\\\nonumber
&=
\left\<
f,(\xi\cdot x)\varphi\psi
\right\>_{L^2(0,T;H^{-1}(\Omega)),L^2(0,T;H_0^1(\Omega))}
.
\end{align*}
This can be rewritten as follows.
\begin{align*}
&
\left\<
\pp_t^\alpha(u^0-u_0), (\xi\cdot x)\varphi\psi
\right\>_{L^2(0,T;H^{-1}(\Omega)),L^2(0,T;H_0^1(\Omega))}
\\\nonumber
&
+
\int_0^T
\int_\Omega
\zeta^0(x,t)
\cdot \nabla\left[(\xi\cdot x)\varphi(x)\right]
\psi(t)
\,dxdt
\\\nonumber
&
-
\int_0^T
\int_\Omega
\zeta^0(x,t)
\cdot \xi \varphi(x)
\psi(t)
\,dxdt
\\\nonumber
&
-
\int_0^T
\int_\Omega
A^0\xi\cdot
\nabla \varphi(x) u^0(x,t)\psi(t)
\,dxdt
\\\nonumber
&=
\left\<
f,(\xi\cdot x)\varphi\psi
\right\>_{L^2(0,T;H^{-1}(\Omega)),L^2(0,T;H_0^1(\Omega))}
.
\end{align*}
This together with 
the equality \eqref{eq:homo_pr08} with $\Phi=(\xi\cdot x)\varphi$ gives us
\begin{equation*}
\int_0^T
\int_\Omega
\zeta^0(x,t)
\cdot\xi\varphi(x)
\psi(t)
\,dxdt
=
-
\int_0^T
\int_\Omega
A^0\xi\cdot
\nabla \varphi(x) u^0(x,t)\psi(t)
\,dxdt
.
\end{equation*}
Noting that $A^0\xi$ is a constant, we have
\begin{equation*}
-
\int_0^T
\int_\Omega
A^0\xi\cdot
\nabla \varphi(x) u^0(x,t)\psi(t)
\,dxdt
=
\int_0^T
\int_\Omega
A^0\xi\cdot \nabla u^0(x,t)
\varphi(x) \psi(t)
\,dxdt
.
\end{equation*}
By the above two equalities, we obtain
\begin{equation*}
\int_0^T
\int_\Omega
\zeta^0(x,t)
\cdot\xi\varphi(x)
\psi(t)
\,dxdt
=
\int_0^T
\int_\Omega
A^0\xi\cdot \nabla u^0(x,t)
\varphi(x) \psi(t)
\,dxdt
.
\end{equation*}
Therefore, we see that
\begin{equation*}
\zeta^0\cdot\xi
=
A^0\xi\cdot \nabla u^0
=
A^0\nabla u^0\cdot\xi.
\end{equation*}
Since $\xi\in\mathbb{R}^N$ is arbitrary, we have \eqref{eq:homo_pr09}. 

At the end of the proof, 
we observe that all convergences hold for the whole sequence $\{u^\vae\}$. 
By \eqref{eq:homo_pr08} and \eqref{eq:homo_pr09}, 
we have the weak formulation of the problem \eqref{eq:3105}. 
We also see that the weak solution $u^0$ of the problem \eqref{eq:3105} is unique by Remark \ref{rmk:3101}. 
Therefore, convergences 
\eqref{eq:homo_pr02}, 
\eqref{eq:homo_pr04}, 
\eqref{eq:homo_pr05} and  
\eqref{eq:homo_pr06} hold for the whole sequence $\{u^\vae\}$, that is, 
$u^\vae$ satisfies \eqref{eq:3103} and \eqref{eq:3104}. 

Thus we complete the proof of Theorem \ref{thm:3101}. \qed

%ssssssssssssssssssssssssssssssssssssssssssssssssss
%
% 	Determination of Constant Diffusion Coefficient
%
%ssssssssssssssssssssssssssssssssssssssssssssssssss
\subsection{Proofs of Theorems on Determination of Constant Diffusion Coefficient}

We show the results on the determinations 
of the constant diffusion coefficient for time-fractional diffusion equations: 
Theorems \ref{thm:32101}, \ref{thm:32102}, \ref{thm:32103}, \ref{thm:32104}, and
\ref{thm:32201}, \ref{thm:32203}, \ref{thm:32204} in \S3.2.

\subsubsection{Proof of Theorem \ref{thm:32101}}
We will prove this theorem in three steps. 

\noindent
\textbf{First Step.}
We show the following inequality: if $p\geq q$ then we have
\begin{equation}
\label{eq:4201}
u_p(x,t)\geq u_q (x,t),\quad
(x,t)\in\Omega\times (0,\infty).
\end{equation}
Set $y:= u_p - u_q$ and $r:= p-q$.  Then 
\begin{equation}
\label{eq:4202}
\left\{
\begin{aligned}
&
\pp_t^\alpha y(x,t)
-
p \Delta y(x,t)
=
r \Delta u_q(x,t),
&(x,t)\in\Omega\times(0,T)
,\\&
y(x,t)=0,
&(x,t)\in\pp\Omega\times(0,T)
,\\&
y(x,0)=0,
&x\in\Omega
.
\end{aligned}
\right.
\end{equation}
Setting $v:= \Delta u_q$, by $u_0 \in H^2(\Omega) \cap H^1_0(\Omega)$,
we may observe
\begin{equation}
\label{eq:4203}
\left\{
\begin{aligned}
&
\pp_t^\alpha v(x,t)
-
q \Delta v(x,t)
=
0,
&(x,t)\in\Omega\times(0,T)
,\\&
v(x,t)=0,
&(x,t)\in\pp\Omega\times(0,T)
,\\&
v(x,0)=\Delta u_0(x),
&x\in\Omega
.
\end{aligned}
\right.
\end{equation}
Indeed, since
\begin{equation*}
u_q(x,t) = \sum_{n=1}^\infty E_{\alpha,1}(-q\la_n t^{\alpha})P_n u_0(x)
,\quad
(x,t)\in\Omega\times (0,\infty),
\end{equation*}
we have
\begin{align*}
- \Delta u_q(x,t) 
&=
\sum_{n=1}^\infty E_{\alpha,1}(-q\la_n t^{\alpha})(\mathcal{A}P_n u_0)(x)
\\&= 
-\sum_{n=1}^\infty E_{\alpha,1}(-q\la_nt^{\alpha})P_n(\Delta u_0)(x)
,\quad
(x,t)\in\Omega\times (0,\infty).
\end{align*}
Thus we may verify \eqref{eq:4203}. 

Applying Theorem 2.1 in Luchko and Yamamoto \cite{LY2017} with 
\eqref{eq:32102} to \eqref{eq:4203}, we see that
\begin{equation}
\label{eq:4204}
\Delta u_q \geq 0\quad\text{\rm in $\Omega\times (0,\infty)$}.
\end{equation}
Again we use Theorem 2.1 in \cite{LY2017} to \eqref{eq:4202}. 
Noting that the right-hand side of the equation in \eqref{eq:4202} is non-negative: $r\Delta u_q \geq 0$ in $\Omega\times (0,\infty)$, 
we obtain that $y\geq 0$ in $\Omega\times (0,\infty)$, 
which means the inequality \eqref{eq:4201}

\noindent
\textbf{Second Step.}
We confirm the analyticity of $u_p(x_0,t_0)$ with respect to $p>0$. 
Set $\mathcal{A}_{p}:= -p\Delta$ with 
$\D(\mathcal{A}_{p}) = H^2(\Omega) \cap H^1_0(\Omega)$.
The eigenvalues are $p\la_n$ and the eigenprojection is the same as 
$\mathcal{A}_1$.
Then we have
\begin{equation}
\label{eq:4205}
u_p(x_0,t_0) = \sum_{n=1}^\infty E_{\alpha,1}(-p\la_n t_0^{\alpha})P_n u_0(x_0)
\end{equation}
and the series is convergent.
Indeed 
\begin{equation*}
\mathcal{A}_{p} u_p(x,t) 
=
\sum_{n=1}^\infty p\la_n E_{\alpha,1}(-p\la_n t^{\alpha}) P_n u_0(x),
\end{equation*}
and so 
\begin{align*}
\| \mathcal{A}_{p} u_p(\cdot,t)\|^2_{L^2(\Omega)}
&\leq
C\sum_{n=1}^\infty \| P_n u_0\|^2_{L^2(\Omega)}\left( \frac{p\la_n t^{\alpha}}
{1+p\la_n t^{\alpha}}\right)^2t^{-2\alpha}
\\&\leq 
Ct^{-2\alpha} \sum_{n=1}^\infty \| P_n u_0\|^2_{L^2(\Omega)} 
\\&= Ct^{-2\alpha}\|u_0\|^2_{L^2(\Omega)}.
\end{align*}
Since $H^2(\Omega) \subset C(\overline{\Omega})$ 
by the Sobolev embedding and 
$N=1,2,3$, we obtain
\begin{equation*}
| u_p(x,t)| 
\leq
Ct^{-\alpha}\| u_0 \|_{L^2(\Omega)}
,\quad 
(x,t)\in\Omega\times (0,\infty). 
\end{equation*}
Thus we obtain the convergent series \eqref{eq:4205}. 
See also Theorem 2.1 in Sakamoto and Yamamoto \cite{SY2011}.

Since $E_{\alpha,1}(-(\la_n t_0^{\alpha})z)$ is holomorphic in 
$z\in \mathbb{C}$, we readily see that 
\begin{equation}
\label{eq:4206}
\mbox{ $u_p(x_0,t_0)$ is analytic in $p>0$.}
\end{equation}
%Since $E_{\alpha,1}(-p\la_n  t^{\alpha})$ is
%holomorphic in $t>0$, 
%We also see that 
%\begin{equation}
%\label{eq:42065}
%\mbox{ $u_p(x,t)$ is analytic in $t>0$}
%\end{equation}
%for arbitrarily fixed $x\in \Omega$.

As a preparation for the third step, 
we show the following inequality. 
\begin{equation}
\label{eq:4207}
u_0(x) \leq 0 \quad x\in \Omega.
\end{equation}
Let 
\begin{equation*}
u_0(\widehat{x}):=\max_{x\in \overline{\Omega}} u_0(x)>0.
\end{equation*}
Since $u_0=0$ on $\pp\Omega$, we see that 
$\widehat{x} \in \Omega$.
By $\Delta u_0 \ge 0$ in $\Omega$, the strong maximum principle 
(e.g., Theorem 3.5 (p.35) in Gilbarg and Trudinger \cite{GT}) 
implies that $u_0$ is a constant function.  By $\bigl. u_0\bigr|_{\pp\Omega} = 0$,
we obtain $u_0=0$ in $\Omega$.  This is a contradiction by the assumption 
\eqref{eq:32102}.  Thus \eqref{eq:4207} is proved.

\noindent
\textbf{Third Step.}
We prove the inequality: if $p> q$ then we have
\begin{equation}
\label{eq:4208}
u_p(x_0,t_0)> u_q(x_0,t_0).
\end{equation}
Assume that 
there exists constants
$p_0$ and $q_0$ satisfying $0<q_0<p_0$ and $u_{p_0}(x_0,t_0)=u_{q_0}(x_0,t_0)$. 
In view of \eqref{eq:4201}, we obtain 
$u_s(x_0,t_0)=u_{q_0}(x_0,t_0)$ for all $s\in [q_0,p_0]$. 
Using the the identity theorem, the analyticity \eqref{eq:4206} yields
\begin{equation*}
u_p(x_0,t_0)=u_q(x_0,t_0)
\quad
\text{\rm for all $p>q>0$}.
\end{equation*}
Hence we have
\begin{equation*}
\sum_{n=1}^\infty E_{\alpha,1}(-p\la_nt_0^{\alpha}) P_n u_0(x_0)
= 
\sum_{n=1}^\infty E_{\alpha,1}(-q\la_nt_0^{\alpha}) P_n u_0(x_0)
\quad
\text{\rm for all $p>q>0$}.
\end{equation*}
By the asymptotic expansion (e.g., \cite{Po}), for large $p>0$ we have
\begin{equation*}
\left|
\sum_{n=1}^\infty E_{\alpha,1}(-p\la_n t_0^{\alpha}) P_n u_0(x_0)
\right|
\le \frac{C}{pt_0^{\alpha}}.
\end{equation*}
Setting $q=1$, we have
\begin{equation*}
u_q(x_0,t_0) = \lim_{p\to \infty} u_p(x_0,t_0) = 0.
\end{equation*}
On the other hand, in view of \eqref{eq:4207}, the strong positivity (e.g., Theorem 9 in 
Luchko and Yamamoto \cite{LY2019}) yields that 
$u_q(x_0,t_0) < 0$.   This is a contradiction.
Thus the proof of \eqref{eq:4208} is complete.

From the inequality \eqref{eq:4208}, 
we may see that the function $h(p):=u_p(x_0,t_0)$ is injective and 
\begin{equation*}
\frac{dh}{dp}(p)>0
\quad \text{\rm for all $p>0$}. 
\end{equation*}
Hence the mean-value theorem yields the stability estimate. 
Thus the proof of Theorem \ref{thm:32101} is complete. \qed

\subsubsection{Proof of Theorem \ref{thm:32104}}

The first and second steps of the proof are 
the same as in the proof of Theorem \ref{thm:32101}. 
Therefore we begin the proof with the third step. 

\noindent
\textbf{Third Step.}
We will show the inequality: if $p> q$ then we have
\begin{equation}
\label{eq:42301}
\int_I\int_\omega
u_p(x,t)\,dxdt
>
\int_I\int_\omega
u_q(x,t)
\,dxdt.
\end{equation}
Assume that 
there exists constants
$p_0$ and $q_0$ satisfying $0<q_0<p_0$ and 
\begin{equation*}
\int_I\int_\omega
u_{p_0}(x,t)\,dxdt
=
\int_I\int_\omega
u_{q_0}(x,t)
\,dxdt.
\end{equation*}
By \eqref{eq:4201}, we have 
\begin{equation*}
u_{p_0}(x,t)\geq u_{q_0} (x,t),\quad
(x,t)\in\Omega\times (0,\infty).
\end{equation*}
Since $u_{p_0}$ and $u_{q_0}$ are smooth enough (see e.g., Theorem 2.1 in \cite{SY2011}), 
we obtain
\begin{equation*}
u_{p_0}(x,t)= u_{q_0} (x,t),\quad
(x,t)\in\omega\times I.
\end{equation*}
%The analyticity \eqref{eq:42065} of $u_{p_0}$ and $u_{q_0}$ 
%in $t>0$ yields
%Therefore we obtain
%\begin{equation*}
%u_{p_0}(x,t)= u_{q_0} (x,t),\quad
%(x,t)\in\omega\times (0,\infty).
%\end{equation*}
Let us fix $(x_0,t_0)\in\omega\times I$ arbitrarily. 
We have 
\begin{equation*}
u_{p_0}(x_0,t_0)= u_{q_0} (x_0,t_0), 
\end{equation*}
which implies 
\eqref{eq:42301} by 
an argument similar to that used 
in the third step of the proof of Theorem \ref{thm:32101}.

From the inequality \eqref{eq:42301}, 
the function
\begin{equation*}
H(p)=
\int_I\int_\omega u_p(x,t)\,dxdt
\end{equation*}
is injective by the inequality \eqref{eq:42301}, and
\begin{equation*}
\frac{dH}{dp}(p)>0,\quad p>0.
\end{equation*}
This yields the stability estimate by the same argument used in the proof of Theorem \ref{thm:32101}. 

Thus the proof of Theorem \ref{thm:32104} is complete. \hfill\qed

\subsubsection{Proofs of Theorems \ref{thm:32102} and \ref{thm:32103}}
Before we begin the proofs of the theorems, 
we confirm a few facts which are required to prove theorems.

The main ingredients are 
\begin{equation*}
\widetilde{u}_p(x,t)
=
\sum_{n=1}^\infty E_{\alpha,1}(-p\lambda_n t^\alpha) P_n u_{0,p}(x)
,\quad
(x,t)\in\Omega\times (0,\infty)
\end{equation*}
and the asymptotic expansion:
\begin{equation*}
E_{\alpha,1}(-p\lambda_n t^\alpha)
=
\sum_{k=1}^K
\frac{(-1)^{k+1}}{\Gamma(1-\alpha k)}
\frac{1}{p^k \lambda_n^k t^{\alpha k}}
+
O\left( \frac{1}{p^{K+1}\la_n^{K+1}t^{\alpha(K+1)}}\right)
\end{equation*}
for any $K\in \mathbb{N}$.

We assume
\begin{equation*}
\widetilde{u}_p(x_0,t)
=
\widetilde{u}_q(x_0,t)
,\quad
0<t<t_1.
\end{equation*}
The analyticity of the solution in $t>0$ yields
\begin{equation*}
\widetilde{u}_p(x_0,t)
=
\widetilde{u}_q(x_0,t)
,\quad
t>0.
\end{equation*}
Therefore
\begin{equation*}
\sum_{n=1}^\infty E_{\alpha,1}(-p\la_n t^{\alpha}) P_n u_{0,p}(x_0) 
= 
\sum_{n=1}^\infty E_{\alpha,1}(-q\la_n t^{\alpha}) P_n u_{0,q}(x_0),\quad t>0.
\end{equation*}
Set
\begin{equation}
\label{eq:42085}
\mathcal{P}_k (x) := \sum_{n=1}^\infty \frac{P_n u_{0,p}(x)}{\la_n^k}, \quad
\mathcal{Q}_k (x):= \sum_{n=1}^\infty \frac{P_n u_{0,q}(x)}{\la_n^k}
\end{equation}
for $x\in \Omega$.
Hence,
\begin{equation}
\label{eq:4209}
\sum_{k=1}^K \frac{(-1)^{k+1}}{\Gamma(1-\alpha k)}
\frac{1}{p^kt^{\alpha k}}\mathcal{P}_k (x_0)
= 
\sum_{k=1}^K \frac{(-1)^{k+1}}{\Gamma(1-\alpha k)}
\frac{1}{q^kt^{\alpha k}}\mathcal{Q}_k (x_0)
+ O\left( \frac{1}{t^{\alpha(K+1)}}\right)
\end{equation}
as $t \to \infty$.

On the other hand, we can show
\begin{lem}
\label{lem:4201}
Let 
$\displaystyle
\sum_{n=1}^\infty \left| \frac{r_n}{\la_n}\right| < \infty
$ and 
let a sequence 
$\{ \ell_k\}_{k\in \mathbb{N}}$ satisfy 
$\displaystyle
\lim_{k\to \infty} \ell_k = \infty
$.
If 
\begin{equation*}
\sum_{n=1}^\infty \frac{r_n}{\la_n^{\ell_k}} = 0 \quad \text{\rm for all $k\in \mathbb{N}$},
\end{equation*}
then $r_n=0$ for $n\in \mathbb{N}$.
\end{lem}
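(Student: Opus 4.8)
The plan is to prove the lemma by induction on the index, exploiting the strict ordering $0<\la_1<\la_2<\cdots$ of the eigenvalues. As a preliminary reduction, since $\ell_k\to\infty$ I may assume $\ell_k\geq 1$ for every $k$ (discarding the finitely many indices with $\ell_k<1$ changes neither the hypotheses that matter for the limit nor the conclusion). For such $k$ the series converges absolutely, because
\[
\left|\frac{r_n}{\la_n^{\ell_k}}\right|
=\left|\frac{r_n}{\la_n}\right|\la_n^{-(\ell_k-1)},
\]
and $\la_n^{-(\ell_k-1)}$ is bounded uniformly in $n$ (using $\la_n\geq\la_1>0$); absolute convergence then follows from the standing hypothesis $\sum_n|r_n/\la_n|<\infty$. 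This absolute convergence is what will let me factor out powers of $\la_m$ term by term below.

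First I would treat the base case. Multiplying the identity $\sum_{n=1}^\infty r_n/\la_n^{\ell_k}=0$ by $\la_1^{\ell_k}$ and isolating the first term gives
\[
r_1=-\sum_{n=2}^\infty r_n\left(\frac{\la_1}{\la_n}\right)^{\ell_k}.
\]
Setting $\rho:=\la_1/\la_2$, which is strictly less than $1$ precisely because $\la_1<\la_2$, I have $\la_1/\la_n\leq\rho$ for every $n\geq 2$, whence
\[
\left|r_n\left(\frac{\la_1}{\la_n}\right)^{\ell_k}\right|
=\la_1\left|\frac{r_n}{\la_n}\right|\left(\frac{\la_1}{\la_n}\right)^{\ell_k-1}
\leq\la_1\,\rho^{\ell_k-1}\left|\frac{r_n}{\la_n}\right|.
\]
Summing over $n\geq 2$ yields $|r_1|\leq\la_1\,\rho^{\ell_k-1}\sum_n|r_n/\la_n|$, and letting $k\to\infty$ (so $\ell_k\to\infty$ and $\rho^{\ell_k-1}\to 0$) forces $r_1=0$.

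The inductive step is structurally identical. Assuming $r_1=\cdots=r_{m-1}=0$, the hypothesis reduces to $\sum_{n\geq m}r_n/\la_n^{\ell_k}=0$; multiplying by $\la_m^{\ell_k}$, isolating $r_m$, and bounding the tail by
\[
|r_m|\leq\la_m\left(\frac{\la_m}{\la_{m+1}}\right)^{\ell_k-1}\sum_{n}\left|\frac{r_n}{\la_n}\right|
\]
gives $r_m=0$ as $k\to\infty$. Since $m$ is arbitrary, $r_n=0$ for all $n$, completing the proof.

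The crux of the argument — and the only place where all three hypotheses are used simultaneously — is this tail estimate. The strict inequality $\la_m<\la_{m+1}$ is essential: it produces a genuine geometric contraction factor $(\la_m/\la_{m+1})^{\ell_k-1}$; the divergence $\ell_k\to\infty$ drives this factor to zero; and the summability $\sum_n|r_n/\la_n|<\infty$ is exactly the ingredient that keeps the remaining infinite sum bounded uniformly in $k$ after the geometric factor is pulled out. I do not anticipate any obstacle beyond bookkeeping the factorization $r_n/\la_n^{\ell_k}=\la_m^{-\ell_k}\cdot(r_n/\la_n)\cdot\la_n\cdot(\la_m/\la_n)^{\ell_k}\la_m^{\ell_k}$ cleanly, which reduces to the elementary identity displayed above.
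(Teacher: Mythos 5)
Your proof is correct, and it is essentially the ``direct'' argument the paper has in mind: the paper itself omits the proof, citing Lemma 1 of Yamamoto \cite{Y2021}, where the same dominant-term extraction is used. Your induction — isolating $r_m$ after multiplying by $\la_m^{\ell_k}$, bounding the tail by the geometric factor $(\la_m/\la_{m+1})^{\ell_k-1}$ times the summable quantity $\sum_n|r_n/\la_n|$, and letting $\ell_k\to\infty$ — is exactly that argument, with the absolute-convergence reduction handled cleanly.
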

The proof is direct and see Lemma 1 in Yamamoto \cite{Y2021}.

Under the assumptions respectively in Theorems \ref{thm:32102} and \ref{thm:32103}, 
we verify
\begin{equation}
\label{eq:4210}
\left\{
\begin{aligned}
&\text{there exists $k_0\in \mathbb{N}$ such that }
\\&
\text{$\mathcal{P}_k(x_0)\ne 0$ for $k\ge k_0$ or $\mathcal{Q}_k(x_0)\ne 0$ for $k\ge k_0$.}
\end{aligned}
\right.
\end{equation}
Indeed, if not, then we can choose a sequence 
$\ell_k$ with 
$k\in \mathbb{N}$ such that 
$\displaystyle
\lim_{k\to\infty} \ell_k = \infty
$ and 
$\mathcal{P}_{\ell_k}(x_0) = \mathcal{Q}_{\ell_k}(x_0) = 0$ for $k\in \mathbb{N}$.  Application of Lemma \ref{lem:4201}
yields $P_n u_{0,p}(x_0) = P_n u_{0,q}(x_0) = 0$ for $n\in \mathbb{N}$, which is impossible 
by the assumptions in both Theorems \ref{thm:32102} and \ref{thm:32103}.

\begin{proof}[Proof of Theorem \ref{thm:32102}]
We set $u_{0,p}\equiv u_{0,q}$. 
By \eqref{eq:4210}, 
we can find $K_1 \in \mathbb{N}$ such that 
$\mathcal{P}_1 (x_0)= \cdots \mathcal{P}_{K_1-1}(x_0) = 0$ and $\mathcal{P}_{K_1}(x_0) \ne 0$.  
For $K_1=1$, we can simply have $\mathcal{P}_1\ne 0$.
Then by \eqref{eq:4209}, we obtain
\begin{equation*}
\frac{(-1)^{K_1+1}}{\Gamma(1-\alpha K_1)}
\frac{\mathcal{P}_{K_1}(x_0)}{p^{K_1}t^{\alpha K_1}}
= 
\frac{(-1)^{K_1+1}}{\Gamma(1-\alpha K_1)}
\frac{\mathcal{P}_{K_1}(x_0)}{q^{K_1}t^{\alpha K_1}}
+ O\left( \frac{1}{t^{\alpha(K_1+1)}}\right)
\end{equation*}
as $t \to \infty$. 
Letting $t\to \infty$, we reach 
\begin{equation*}
\mathcal{P}_{K_1}(x_0) \left( \frac{1}{p^{K_1}} - \frac{1}{q^{K_1}}
\right) = 0,
\end{equation*}
that is, $p=q$ follows by $\mathcal{P}_{K_1}(x_0) \ne 0$.
The proof of Theorem \ref{thm:32102} is complete.
\end{proof}
\begin{proof}[Proof of Theorem \ref{thm:32103}]
By \eqref{eq:4209}, 
equating the coefficients of $t^{-\alpha k}$, 
we have
\begin{equation*}
\frac{\mathcal{P}_k(x_0)}{p^k} = \frac{\mathcal{Q}_k(x_0)}{q^k}, \quad k\in \mathbb{N},
\end{equation*}
that is,
\begin{equation*}
\sum_{n=1}^\infty \frac{P_n u_{0,p}(x_0)}{\la_n^k} = \sum_{n=1}^\infty \frac{P_n u_{0,q}(x_0)}{(\la_n\rho)^k},
\quad k\in \mathbb{N},
\end{equation*}
where we set $\rho := \frac{q}{p}$.  We assume that $\rho=\frac{q}{p} > 1$.
Then
\begin{equation*}
\frac{P_1 u_{0,p}(x_0)}{\la_1^k}
+ 
\sum_{n=2}^{\infty} \frac{P_n u_{0,p}(x_0)}{\la_n^k} 
= 
\sum_{n=1}^\infty \frac{P_n u_{0,q}(x_0)}{(\la_n\rho)^k}.
\end{equation*}
Hence,
\begin{equation*}
P_1 u_{0,p}(x_0)
+ \sum_{n=2}^{\infty} \left( \frac{\la_1}{\la_n}\right)^k P_n u_{0,p}(x_0) 
= \sum_{n=1}^\infty \left( \frac{\la_1}{\la_n\rho}\right)^k P_n u_{0,q}(x_0).
\end{equation*}
Since $\left| \frac{\la_1}{\la_n}\right| < 1$ 
for $n\ge 2$ and
$\left| \frac{\la_1}{\rho\la_n}\right| < 1$ for 
$n\ge 1$, we see
that 
\begin{equation*}
\lim_{k\to \infty} \left( \frac{\la_1}{\la_n}\right)^k = 0 
\quad \text{\rm for $n\ge 2$}
\end{equation*}
and
\begin{equation*}
\lim_{k\to \infty} \left( \frac{\la_1}{\rho\la_n}\right)^k = 0 
\quad \text{\rm for $n\ge 1$}.
\end{equation*}
Therefore we reach $P_1 u_{0,p}(x_0) = 0$. 
This is impossible by the assumption. 
Consequently 
$\frac{q}{p} \le 1$.  
Similarly we can prove 
$
\frac{p}{q} \ge 1
$ 
and $p=q$ follows.
Thus the proof of Theorem \ref{thm:32103} is complete.
\end{proof}

\subsubsection{Proof of Theorem \ref{thm:32201}}

We prove this theorem using arguments similar to those in Theorem \ref{thm:32101}.

\noindent
\textbf{First Step.}
As we have seen in the first step of the proof of Theorem \ref{thm:32101}, 
we may see that 
\begin{equation}
\label{eq:42401}
u_p(x,t)\geq u_q (x,t),\quad
(x,t)\in\Omega\times (0,\infty)
,
\end{equation}
if $p\geq q$. 

Set $y:= u_p - u_q$ and $r:= p-q$.  Then 
\begin{equation}
\label{eq:42402}
\left\{
\begin{aligned}
&
\pp_t^\alpha y(x,t)
-
\dd(B_p\nabla y(x,t))
=
r \pp_1^2 u_q(x,t),
&(x,t)\in\Omega\times(0,T)
,\\&
y(x,t)=0,
&(x,t)\in\pp\Omega\times(0,T)
,\\&
y(x,0)=0,
&x\in\Omega
.
\end{aligned}
\right.
\end{equation}
Setting $v:= \pp_1^2 u_q$, by $u_0 \in H^2(\Omega) \cap H^1_0(\Omega)$,
we may see that
\begin{equation}
\label{eq:42403}
\left\{
\begin{aligned}
&
\pp_t^\alpha v(x,t)
-
\dd(B_q \nabla v(x,t))
=
0,
&(x,t)\in\Omega\times(0,T)
,\\&
v(x,t)=0,
&(x,t)\in\pp\Omega\times(0,T)
,\\&
v(x,0)=\pp_1^2 u_0(x),
&x\in\Omega
.
\end{aligned}
\right.
\end{equation}
%Indeed, since
%\begin{equation*}
%u_q(x,t) = 
%\sum_{n=1}^\infty\sum_{j=1}^{d_n} 
%E_{\alpha,1}((-q\Lambda_{n_j}-\widetilde\Lambda_{n_j}) t^{\alpha})
%(u_0,\varphi_{n_j})_{L^2(\Omega)}\varphi_{n_j}(x)
%,\quad
%(x,t)\in\Omega\times (0,\infty),
%\end{equation*}
%we have
%\begin{align*}
%&
%- \pp_1^2 u_q(x,t) 
%\\&=
%-
%\sum_{n=1}^\infty\sum_{j=1}^{d_n} 
%E_{\alpha,1}((-q\Lambda_{n_j}-\widetilde\Lambda_{n_j}) t^{\alpha})
%(u_0,\varphi_{n_j})_{L^2(\Omega)}\pp_1^2 \varphi_{n_j}(x)
%\\&= 
%-
%\sum_{n=1}^\infty\sum_{j=1}^{d_n} 
%E_{\alpha,1}((-q\Lambda_{n_j}-\widetilde\Lambda_{n_j}) t^{\alpha})
%(u_0,\pp_1^2\varphi_{n_j})_{L^2(\Omega)} \varphi_{n_j}(x)
%\\&= 
%\sum_{n=1}^\infty\sum_{j=1}^{d_n} 
%E_{\alpha,1}((-q\Lambda_{n_j}-\widetilde\Lambda_{n_j}) t^{\alpha})
%(-\pp_1^2u_0,\varphi_{n_j})_{L^2(\Omega)} \varphi_{n_j}(x)
%,\quad
%(x,t)\in\Omega\times (0,\infty).
%\end{align*}
%Thus we may verify \eqref{eq:42403}. 

Let us apply Theorem 2.1 in \cite{LY2017} with 
\eqref{eq:32205} to \eqref{eq:42403}, we see 
\begin{equation}
\label{eq:42404}
\pp_1^2 u_q \geq 0\quad\text{\rm in $\Omega\times (0,\infty)$}.
\end{equation}
Using Theorem 2.1 in \cite{LY2017} to \eqref{eq:42402}. 
By $r\pp_1^2 u_q \geq 0$ in $\Omega\times (0,\infty)$, 
we see that $y\geq 0$ in $\Omega\times (0,\infty)$, 
which implies the inequality \eqref{eq:42401}

\noindent
\textbf{Second Step.}
We observe the analyticity of $u_p(x_0,t_0)$ with respect to $p>0$. 
We have
\begin{equation}
\label{eq:42405}
u_p(x_0,t_0) 
=
\sum_{n=1}^\infty\sum_{j=1}^{d_n} 
E_{\alpha,1}((-p\Lambda_{n_j}-\widetilde\Lambda_{n_j}) t_0^{\alpha})
(u_0,\varphi_{n_j})_{L^2(\Omega)}\varphi_{n_j}(x_0)
\end{equation}
and the series is convergent.
Indeed 
\begin{equation*}
-\dd(B_p\nabla u_p(x,t)) 
=
\sum_{n=1}^\infty\sum_{j=1}^{d_n} 
(p\Lambda_{n_j}+\widetilde\Lambda_{n_j})
E_{\alpha,1}((-p\Lambda_{n_j}-\widetilde\Lambda_{n_j}) t^{\alpha})
(u_0,\varphi_{n_j})_{L^2(\Omega)}\varphi_{n_j}(x)
,
\end{equation*}
and so 
\begin{align*}
\left\| -\dd(B_p\nabla u_p(\cdot,t)) 
\right\|^2_{L^2(\Omega)}
&\leq
C
\sum_{n=1}^\infty\sum_{j=1}^{d_n} 
(u_0,\varphi_{n_j})_{L^2(\Omega)}^2
\left( 
\frac{(p\Lambda_{n_j}+\widetilde\Lambda_{n_j})t^\alpha}
{1+(p\Lambda_{n_j}+\widetilde\Lambda_{n_j})t^\alpha}\right)^2t^{-2\alpha}
\\&\leq 
Ct^{-2\alpha} \sum_{n=1}^\infty \| P_n u_0\|^2_{L^2(\Omega)} 
\\&= Ct^{-2\alpha}\|u_0\|^2_{L^2(\Omega)}.
\end{align*}
Since $H^2(\Omega) \subset C(\overline{\Omega})$ 
by the Sobolev embedding and 
$N=1,2,3$, we obtain
\begin{equation*}
| u_p(x,t)| 
\leq
Ct^{-\alpha}\| u_0 \|_{L^2(\Omega)}
,\quad 
(x,t)\in\Omega\times (0,\infty). 
\end{equation*}
Thus we obtain the convergent series \eqref{eq:42405}. 
%See also Theorem 2.1 in Sakamoto and Yamamoto \cite{SY2011}.

Since $E_{\alpha,1}((-z\Lambda_{n_j}-\widetilde\Lambda_{n_j})t_0^\alpha)$ is holomorphic in 
$z\in \mathbb{C}$, we readily see that 
\begin{equation}
\label{eq:42406}
\mbox{ $u_p(x_0,t_0)$ is analytic in $p>0$.}
\end{equation}
%Since $E_{\alpha,1}((-p\Lambda_{n_j}-\widetilde\Lambda_{n_j})t_0^\alpha)$ is holomorphic in 
%$t>0$, we also see that 
%\begin{equation}
%\label{eq:424065}
%\mbox{ $u_p(x,t)$ is analytic in $t>0$}
%\end{equation}
%for arbitrarily fixed $x\in \Omega$.

As we have shown in the proof of Theorem \ref{thm:32201}, 
we have the following inequality. 
\begin{equation}
\label{eq:42407}
u_0(x) \leq 0 \quad x\in \Omega.
\end{equation}
%Let 
%\begin{equation*}
%u_0(\widehat{x}):=\max_{x\in \overline{\Omega}} u_0(x)>0.
%\end{equation*}
%Since $u_0=0$ on $\pp\Omega$, we see that $\widehat{x} \in \Omega$.
%By $\Delta u_0 \ge 0$ in $\Omega$, the strong maximum principle 
%(e.g., Theorem 3.5 (p.35) in Gilbarg and Trudinger \cite{GT}) 
%implies that $u_0$ is a constant function.  By $\bigl. u_0\bigr|_{\pp\Omega} = 0$,
%we obtain $u_0=0$ in $\Omega$.  This is a contradiction by the assumption 
%\eqref{eq:32202}.  Thus \eqref{eq:42407} is proved.

\noindent
\textbf{Third Step.}
We prove the inequality: if $p> q$ then we have
\begin{equation}
\label{eq:42408}
u_p(x_0,t_0)> u_q(x_0,t_0).
\end{equation}
Assume that 
there exists constants
$p_0$ and $q_0$ satisfying $0<q_0<p_0$ and $u_{p_0}(x_0,t_0)=u_{q_0}(x_0,t_0)$. 
%In view of \eqref{eq:42401}, we obtain 
%$u_s(x_0,t_0)=u_q(x_0,t_0)$ for all $s\in [q_0,p_0]$. 
%Using the the identity theorem, the analyticity \eqref{eq:42406} yields
By the argument used in the proof of Theorem \ref{thm:32201},  we see that
\begin{equation*}
u_p(x_0,t_0)=u_q(x_0,t_0)
\quad
\text{\rm for all $p>q>0$}.
\end{equation*}
Hence we have
\begin{align*}
&
\sum_{n=1}^\infty\sum_{j=1}^{d_n} 
E_{\alpha,1}((-p\Lambda_{n_j}-\widetilde\Lambda_{n_j}) t_0^{\alpha})
(u_0,\varphi_{n_j})_{L^2(\Omega)}\varphi_{n_j}(x_0)
\\&= 
\sum_{n=1}^\infty\sum_{j=1}^{d_n} 
E_{\alpha,1}((-q\Lambda_{n_j}-\widetilde\Lambda_{n_j}) t_0^{\alpha})
(u_0,\varphi_{n_j})_{L^2(\Omega)}\varphi_{n_j}(x_0)
\quad
\text{\rm for all $p>q>0$}.
\end{align*}
By the asymptotic expansion (e.g., \cite{Po}), for large $p>0$ we have
%\note{check!!\\Is this true?}
\begin{equation*}
\left|
\sum_{n=1}^\infty\sum_{j=1}^{d_n} 
E_{\alpha,1}((-p\Lambda_{n_j}-\widetilde\Lambda_{n_j}) t_0^{\alpha})
(u_0,\varphi_{n_j})_{L^2(\Omega)}\varphi_{n_j}(x_0)
\right|
\le \frac{C}{(p\Lambda_{1}+\widetilde\Lambda_{1})t_0^{\alpha}}.
\end{equation*}
Here $p\Lambda_1+\widetilde\Lambda_1$ is the first eigenvalue for $p>1$ and this is simple. Indeed, we have
\begin{align}
\label{eq:42409}
p\Lambda_1+\widetilde\Lambda_1
&=
(p-1)\Lambda_1 + (\Lambda_1+\widetilde\Lambda_1)
\\\nonumber&<
(p-1)\Lambda_{n_j} + (\Lambda_{n_j}+\widetilde\Lambda_{n_j})
\\\nonumber&=
p\Lambda_{n_j}+\widetilde\Lambda_{n_j}, 
\end{align}
for $n \geq 2$, 
since we know 
\begin{equation*}
\Lambda_1 \leq \Lambda_{n_j}
\quad
\text{and}
\quad
\Lambda_1+\widetilde\Lambda_1
=\la_1
<\la_n
=
\Lambda_{n_j}+\widetilde\Lambda_{n_j}.
\end{equation*}
Setting $q=1$, we have
\begin{equation*}
u_q(x_0,t_0) = \lim_{p\to \infty} u_p(x_0.t_0) = 0.
\end{equation*}
On the other hand, we may obtain
$u_q(x_0,t_0) < 0$ by the same manner used in Third step in the proof of Theorem \ref{thm:32201}.   This is a contradiction.
%\note{check}
Thus the proof of \eqref{eq:42408} is complete.

From the inequality \eqref{eq:42408}, 
we may see that the function $h(p):=u_p(x_0,t_0)$ is injective and 
\begin{equation*}
\frac{dh}{dp}(p)>0
\quad \text{\rm for all $p>0$}. 
\end{equation*}
Hence the mean-value theorem yields the stability estimate. 
Thus the proof of Theorem \ref{thm:32201} is complete. \qed

\subsubsection{Proof of Theorem \ref{thm:32204}}

The first and second steps of the proof are 
the same as the proof of Theorem \ref{thm:32201}. 
Moreover, we may obtain the stability estimate by 
the argument similar to that used in the third step of the proof of Theorem \ref{thm:32104}.

Thus the proof of Theorem \ref{thm:32204} is complete. \hfill\qed

\subsubsection{Proof of Theorem \ref{thm:32203}}
We use
\begin{equation*}
\widetilde{u}_p(x,t)
=
\sum_{n=1}^\infty 
\sum_{j=1}^{d_n}
E_{\alpha,1}((-p\Lambda_{n_j}-\widetilde\Lambda_{n_j} )t^\alpha) (u_{0,p},\varphi_{n,j})_{L^2(\Omega)}\varphi_{n_j}(x)
,\quad
(x,t)\in\Omega\times (0,\infty)
\end{equation*}
and the asymptotic expansion:
\begin{align*}
E_{\alpha,1}((-p\Lambda_{n_j}-\widetilde\Lambda_{n_j} )t^\alpha)
&=
\sum_{k=1}^K
\frac{(-1)^{k+1}}{\Gamma(1-\alpha k)}
\frac{1}{(p\Lambda_{n_j}+\widetilde\Lambda_{n_j})^k t^{\alpha k}}
\\&\quad+
O\left( \frac{1}{(p\Lambda_{n_j}+\widetilde\Lambda_{n_j})^{K+1}t^{\alpha(K+1)}}\right)
\end{align*}
for any $K\in \mathbb{N}$.

We assume
\begin{equation*}
\widetilde{u}_p(x_0,t)
=
\widetilde{u}_q(x_0,t)
,\quad
0<t<t_1.
\end{equation*}
By the analyticity of the solution in $t>0$, we have
\begin{equation*}
\widetilde{u}_p(x_0,t)
=
\widetilde{u}_q(x_0,t)
,\quad
t>0.
\end{equation*}
Therefore
\begin{align*}
&\sum_{n=1}^\infty 
\sum_{j=1}^{d_n}
E_{\alpha,1}((-p\Lambda_{n_j}-\widetilde\Lambda_{n_j} )t^\alpha) (u_{0,p},\varphi_{n,j})_{L^2(\Omega)}\varphi_{n_j}(x_0)
\\&= 
\sum_{n=1}^\infty 
\sum_{j=1}^{d_n}
E_{\alpha,1}((-q\Lambda_{n_j}-\widetilde\Lambda_{n_j} )t^\alpha) (u_{0,q},\varphi_{n,j})_{L^2(\Omega)}\varphi_{n_j}(x_0)
,\quad t>0.
\end{align*}
%Set
%\begin{equation*}
%\widetilde{A}_k := 
%\sum_{n=1}^\infty 
%\sum_{j=1}^{d_n}
%\frac{(u_{0,p},\varphi_{n,j})_{L^2(\Omega)}\varphi_{n_j}(x_0)}{(p\Lambda_{n_j}+\widetilde\Lambda_{n_j})^k}, \quad
%\widetilde{B}_k := 
%\sum_{n=1}^\infty 
%\sum_{j=1}^{d_n}
%\frac{(u_{0,q},\varphi_{n,j})_{L^2(\Omega)}\varphi_{n_j}(x_0)}{(q\Lambda_{n_j}+\widetilde\Lambda_{n_j})^k}
%.
%\end{equation*}
Hence,
\begin{align}
&\label{eq:42410}
\sum_{k=1}^K \frac{(-1)^{k+1}}{\Gamma(1-\alpha k)}
\frac{1}{t^{\alpha k}}%\widetilde{A}_k
\sum_{n=1}^\infty 
\sum_{j=1}^{d_n}
\frac{(u_{0,p},\varphi_{n,j})_{L^2(\Omega)}\varphi_{n_j}(x_0)}{(p\Lambda_{n_j}+\widetilde\Lambda_{n_j})^k} 
\\\nonumber&= 
\sum_{k=1}^K \frac{(-1)^{k+1}}{\Gamma(1-\alpha k)}
\frac{1}{t^{\alpha k}}%\widetilde{B}_k 
\sum_{n=1}^\infty 
\sum_{j=1}^{d_n}
\frac{(u_{0,q},\varphi_{n,j})_{L^2(\Omega)}\varphi_{n_j}(x_0)}{(q\Lambda_{n_j}+\widetilde\Lambda_{n_j})^k}
+ O\left( \frac{1}{t^{\alpha(K+1)}}\right)
\end{align}
as $t \to \infty$.

By \eqref{eq:42410}, 
equating the coefficients of $t^{-\alpha k}$, 
we have
%\begin{equation*}
%\widetilde{A}_k=\widetilde{B}_k, \quad k\in \mathbb{N},
%\end{equation*}
%that is,
\begin{equation*}
\sum_{n=1}^\infty 
\sum_{j=1}^{d_n}
\frac{(u_{0,p},\varphi_{n,j})_{L^2(\Omega)}\varphi_{n_j}(x_0)}{(p\Lambda_{n_j}+\widetilde\Lambda_{n_j})^k}
= 
\sum_{n=1}^\infty 
\sum_{j=1}^{d_n}
\frac{(u_{0,q},\varphi_{n,j})_{L^2(\Omega)}\varphi_{n_j}(x_0)}{(q\Lambda_{n_j}+\widetilde\Lambda_{n_j})^k}
,
\quad k\in \mathbb{N}.
\end{equation*}
Assume that $\frac{q}{p}>1$. 
Then we have 
\begin{equation*}
\frac{P_1 u_{0,p}(x_0)}{(p\Lambda_1+\widetilde\Lambda_1)^k}
+ 
\sum_{n=2}^{\infty} 
\sum_{j=1}^{d_n}
\frac{(u_{0,p},\varphi_{n,j})_{L^2(\Omega)}\varphi_{n_j}(x_0)}{(p\Lambda_{n_j}+\widetilde\Lambda_{n_j})^k}
= 
\sum_{n=1}^\infty 
\sum_{j=1}^{d_n}
\frac{(u_{0,q},\varphi_{n,j})_{L^2(\Omega)}\varphi_{n_j}(x_0)}{(q\Lambda_{n_j}+\widetilde\Lambda_{n_j})^k}
.
\end{equation*}
Here $p\Lambda_1+\widetilde\Lambda_1$ is the first eigenvalue and this is simple.
Hence,
\begin{align*}
&P_1 u_{0,p}(x_0)
+ \sum_{n=2}^{\infty} 
\sum_{j=1}^{d_n}
\left( \frac{p\Lambda_{1}+\widetilde\Lambda_{1}}{p\Lambda_{n_j}+\widetilde\Lambda_{n_j}}\right)^k 
(u_{0,p},\varphi_{n,j})_{L^2(\Omega)}\varphi_{n_j}(x_0)
\\&
= \sum_{n=1}^\infty \left( \frac{p\Lambda_{1}+\widetilde\Lambda_{1}}{q\Lambda_{n_j}+\widetilde\Lambda_{n_j}}\right)^k 
(u_{0,q},\varphi_{n,j})_{L^2(\Omega)}\varphi_{n_j}(x_0).
\end{align*}
Since 
$\left| 
\frac{p\Lambda_{1}+\widetilde\Lambda_{1}}{p\Lambda_{n_j}+\widetilde\Lambda_{n_j}}
\right| < 1$ 
for $n\ge 2$ and
$\left| 
\frac{p\Lambda_{1}+\widetilde\Lambda_{1}}{q\Lambda_{n_j}+\widetilde\Lambda_{n_j}}
\right| < 1$ for 
$n\ge 1$
by the inequality \eqref{eq:42409}
, we see
that 
\begin{equation*}
\lim_{k\to \infty} \left( \frac{p\Lambda_{1}+\widetilde\Lambda_{1}}{p\Lambda_{n_j}+\widetilde\Lambda_{n_j}}\right)^k = 0 
\quad \text{\rm for $n\ge 2$}
\end{equation*}
and
\begin{equation*}
\lim_{k\to \infty} \left( \frac{p\Lambda_{1}+\widetilde\Lambda_{1}}{q\Lambda_{n_j}+\widetilde\Lambda_{n_j}}\right)^k = 0 
\quad \text{\rm for $n\ge 1$}.
\end{equation*}
Therefore we reach $P_1 u_{0,p}(x_0) = 0$. 
This is impossible by the assumption. 
Consequently $\frac{q}{p} \le 1$.  
We may also show $\frac{p}{q} \ge 1$ 
and $p=q$ follows.
Thus the proof of Theorem \ref{thm:32203} is complete.
\hfill\qed

%SSSSSSSSSSSSSSSSSSSSSSSSSSSSSSSSSSSSSSSSSSSSSSSSSS
%
%	Appendix
%
%SSSSSSSSSSSSSSSSSSSSSSSSSSSSSSSSSSSSSSSSSSSSSSSSSS
\section*{Appendix}
We will prove here some lemmas 
which we have not proved yet.

%ssssssssssssssssssssssssssssssssssssssssssssssssss
%
%	Proof of Lemma 3.1 and 3.2
%
%ssssssssssssssssssssssssssssssssssssssssssssssssss
\subsection{Proof of Lemmas \ref{lem:3101} and \ref{lem:3103}}
We give proofs of the two lemmas 
stated in the main result \S3.1.2 and \S3.1.2.
We will start with the proof of Lemma \ref{lem:3101}. 
\begin{proof}[Proof of Lemma \ref{lem:3101}]
Since $w_\xi=\xi w_1$ 
by the linearity of $w_\xi$ 
with respect to $\xi$, 
it is sufficient to show
\begin{equation*}
\M_Y(a\pp_y w_1)
=
\frac{1}{\M_Y\left(\frac{1}{a}\right)}.
\end{equation*}
Note that $w_1=y-\chi_1$, where 
$\chi_1$ is the weak solution of the problem:
\begin{equation*}
\left\{
\begin{aligned}
&
-\pp_y (a(y)\pp_y \chi_1)=-\pp_y a(y)\quad \text{in $\D^\prime(\mathbb{R})$},
\\&
\text{%
$\chi_1$ is $Y$-periodic and 
$\M_Y(\chi_1)=0$%
}.
\end{aligned}
\right.
\end{equation*}
With straightforward calculations, 
we may obtain $\chi_1$ of the following form:
\begin{equation*}
\chi_1=
y+C_1\int_0^y \frac{1}{a(z)}\,dz
+C_0
\end{equation*}
with constants $C_0$ and $C_1$. 
Since $\chi_1(0)=\chi_1(\ell_1)$ 
by the $Y$-periodicity of $\chi_1$, 
we have
\begin{equation*}
C_0
=
\ell_1
+
C_1\int_0^{\ell_1}
\frac{1}{a(z)}\,dz
+
C_0
.
\end{equation*}
Hence we see that 
\begin{equation*}
C_1
=
-\frac{\ell_1}{\int_0^{\ell_1}
\frac{1}{a(z)}\,dz}
=
-\frac{1}{\M_Y\left(\frac1{a}\right)}
.
\end{equation*}
Thus we obtain
\begin{equation*}
\chi_1=
y-\frac{1}{\M_Y\left(\frac1{a}\right)}
\int_0^y \frac{1}{a(z)}\,dz
+C_0,
\end{equation*}
where $C_0$ is a constant which satisfies 
$\M_Y(\chi_1)=0$.

By $w_1=y-\chi_1$, we have
\begin{equation*}
w_1=
\frac{1}{\M_Y\left(\frac1{a}\right)}
\int_0^y \frac{1}{a(z)}\,dz
-C_0,
\end{equation*}
and then 
\begin{equation*}
\pp_y w_1
=
\frac{1}{\M_Y\left(\frac1{a}\right)}
\cdot 
\frac{1}{a(y)}.
\end{equation*}
Using this equality, we may get
\begin{align*}
\M_Y(a\pp_y w_1)
&=
\M_Y\left(\frac{1}{\M_Y\left(\frac1{a}\right)}
\right)
\\&=
\frac{1}{\M_Y\left(\frac{1}{a}\right)}.
\end{align*}
Thus we conclude this lemma. 
\end{proof}
\begin{rmk}
In this case, 
we may obtain the homogenization result and this lemma directly without using the oscillating test function. 
\end{rmk}

Let us prove Lemma \ref{lem:3102}. 
\begin{proof}[Proof of Lemma \ref{lem:3102}]
Let $\chi_i$ be the unique solution of 
the problem \eqref{eq:2205} 
with $\xi=e_i$ $(i=1,\ldots,N)$:
\begin{equation}
\label{eq:a01}
\left\{
\begin{aligned}
&
-\dd_y (A(y)\nabla_y \chi_i)=-\dd_y(A(y)e_i)\quad \text{in $\D^\prime(\mathbb{R}^N)$},
\\&
\text{%
$\chi_i$ is $Y$-periodic and 
$\M_Y(\chi_i)=0$%
},
\end{aligned}
\right.
\end{equation}
and we set $w_i=y_i-\chi_i$ $(i=1,\ldots, N)$. 
Then $w_i$ is the unique solution of 
the problem \eqref{eq:2208} 
with $\xi=e_i$ $(i=1,\ldots,N)$. 

If $i=1$, then 
the coefficient and the non-homogeneous term in the above equation of the problem \eqref{eq:a01} 
depend on $y_1$ and are independent of $y_2,\ldots, y_N$. 
Hence we may find $\chi_1$ depending on only $y_1$, 
that is, the solution $\chi_1=\chi_1(y_1)$ of 
the following problem:
\begin{equation*}
\left\{
\begin{aligned}
&
-\frac{\pp}{\pp y_1} \left(a_{11}(y_1)\frac{\pp\chi_1}{\pp y_1}\right)=-\frac{\pp a_{11}}{\pp y_1}(y_1)\quad \text{in $\D^\prime(\mathbb{R})$},
\\&
\text{%
$\chi_1$ is $(0,\ell_1)$-periodic and 
$\M_{(0,\ell_1)}(\chi_1)=0$
}.
\end{aligned}
\right.
\end{equation*}
By the same argument in the proof of Lemma \ref{lem:3101}, we have
\begin{equation*}
\chi_1
=
y_1-
\frac{1}{\M_{(0,\ell_1)}\left(\frac1{a_{11}}\right)}
\int_0^{y_1}
\frac1{a_{11}(z)}
\,dz
+
C_{0,1}, 
\end{equation*}
where $C_{0,1}$ is a constant satisfying 
$\M_{(0,\ell_1)}(\chi_1)=0$. 

By $w_1=y_1-\chi_1$, we obtain
\begin{equation*}
w_1
=
\frac{1}{\M_{(0,\ell_1)}\left(\frac1{a_{11}}\right)}
\int_0^{y_1}
\frac1{a_{11}(z)}
\,dz
-
C_{0,1},
\end{equation*}
and then
\begin{equation*}
\nabla_y w_1
=
\begin{pmatrix}
\frac{1}{\M_{(0,\ell_1)}\left(\frac1{a_{11}}\right)}\cdot \frac1{a_{11}}
\\
0\\
\vdots\\
0
\end{pmatrix}
.
\end{equation*}

By \eqref{eq:3106} with $\xi=e_1$, we may see that
\begin{equation*}
\begin{pmatrix}
a_{11}^0\\
a_{21}^0\\
\vdots\\
a_{N1}^0
\end{pmatrix}
=
\begin{pmatrix}
\frac{1}{\M_{(0,\ell_1)}\left(\frac1{a_{11}}\right)}\\
\frac{1}{\M_{(0,\ell_1)}\left(\frac1{a_{11}}\right)}\M_{(0,\ell_1)}\left(\frac{a_{21}}{a_{11}}\right)\\
\vdots\\
\frac{1}{\M_{(0,\ell_1)}\left(\frac1{a_{11}}\right)}\M_{(0,\ell_1)}\left(\frac{a_{N1}}{a_{11}}\right)
\end{pmatrix}.
\end{equation*}
By the component of the above equality, we have
\begin{equation*}
a_{11}^0=a_{11}^\ast,\quad
a_{i1}^0=a_{i1}^\ast\quad i=2,\ldots,N.
\end{equation*}

Next we compute the solution $\chi_j$ of the problem \eqref{eq:a01} for $j=2,\ldots, N$. 
Let us calculate 
$\chi_j$ depending on only $y_1$, that is, 
the solution $\chi_j=\chi_j(y_1)$ of the 
following problem for $j=2,\ldots, N$:
\begin{equation*}
\left\{
\begin{aligned}
&
-\frac{\pp}{\pp y_1} \left(a_{11}(y_1)\frac{\pp\chi_j}{\pp y_1}\right)=-\frac{\pp a_{1j}}{\pp y_1}(y_1)\quad \text{in $\D^\prime(\mathbb{R})$},
\\&
\text{%
$\chi_j$ is $(0,\ell_1)$-periodic and 
$\M_{(0,\ell_1)}(\chi_j)=0$
}.
\end{aligned}
\right.
\end{equation*}
Using the same argument as used above, we have
\begin{equation*}
\chi_j
=
\int_0^{y_1}
\frac{a_{1j}(z)}{a_{11}(z)}
\,dz
-
\frac{\M_{(0,\ell_1)}\left(\frac{a_{1j}}{a_{11}}\right)}{\M_{(0,\ell_1)}\left(\frac{1}{a_{11}}\right)}
\int_0^{y_1}
\frac1{a_{11}(z)}
\,dz
+C_{0,j},
\end{equation*}
where $C_{0,j}$ are constants 
satisfying $\M_{(0,\ell_1)}(\chi_j)=0$ 
for $j=2,\ldots,N$.

By $w_j=y_j-\chi_j$, we obtain
\begin{equation*}
w_j
=
y_j
-
\int_0^{y_1}
\frac{a_{1j}(z)}{a_{11}(z)}
\,dz
+
\frac{\M_{(0,\ell_1)}\left(\frac{a_{1j}}{a_{11}}\right)}{\M_{(0,\ell_1)}\left(\frac{1}{a_{11}}\right)}
\int_0^{y_1}
\frac1{a_{11}(z)}
\,dz
-
C_{0,j},
\end{equation*}
and then
\begin{equation*}
\nabla_y w_j
=
\begin{blockarray}{(c)c}
-\frac{a_{1j}}{a_{11}}
+
\frac{1}{\M_{(0,\ell_1)}\left(\frac{1}{a_{11}}\right)}
\M_{(0,\ell_1)}\left(\frac{a_{1j}}{a_{11}}\right)
\cdot \frac{1}{a_{11}}
&\\
0&\\
\vdots&\\
1&\text{($j$-th component)}\\
\vdots&\\
0&
\end{blockarray}
.
\end{equation*}

By \eqref{eq:3106} with $\xi=e_j$, we may see that
\begin{equation*}
\begin{pmatrix}
a_{1j}^0\\
a_{2j}^0\\
\vdots\\
a_{Nj}^0
\end{pmatrix}
=
\begin{pmatrix}
\frac{1}{\M_{(0,\ell_1)}\left(\frac{1}{a_{11}}\right)}
\M_{(0,\ell_1)}\left(\frac{a_{1j}}{a_{11}}\right)
\\
\frac{1}{\M_{(0,\ell_1)}\left(\frac{1}{a_{11}}\right)}
\M_{(0,\ell_1)}\left(\frac{a_{1j}}{a_{11}}\right)
\M_{(0,\ell_1)}\left(\frac{a_{21}}{a_{11}}\right)
+
\M_{(0,\ell_1)}\left(a_{2j}-\frac{a_{1j}a_{21}}{a_{11}}\right)
\\
\vdots\\
\frac{1}{\M_{(0,\ell_1)}\left(\frac{1}{a_{11}}\right)}
\M_{(0,\ell_1)}\left(\frac{a_{1j}}{a_{11}}\right)
\M_{(0,\ell_1)}\left(\frac{a_{N1}}{a_{11}}\right)
+
\M_{(0,\ell_1)}\left(a_{Nj}-\frac{a_{1j}a_{N1}}{a_{11}}\right)
\end{pmatrix}
\end{equation*}
for $j=2,\ldots,N$. 

By the component of the above equality, we have
\begin{equation*}
a_{1j}^0=a_{1j}^\ast,\quad
a_{ij}^0=a_{ij}^\ast\quad i,j=2,\ldots,N.
\end{equation*}

Thus we complete the proof of Lemma \ref{lem:3102}.
\end{proof}

%SSSSSSSSSSSSSSSSSSSSSSSSSSSSSSSSSSSSSSSSSSSSSSSSSS
%
%	Acknowledgments
%
%SSSSSSSSSSSSSSSSSSSSSSSSSSSSSSSSSSSSSSSSSSSSSSSSSS
\section*{Acknowledgments}
Atsushi Kawamoto has been supported 
by Grant-in-Aid for Research Activity Start-up, 
Japan Society for the Promotion of Science (JSPS) KAKENHI Grant Number JP21K20333.

%The author would like to thank the anonymous referees
%and board members for their careful reading, invaluable comments.
%

%RRRRRRRRRRRRRRRRRRRRRRRRRRRRRRRRRRRRRRRRRRRRRRRRRR
%
%	REFERENCES
%
%RRRRRRRRRRRRRRRRRRRRRRRRRRRRRRRRRRRRRRRRRRRRRRRRRR


\begin{thebibliography}{99}

\bibitem{A1975}
R. A. Adams, 
Sobolev Spaces,
Academic Press, New York, 1975

\bibitem{A1992}
G. Allaire, 
Homogenization and two-scale convergence.
 SIAM J. Math. Anal. \textbf{23}
(1992) 1482--1518

%\bibitem{Bendsoe-Kikuchi88}
%M. P. Bends{\o}e and N. Kikuchi,
%Generating optimal topologies in structural design using a homogenization method.
%Comp. Meth, Appl. Mech. Eng. \textbf{71} (1988) 197--224

\bibitem{CD1999}
D. Cioranescu and P. Donato, 
An Introduction to Homogenization, 
Lecture Series in Mathematics and Its Applications, 17,  
The Clarendon Press, Oxford University Press, New York, 1999.


%\bibitem{Caputo1967}
%M. Caputo, 
%Linear model of dissipation whose Q is almost frequency independent-II,
%{\it Goody’s. J. R. Astr. Soc.}, \textbf{13} 529–-539
%

%\bibitem{Evans}
%L. C. Evans, 
%Partial Differential Equations 
%(Graduate Studies in Mathematics 19),  
%American Math Society, , Providence, R.I., 2010

\bibitem{GLY2015}
R. Gorenflo, Y. Luchko, M. Yamamoto, 
Time-fractional diffusion equation in the fractional
Sobolev spaces, 
Fract. Calc. Appl. Anal. \textbf{18}, 799--820 (2015)

\bibitem{GT}
D. Gilbarg and N.S. Trudinger, 
Elliptic Partial Differential Equations of Second Order, 
Springer-Verlag, Berlin, 2001.

\bibitem{I2006}
V. Isakov,
Inverse problems for partial differential equations. 3rd ed. 
Applied Mathematical Sciences 127. 
Springer, Cham, 2017.

\bibitem{JL2020}
H. Jiuhua and G. Li,
Homogenization of time-fractional diffusion equations with periodic coefficients,
Journal of Computational Physics \textbf{408} (2020) 109231.

\bibitem{KRY2020}
A. Kubica, K. Ryszewska, M. Yamamoto,
Time-fractional differential equations. A theoretical introduction, 
SpringerBriefs in Mathematics. Singapore: Springe, 
2020 

\bibitem{KY2018}
A. Kubica, M. Yamamoto, Initial-boundary value problems for fractional diffusion equations with time-dependent coefficients. 
Fract. Calc. Appl. Anal. \textbf{21}, 276–-311 (2018)

\bibitem{LY2017}
Y. Luchko and M. Yamamoto, 
On the maximum principle for a time-fractional 
diffusion equation, 
Fract. Calc. Appl. Anal. \textbf{20} (2017) 1131--1145.


\bibitem{LY2019}
Y. Luchko and M. Yamamoto, 
``Maximum principle for the time-fractional PDEs", 
in {\it Handbook of Fractional Calculus with Applications},  Vol. 2, pp. 299--325, 
De Gruyter, Berlin, 2019. 

\bibitem{N1989}
G. Nguetseng, 
A general convergence result for a functional related to the theory of
homogenization,
SIAM J. Math. Anal. \textbf{20} (1989) 608--629

\bibitem{Po}
I. Podlubny, 
Fractional Differential Equations,
Academic Press, San Diego, CA, USA, 1999.

%\bibitem{RGSK05a}
%P. Rupnowski, M. Gentz, J. K. Sutter, and M. Kumosa,
%An evaluation of elastic properties and coefficients of thermal expansion of graphite fibres from macroscopic composite input data,
%Proc. R. Soc. A  \textbf{461}, 347--369 (2005)
%
%\bibitem{RGSK05b}
%P. Rupnowski, M. Gentz, J. K. Sutter, and M. Kumosa,
%An evaluation of the elastic properties and thermal expansion coefficients of medium and high modulus graphite fibers,
%Composites: Part A \textbf{36}, 327--338 (2005)

\bibitem{SY2011}
K. Sakamoto and M. Yamamoto, 
Inverse source problem with a final overdetermination for a fractional diffusion equation, 
Math. Control Relat. Fields, \textbf{1} (2011) 509--518.
	
\bibitem{S1986}
J. Simon,
Compact sets in the space Lp(0,T;B),
Ann. Mat. Pura Appl., IV. Ser. \textbf{146} (1987) 65--96.

\bibitem{T1978}
L. Tartar, 
Quelques remarques sur l’homog\'en\'eisation,
in ``Functional Analysis and Numerical Analysis”, 
Proc. of the Japan-France Seminar 1976, ed. H.Fujita,
Japan Society for the Promotion
of Sciences (1978) 468--482

\bibitem{Y2021}
M. Yamamoto,
Uniqueness in determining fractional orders of derivatives and initial values,
Inverse Problems \textbf{37} (2021) no. 9, 095006, 34 pp.

\end{thebibliography}
\end{document}